\newtheorem{theorem}{Theorem}[section]
\newtheorem{proposition}[theorem]{Proposition}
\newtheorem{lemma}[theorem]{Lemma}
\newtheorem{corollary}[theorem]{Corollary}
\theoremstyle{definition}
\newtheorem{definition}[theorem]{Definition}
\newtheorem*{remark}{Remark}
\newcommand{\mb}[1]{\mathbf{#1}}
\newcommand{\mc}[1]{\mathcal{#1}}
\newcommand{\mcr}[1]{\mathscr{#1}}
\newcommand{\mP}{\mathcal{P}}
\newcommand{\R}{\mathbb{R}}
\newcommand{\Z}{\mathbb{Z}}
\renewcommand{\P}{\mathbb{P}}
\newcommand{\ip}[2]{\left< #1, #2 \right>}
\newcommand{\bd}{\partial}
\newcommand{\1}[1]{\mathbf{1}\left \{ #1 \right \}}
\newcommand{\Span}{\operatorname{Span}}
\newcommand{\supp}{\mathbf{\operatorname{supp}}}
\newcommand{\zero}{\mathbf{\operatorname{zero}}}
\newcommand{\Vol}{\operatorname{Vol}}
\title{On the Geometry of the Last Passage Percolation Problem}
\author{Tom Alberts \\
\texttt{alberts@math.utah.edu} \\
University of Utah
\and
Eric Cator \\
\texttt{e.cator@science.ru.nl} \\
Radboud University
}
\date{\today}
\begin{document}

\maketitle

\begin{abstract}
We analyze the geometrical structure of the passage times in the last passage percolation model. Viewing the passage time as a piecewise linear function of the weights we determine the domains of the various pieces, which are the subsets of the weight space that make a given path the longest one. We focus on the case when all weights are assumed to be positive, and as a result each domain is a pointed polyhedral cone. We determine the extreme rays, facets, and two-dimensional faces of each cone, and also review a well-known simplicial decomposition of the maximal cones via the so-called order cone. All geometric properties are derived using arguments phrased in terms of the last passage model itself. Our motivation is to understand path probabilities of the extremal corner paths on boxes in $\Z^2$, but all of our arguments apply to general, finite partially ordered sets.
\end{abstract}

\section{Introduction}

Last passage percolation is a well-studied model in probability theory that is simple to state but notoriously difficult to analyze. In recent years it has been shown to be related to many seemingly unrelated things: longest increasing subsequences in random permutations, eigenvalues of random matrices, long-time asymptotics of solutions to stochastic partial differential equations, and much more. All of these problems are of great interest due to the asymptotic behavior of various related statistics, neither of which are predicted by the classical strong law of large numbers or central limit theorem. The last passage model has been a particularly fertile ground for exploring this new frontier of probability theory due to its rich \textit{solvability} structure. For certain choices of the random inputs the last passage model can be analyzed exactly, through various connections with representation theory of the symmetric group and rings of symmetric polynomials. 

We briefly recall the setup of the last passage percolation model on $\Z^2$. Consider the box of integer points in $\Z^2$ with lower left corner at $(1,1)$ and upper right corner at $(m,n)$, where $m, n > 1$. At each of the $m \cdot n$ integer points $(i,j)$ we place a random variable $\omega(i,j)$ (a weight). The variables are typically assumed to be independent and identically distributed (iid) across points, and in this paper we will assume that they are always positive. We then consider the set $\mP(m,n)$ of \textit{up-right} paths from $(1,1)$ to $(m,n)$, an up-right path being one whose steps are always either $(1,0)$ or $(0,1)$. To each $\gamma \in \mP(m,n)$ we assign a random \textit{length} $\ell(\gamma)$ that is the sum of the $\omega$ along the path, i.e.
\[
\ell(\gamma) = \sum_{(i,j) \in \gamma} \omega(i,j).
\] 
In a lot of the literature on last passage percolation, either the start weight or the end weight of the path is left out in the length, so that concatenating paths is easier. It turns out that in our description, it is more convenient to consider all weights. Last passage percolation studies the maximal length over all paths, also known as the \textit{passage time}:
\begin{align*}
G(m,n) := \max_{\gamma \in \mP(m,n)} \ell(\gamma) = \max_{\gamma \in \mP(m,n)} \sum_{(i,j) \in \gamma} \omega(i,j).
\end{align*}
The passage time $G(m,n)$ is itself a random variable but its statistical distribution (or law) is very complicated. For any \textit{fixed} path $\gamma$, the law of the length $\ell(\gamma)$ is well understood by the Strong Law of Large Numbers and the Central Limit Theorem. The maximum length, however, is determined by the joint law which describes the statistics of the entire collection of random lengths, and the complicating feature is that there is a very strong \textit{correlation} between these different lengths. Whenever two paths share common vertices the random numbers at those vertices both contribute to their lengths, and so knowing the length of one path gives information about the length of the other. The more two paths intersect, the greater the correlation between their random lengths, and since there are $(m+n)!/(m!n!)$ paths but only $m \cdot n$ vertices the correlation effects are significant. 

Remarkably though, these correlation effects can be overcome when the choice of the input weights is assumed to be iid across vertices $(i,j)$ and coming from either the Bernoulli, geometric or exponential distribution. In these cases \textit{exact} formulas can be computed for the distribution function of $G(m,n)$. The formulas are somewhat complicated, however, and typically involve determinants of linear operators on the sequence space $\ell^2$, with the operator determined by certain families of orthogonal polynomials. Nonetheless, the formulas are somewhat explicit and tractable enough to perform asymptotic analysis as $m, n \to \infty$. Three very important and well known such asymptotic results are the following: 
\begin{itemize}
\item the \textit{limit shape} (the almost sure, non-random limit of $G(\lfloor nx \rfloor, \lfloor ny \rfloor)/n$ as $n \to \infty$, as a function of $x$ and $y$, whose existence follows from Kingman's subadditive ergodic theorem),
\item the magnitude and distribution of the fluctuations of the passage time $G(\lfloor n x \rfloor, \lfloor ny \rfloor)$ as $n \to \infty$ (the growth of the fluctuations being $n^{1/3}$ and the convergence of the centered and appropriately normalized passage time to the Tracy-Widom law),
\item and the magnitude of the transversal fluctuations of the maximal path away from the diagonal (the maximal path from $(1,1)$ to $(n,n)$ is thought to go distance $n^{2/3}$ away from the main diagonal, and is known to do so in certain solvable models).
\end{itemize}
The limit shape results are originally due to exact bijections between LPP and the TASEP process \cite{rost:particle_process, aldous_diaconis:particle_process, Timo:increasing_points}, or for stationary models that exist for certain special weight distributions \cite{OCY:burke, CG:Hammersley, BCS:cube_root, Timo:polymer}. More recent work \cite{GRS:cocycles_LPP, GRS:cocycles_CGM} provides variational formulas for the limit shape for very general weight distributions in terms of infinite dimensional objects called cocycles, although obtaining explicit results for these formulas is generally difficult. Exact Tracy-Widom limits for fluctuations are originally based on connections with generalized permutations and the Robinson-Schensted-Knuth algorithm \cite{johansson:generalized_perm}, often based on ideas from random matrix theory (see also \cite{PS:exact_solution_bernoulli, GeoOrt:bernoulli} in the Bernoulli case). In some cases these results have been re-understood through different means \cite{johansson:multi_dim_MC}, but in general all methods to date require a special choice of random input (see also \cite{Corwin:macdonald_review, Corwin:exactly_solving} for further references).

Nonetheless, it is widely believed that there is a certain \textit{universality} aspect to the last passage model. This specifically refers to the distribution of the fluctuations of the passage time $G(\lfloor nx \rfloor, \lfloor ny \rfloor)$ as $n \to \infty$, which is believed to be the same Tracy-Widom law for a wide class of random inputs, not just the special cases mentioned above. This is analogous to the Central Limit Theorem for sums of iid random variables, where the fluctuations of the sum follow the Gaussian distribution for a very broad class of input variables. While universality in the Central Limit Theorem is now understood via many different techniques and proofs, less progress has been made for universality results of the last passage model.

This paper explores a possible method for studying various aspects of the last passage model using tools from combinatorics and geometry. The main idea is to embed the model into a suitable high-dimensional space, determined by the random input weights, and in this space study the geometry of the last passage problem. The basic setup is relatively simple. For any fixed path $\gamma$ its length $\ell(\gamma)$ is clearly a linear function of the weights $\omega$, and therefore the passage time $G(m,n)$ is piecewise linear. The main purpose of this article is to determine the geometry of the domains of the pieces, each one of which corresponds to a different path. The main strength of this approach is that it is purely geometric, with no probabilistic input at all until a measure is put on the space of weights. This flexibility allows one to study many different types of random inputs with the same underlying geometric framework, and it is our hope that it will allow for a new conceptual framework for the last passage problem while at the same time shedding new light onto previously solvable models. Our interest in this approach was primarily driven by one simple question: among all paths in $\mP(n,n)$ (take $m = n$ for simplicity), which one is the most likely to be the maximizer? Even in the exactly solvable cases this does not seem to be an easy question to answer, as we explain later in Section \ref{sec:exp_weights}. While much attention has been paid to the paths with transversal fluctuations $n^{2/3}$, in particular the recent work \cite{DOV:directed_landscape} proves the existence of scaling limit for these objects in terms of the so-called Airy sheet \cite{CQR:KPZ_fixed_point}, less attention has been paid to the more extreme paths. We are quite confident that the most likely maximal path is the extreme one that goes straight up from $(1,1)$ to $(1,n)$ and then straight right from $(1,n)$ to $(n,n)$ (or its symmetric version that goes right and then up). We do not have a proof but the intuition is straightforward: the weights that are picked up by the extreme path are shared by a relatively small number of other paths, and therefore the extremal path should have a much larger portion of the environment space in which it is longest. For example, the extremal path gets the weight at $(1,n)$ entirely to itself. In contrast, the paths going through the interior share the weights they pick up with \textit{many} other paths, meaning each individual path has a hard time distinguishing itself as the longest. In fact, we expect that the probability of the extremal path being longest is substantially larger (in $n$) than the probability of the middle path being longest (the middle path being the one that alternates between up and right steps). This heuristic fits with the expectation that the transversal fluctuations are larger than the $n^{1/2}$ magnitude obtained by the uniform measure on paths. In fact some sort of behavior of this type seems necessary to obtain superdiffusive fluctuations, although on its own it does not explain why the magnitude of the fluctuations should be precisely $n^{2/3}$. We expect that the $n^{2/3}$ corresponds to the region where the low probabilities for the ``middle paths'' balances out the fact that the bulk of the paths are in the middle. In other words, even though we believe that the extremal path (which has transversal fluctuation of order $n$) is the mode of the path distribution, a typical sample from the path distribution has transversal fluctuations of much smaller magnitude because there are so many more paths there.

In the last section of the paper we discuss some other results that we believe follow from this intuition, such as a negative correlation result between the location of the maximizer and the path length itself. The present paper comes from a desire to solidify our intuition by understanding more about the structure of the parts of environment space that makes a given path the longest. We also hope it will help to separate out how much of the expected universal behavior is due to the geometry of the last passage time function and how much is due to the particular probability distribution on the weights. One advantage of our framework is that it extends beyond the traditional study of LPP on $\Z^2$. In fact all that is required is a notion of directedness, which allows us to carry out the analysis on general finite posets.  

\subsection*{General Setup and Main Results}

Although we are largely motivated by the last passage problem on $\Z^2$, our approach assumes nothing other than the paths being \textit{directed}. On $\Z^2$ this is forced by the assumption that paths are up-right (and hence not allowed to go backwards), but in fullest generality we can force a direction by studying the problem on an arbitrary partially ordered set (poset). This has the advantage of allowing for different correlation structures among path lengths, which is determined by the structure of the underlying poset as follows.

Let $(P, \leq)$ be a finite poset. The general last passage problem will be considered on $P$, which we often think of in terms of its Hasse diagram, and so we will commonly refer to the elements of $P$ as vertices. We will assume throughout that $P$ is connected, meaning that its Hasse diagram is connected as a graph, since otherwise we may consider the problem individually on the different connected components. The cover relations of the poset will be denoted by $\lessdot$, where we recall that for $\mb{v}, \mb{w} \in P$, $\mb{v} \lessdot \mb{w}$ means that $\mb{v} < \mb{w}$ and there is no $\mb{u} \in P$ such that $\mb{v} < \mb{u} < \mb{w}$. So there is an edge in the Hasse diagram of $P$ from $\mb{v}$ to $\mb{u}$ iff $\mb{v}\lessdot \mb{u}$. On a general poset the paths of the last passage problem are the \textit{maximal chains} of $P$, the set of which we denote by $\Pi_P$. Recall that a maximal chain is an ordered subset $\{\mb{v}_1, \ldots, \mb{v}_n \}$ of $P$ such that $\mb{v}_1 \lessdot \mb{v}_2 \lessdot \ldots \lessdot \mb{v}_n$ and there are no elements $\mb{u}$ or $\mb{w}$ such that $\mb{u} \lessdot \mb{v}_1$ or $\mb{v}_n \lessdot \mb{w}$. Intuitively we see that this corresponds to all nearest-neighbor paths in the Hasse diagram of $P$ that are as ``long'' as possible.

For the (positive weight) last passage problem on $P$ we place a weight $\omega(\mb{v})\in\R_+ = [0, \infty)$ on each element $\mb{v} \in P$. The vector $\omega \in \R_+^P$ is collectively referred to as the weight, and the length of each element of $\pi \in \Pi_P$ is the the sum of the weights along the path:
\[
\ell(\pi) := \sum_{\mb{v} \in \pi} \omega(\mb{v}).
\]
Note that we can naturally associate each path $\pi \in \Pi_P$ to a vector in $\R_+^P$ (which we also call $\pi$) via $\pi(\mb{v}) = \1{\mb{v} \in \pi}$, where $\mb{1}$ is the indicator function. Via this association we have that the length is simply the standard inner product between the path and the weight vector, i.e.
\[
\ell(\pi) = \langle \omega, \pi \rangle.
\]
The \textit{passage time} of the poset $P$, under the weight vector $\omega$, is the largest length of all possible paths, i.e.
\[
G_P = G_P(\omega) := \max_{\pi \in \Pi_P} \langle \omega, \pi \rangle.
\]
More generally we may consider the vector of passage times determined by the weight vector $\omega$, which encodes the length of the longest path up to each given vertex and is defined as
\[
G_P(\mb{v}) = G_P(\mb{v}; \omega) := \max_{\pi \in \Pi_P(\mb{v})} \langle \omega, \pi \rangle,
\]
where $\Pi_P(\mb{v})$ is the set of all maximal chains in the subposet of elements below $\mb{v}$ (the so-called lower set of $\mb{v}$, see below for a definition), extended to $\R^P$ by adding zeros. Then clearly
\[
G_P = \max_{\mb{v} \in P} G_P(\mb{v}).
\]
The collection of passage times $G_P(\mb{v})$ can also be built up from the weight vector $\omega$ via the recursion
\begin{align}\label{eqn:recursion}
 G_P(\mb{v}) = \omega(\mb{v}) + \max_{\mb{u} : \mb{u} \lessdot \mb{v}} G_P(\mb{u})
\end{align}
with the ``initial condition'' $G_P(\mb{v}) = \omega(\mb{v})$ if $\mb{v}$ is a minimal element of $P$. Conversely, given the vector of passage times $G_P(\mb{v})$ this recursion can be inverted to solve for the corresponding weight vector $\omega$ via
\[
 \omega(\mb{v}) = G_P(\mb{v}) - \max_{\mb{u} : \mb{u} \lessdot \mb{v}} G_P(\mb{u}),
\]
again with $G_P(\mb{v}) = \omega(\mb{v})$ for $\mb{v}$ minimal. Regardless of how $G_P$ is constructed, for $P$ fixed and $\omega$ allowed to vary, this definition implies that $G_P$ is a piecewise linear function of $\omega$, and the main purpose of this article is to determine the regions on which the function is equal to each of the various linear maps that define it. Since in this case the maps are defined by the paths $\pi$ there is a natural region in $\R_+^P$ associated to each path: the set of weight vectors $\omega$ that give path $\pi$ the longest weight. More precisely, this is the set
\begin{align}\label{defn:max_path_set}
\mc{C}(\pi) := \left \{ \omega \in \R_+^P : G_P(\omega) = \langle \omega, \pi \rangle \right \}
		  = \left \{ \omega \in \R_+^P : \langle \omega, \pi \rangle \geq \langle \omega, \pi' \rangle \textrm{ for all } \pi' \in \Pi_P  \right \}.
\end{align}
From this definition and especially the second equality we immediately see that each set $\mc{C}(\pi)$ is a \textit{polyhedral cone}, namely a finite intersection of half-spaces of $\R^P$. The inequalities defining the half-spaces are those of the form $\langle \omega, \pi - \pi' \rangle \geq 0$, with $\pi$ and $\pi'$ regarded as vectors in $\R_+^P$, but also those inequalities implicitly given by the condition that the cone is a subset of $\R_+^P$. The latter is equivalent to saying that $\omega(\mb{v}) \geq 0$ for all $\mb{v} \in P$, which simply increases the number of half-spaces that define the cone.

As with all polyhedral cones the sets $\mc{C}(\pi)$ are both convex and invariant under positive scaling, since the half-spaces that define them are also and these properties are preserved under intersection. Both properties also follow from their interpretation via the last passage model, since if two weight vectors make the same path maximal then clearly so does their sum and any positive scalar multiple.

Beyond the fact that the maximal sets are polyhedral cones, a more detailed description of the structure of the sets is required to perform any meaningful analysis. There are two common descriptions of a polyhedral cone: via the set of half-spaces that bound it (the \textit{H-decomposition}), or via the extreme rays that span it (the \textit{V-decomposition}). For a polyhedral cone there are at most finitely many half-spaces and extreme rays that define it, and in this article we will determine both for each given path $\pi$. It turns out that both descriptions have a very beautiful structure, and moreover can be determined solely by working with their description in terms of the last passage model. The V-decomposition is already known in \cite{stanley:two_poset_polytopes} but our argument is different in that it is phrased in terms of the last passage model. To the best of our knowledge our determination of the H-decomposition is new, and we regard it as the most significant of our results. In both cases the idea is that the geometry of each maximal set $\mc{C}(\pi)$ is naturally encoded in the poset $P$, and our arguments are based on a comparison of $\pi$ to the other paths in $\Pi_P$. In an intuitive sense we are analyzing the ability of the other paths to compete with $\pi$ to be the maximal one. We will prove the following two main theorems on the $H$ and $V$ decompositions of the sets $\mc{C}(\pi)$:

\begin{theorem}[$H$-decomposition of maximal sets]\label{thm:faces}
For each path $\pi \in \Pi_P$ the \textbf{minimal} set of inequalities that define the cone $\mc{C}(\pi)$ are those of the form:
\begin{enumerate}[i)]
\item $\omega(\mb{v}) \geq 0$ for $\mb{v} \in P \backslash \pi$,
\item $\omega(\mb{v}) \geq 0$ for $\mb{v} \in \pi$ but not a corner of $\pi$,
\item $\langle \omega, \pi-\pi' \rangle \geq 0$ for paths $\pi' \in \Pi_P \backslash \{ \pi \}$, whose disorder graph with $\pi$ is connected.
\end{enumerate}
\end{theorem}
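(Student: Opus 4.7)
The plan is to attack Theorem~\ref{thm:faces} in two symmetric movements. First, I would show that every defining half-space of $\mathcal{C}(\pi)$ that is \emph{not} on the list is a non-negative combination of those on the list (so can be dropped). Second, I would show that each listed inequality is non-redundant, by producing a weight vector in the relative interior of the corresponding facet of $\mathcal{C}(\pi)$.

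For the redundancy step, the inequalities not on the list fall into two families. The first is the positivity constraint $\omega(\mathbf{v}) \geq 0$ at a corner $\mathbf{v} \in \pi$. At any such corner there is a local alternative maximal chain $\pi'$ obtained by rerouting through a single vertex $\mathbf{v}' \in P\setminus\pi$, so $\pi - \pi'$ equals the indicator of $\{\mathbf{v}\}$ minus the indicator of $\{\mathbf{v}'\}$; the singleton disorder graph is trivially connected, so combining the type (iii) inequality for $\pi'$ with the type (i) inequality $\omega(\mathbf{v}') \geq 0$ yields $\omega(\mathbf{v}) \geq 0$. The second family is $\langle \omega, \pi-\pi'\rangle \geq 0$ when the disorder graph of $\pi$ and $\pi'$ splits into components $C_1,\ldots,C_k$ with $k \geq 2$. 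Here I would manufacture intermediate maximal chains $\pi_1,\ldots,\pi_k$, each of which agrees with $\pi'$ on one component $C_i$ and with $\pi$ everywhere else; the telescoping identity $\pi - \pi' = \sum_i (\pi - \pi_i)$ then exhibits the given inequality as a positive combination of type (iii) inequalities whose disorder graphs (namely the individual $C_i$) are connected. The key technical point is verifying that each $\pi_i$ really is a maximal chain in $P$, which should follow from the fact that distinct disorder components are order-comparable only through vertices of $\pi \cap \pi'$.

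For the facet step, I would exhibit witnesses. For (i) and (ii), take $\omega = \mathbf{1}_\pi + \epsilon\,\mathbf{1}_{P\setminus\pi}$ and then set $\omega(\mathbf{v})=0$; for $\epsilon>0$ small, every other path $\pi''$ satisfies $\langle \omega, \pi - \pi''\rangle > 0$ strictly (since $\pi \not\subseteq \pi''$), and every other positivity constraint is strictly satisfied. In case (ii) the non-corner hypothesis is what prevents a competing path from saturating $\langle \omega,\pi-\pi''\rangle = 0$ simultaneously, since no local swap is available at $\mathbf{v}$. For (iii), given $\pi'$ with connected disorder graph, I would build $\omega$ that makes exactly $\pi$ and $\pi'$ tied as maximizers: put large weights on $\pi \cap \pi'$, modest strictly positive weights elsewhere, and then fine-tune the weights on the disorder region so that $\langle \omega,\pi\rangle = \langle \omega,\pi'\rangle$. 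One must then verify that no third path $\pi''$ also saturates, and this is where connectedness is crucial.

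The main obstacle is precisely this last verification: ruling out hidden tie-breakers among hybrid paths $\pi''$ that flip only part of the disorder region. Intuitively, any such partial-flip path would correspond to cutting the disorder graph, which is impossible when it is connected; making this into a quantitative perturbation argument — choosing the balancing weights on the disorder region so that all hybrid competitors fall strictly short — is the technical heart of the proof and where I expect the bulk of the work to lie. Once this is handled, the redundancy direction and the facet witnesses for (i) and (ii) should be comparatively routine.
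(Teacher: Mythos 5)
Your high-level plan tracks the paper's proof fairly closely: split into showing redundancy of the unlisted inequalities and necessity of the listed ones, handle the corner positivity case by deriving $\omega(\mathbf{v}) \geq 0$ from the remaining constraints, split the disconnected-disorder-graph inequality into a sum of connected pieces, and produce explicit witnesses for necessity. The only genuinely different stylistic choice is that you produce a relative-interior point of each facet, whereas the paper produces a point in $\mathcal{C}(\pi;\eta)\setminus\mathcal{C}(\pi)$, i.e.\ one that violates the target inequality while satisfying the rest; both are standard and equivalent ways to certify a facet.

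However, there is a gap you have correctly located but not closed, and one smaller error.

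The smaller error is in your handling of corners. You assert that at a corner $\mathbf{v}$ one can reroute through a \emph{single} vertex $\mathbf{v}'$ so that $\pi - \pi'$ is the difference of two singleton indicators. This is true on $[1,m]\times[1,n]$ but false on general posets: the definition only guarantees the existence of a maximal chain $\pi'$ with $\pi\setminus\{\mathbf{v}\}\subset\pi'$ and $\mathbf{v}\notin\pi'$, so the set $S = \pi'\setminus\pi$ may contain several vertices. The conclusion still holds (one gets $\omega(\mathbf{v}) \geq \sum_{\mathbf{u}\in S}\omega(\mathbf{u}) \geq 0$ from the remaining constraints, and the disorder graph $\Delta(\pi,\pi')$ is in this case a star and hence connected), but as stated your argument only covers the $\Z^2$ case.

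The real gap is case~(iii) necessity, which you explicitly flag as ``the technical heart'' and leave unresolved. Your sketch — large weights on $\pi\cap\pi'$, modest positive weights elsewhere, ``fine-tune'' the disorder region to tie $\pi$ with $\pi'$, then rule out hybrid third paths by connectedness — identifies the right difficulty but does not produce a construction, and it is not at all obvious that naive fine-tuning works. The paper's construction is quite specific and exploits the poset structure directly: it puts weight $1$ on every vertex of $\pi'$, weight $0$ off $\pi\cup\pi'$, and on $\pi\setminus\pi' = \{\mathbf{u}_0\lessdot\cdots\lessdot\mathbf{u}_\ell\}$ it sets
\[
\omega(\mathbf{u}_0) = |\pi'\setminus\pi| - |(\pi'\setminus\pi)\cap U(\mathbf{u}_0)| - 1, \qquad
\omega(\mathbf{u}_i) = |(\pi'\setminus\pi)\cap U(\mathbf{u}_{i-1})| - |(\pi'\setminus\pi)\cap U(\mathbf{u}_i)|,
\]
for $i\geq 1$. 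These weights are chosen precisely so that the remaining weight available after any switch point matches on the two paths in a way that makes switching never profitable, and the connectedness of the disorder graph (via the resulting saturation of $\pi\setminus\pi'$, Proposition~3.1 parts (ii) and (v)) is used repeatedly in the verification. Without something like this explicit telescoping weight assignment — together with the integrality trick that forces a gap of at least $1$ between first and second place — the ``no third path ties'' claim remains unproven. Producing this construction and verifying it is the majority of the proof, so your proposal should be counted as a correct outline with a substantial missing core rather than a complete argument.
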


The second and third conditions require further definition, which we give next, but are motivated by their meaning on $\Z^2$, which for the second condition is geometrically intuitive and in the third condition means that $\pi$ and $\pi'$ form at most a single loop. See the remark below for more. On general posets they mean the following:

\begin{definition}
A vertex $\mb{v} \in P$ is a corner of a path $\pi \in \Pi_P$ if $\mb{v} \in \pi$ and there exists another path $\pi' \in \Pi_P$ such that $\pi\setminus{\mb{v}}\subset \pi'$ and $\mb{v}\not\in \pi'$.
\end{definition}

The notion of a corner of a path can be easily visualized in the Hasse diagram of the poset. 

\begin{definition}\label{defn:disorder_graph}
Fix $\pi, \pi' \in \Pi_P$. The \emph{disorder graph} of $\pi$ and $\pi'$, denoted by $\Delta(\pi,\pi')$, has as its vertex set the symmetric difference $\pi\triangle\pi'$ between $\pi$ and $\pi'$ seen as subsets of the poset (so $(\pi \backslash \pi') \cup (\pi' \backslash \pi$)). There is an edge between $\mb{u}$ and $\mb{v}$ precisely when $\mb{u}$ and $\mb{v}$ are out of order.
\end{definition}

An important property of $\Delta(\pi,\pi')$ is that it constitutes a bipartite graph, with one part consisting of the vertices belonging to $\pi$ and the other part consisting of the vertices belonging to $\pi'$. The bipartiteness follows because two vertices belonging to the same path are always in order.

Note that the statement of Theorem \ref{thm:faces} is that this is the minimal set of inequalities needed to define the cone, so that removing any one of them would lead to a larger set than $\mc{C}(\pi)$. These inequalities define the \textit{facets} of the cone, the co-dimension one boundary sets of $\mc{C}(\pi)$. Note that \eqref{defn:max_path_set} already defines $\mc{C}(\pi)$ via these various inequalities, but what Theorem \ref{thm:faces} amounts to showing is that many of these facets are \textit{redundant}. Reducing the inequalities to only the irredundant ones allows for a fuller analysis of the cone, and is usually required for computational algorithms.

\begin{remark}
For posets of the form $[1,m] \times [1,n]$ in $\Z^2$ the condition that the disorder graph $\Delta(\pi, \pi')$ is connected is equivalent to saying that $\pi$ and $\pi'$ form a single loop. That is, $\pi$ and $\pi'$ may start out the same, diverge for a while, and then recombine with each other, but after recombining cannot diverge again. Diverging more than once would mean that there are multiple loops between $\pi$ and $\pi'$, which is equivalent to saying that $\pi - \pi'$ can be written as the sum of the individual loops. Since each individual loop is already a face of $\mc{C}(\pi)$ the sum is redundant. It is also easy to see that $\pi$ and $\pi'$ forming multiple loops is equivalent to the disorder graph being disconnected, since between any two consecutive loops there is a subpath in $\pi \cap \pi'$ that connects the loops together. This subpath prevents the two loops from being connected in $\Delta(\pi, \pi')$.
\end{remark}

We prove Theorem \ref{thm:faces} for the H-representation in Section \ref{sec:faces}. In Section \ref{sec:two_dim_faces} we also describe a related object called the \textit{order graph}, which forms connections based on ordering relations between the supported vertices of two extreme rays of $\mc{C}(\pi)$. We use the order graph to determine when two extreme rays of $\mc{C}(\pi)$ form a two-dimensional boundary face of the cone, see Theorem \ref{thm:edges}.

To describe the extreme rays of the cones requires the notion of an \textit{antichain} of the poset $P$ and a particular geometric embedding of it, which we define next.

\begin{definition}
An \textit{antichain} of the poset $P$ is a subset of $P$ such that no two elements are in order. We will naturally embed an antichain $A \subset P$ into an element $a \in \R_+^P$ via
$a(\mb{v}) = \1{v \in A}$.
\end{definition}

In particular, an antichain can contain at most one vertex from a given path in $\Pi_P$, since by definition the elements along a path are in complete order with each other. This leads to the following theorem.

\begin{theorem}[$V$-decomposition of maximal sets]\label{thm:extreme_rays}
For each path $\pi \in \Pi_P$ the extreme rays of the cone $\mc{C}(\pi)$ are precisely the geometric embeddings of the antichains which intersect $\pi$ exactly once.
\end{theorem}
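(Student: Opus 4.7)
The plan is to prove both inclusions: each geometric embedding of an antichain of the stated form is an extreme ray of $\mc{C}(\pi)$, and conversely every $\omega\in\mc{C}(\pi)$ is a nonnegative combination of such embeddings, which forces the extreme rays to be exactly these.

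For the first direction, fix an antichain $A$ with $A\cap\pi=\{\mb{v}_0\}$ and write $a_A\in\R_+^P$ for its indicator. Since a chain meets an antichain in at most one vertex, $\langle a_A,\pi'\rangle=|A\cap\pi'|\leq 1=\langle a_A,\pi\rangle$ for every $\pi'\in\Pi_P$, so $a_A\in\mc{C}(\pi)$. To see that $a_A$ is extreme, I would decompose $a_A=\omega_1+\omega_2$ with $\omega_i\in\mc{C}(\pi)$; nonnegativity forces each $\omega_i$ to be supported on $A$. For any $\mb{v}'\in A$, pick a maximal chain $\pi'\in\Pi_P$ through $\mb{v}'$ (every vertex of a finite poset lies in one); since $A$ is an antichain, $\pi'\cap A=\{\mb{v}'\}$, and the defining inequality for $\mc{C}(\pi)$ yields $\omega_i(\mb{v}_0)\geq\omega_i(\mb{v}')$. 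Summing over $i=1,2$ gives $1\geq 1$, so both inequalities are tight and $\omega_i=\omega_i(\mb{v}_0)\,a_A$.

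The converse is the main step and the principal technical obstacle. Writing $\pi=\{\mb{v}_1\lessdot\cdots\lessdot\mb{v}_n\}$, $s_i=\sum_{j\leq i}\omega(\mb{v}_j)$, and $h(\mb{w}):=G_P(\mb{w};\omega)$, the key preliminary is the identity $h(\mb{v}_i)=s_i$. The bound $\geq$ is immediate from the $\pi$-prefix; for $\leq$, any maximal chain $\tau$ in the subposet below $\mb{v}_i$ can be concatenated with $\mb{v}_{i+1}\lessdot\cdots\lessdot\mb{v}_n$ to form a chain in $\Pi_P$, so $\omega\in\mc{C}(\pi)$ forces $\langle\omega,\tau\rangle\leq s_i$. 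With this in hand, for $c\in(0,s_n]$ define
\[
A_c:=\{\mb{w}\in P:h(\mb{w})-\omega(\mb{w})<c\leq h(\mb{w})\}.
\]
I would then verify that $A_c$ is an antichain (if $\mb{u}<\mb{w}$ with both in $A_c$, any chain from $\mb{u}$ to $\mb{w}$ forces $h(\mb{w})-\omega(\mb{w})\geq h(\mb{u})\geq c$, a contradiction) and that $A_c\cap\pi=\{\mb{v}_i\}$ for the unique $i$ with $s_{i-1}<c\leq s_i$.

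To conclude, for each $\mb{w}\in P$ the Lebesgue measure of $\{c\in(0,s_n]:\mb{w}\in A_c\}$ equals $\omega(\mb{w})$, so $\omega=\int_0^{s_n}a_{A_c}\,dc$ componentwise. Since $A_c$ takes only finitely many distinct values as $c$ varies, this collapses to a finite nonnegative combination of antichain indicators of the required form, realizing $\mc{C}(\pi)$ as the cone they generate. Combined with the first direction, the extreme rays are exactly these embeddings. The technical heart is the identity $h(\mb{v}_i)=s_i$ together with the antichain property of $A_c$; the rest is a routine check from the definitions.
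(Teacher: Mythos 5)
Your proof is correct, but it proceeds by a genuinely different route from the paper's. The paper characterizes extreme rays via the ``perturbation space'' $\mc{D}_\pi(\omega)$: it first shows that each antichain indicator has only a one-dimensional perturbation space, then shows that any $\omega$ whose support is not an antichain, or which is non-constant on its (antichain) support, has extra perturbation directions of the form $\frac{\epsilon}{2}(1_{\bd L}-1_{\bd U})$ or $\frac{\epsilon}{3}(1_{\supp(\omega)\setminus A}-1_A)$. You instead prove extremality directly from the definition (decompose $a_A = \omega_1 + \omega_2$; nonnegativity confines $\supp(\omega_i)$ to $A$; for each $\mb v'\in A$ choose a maximal chain through it, apply the defining inequality, and sum over $i$ to force equality), and you handle the converse by exhibiting an explicit conic decomposition of an arbitrary $\omega\in\mc C(\pi)$: the ``water-filling'' decomposition $\omega=\int_0^{s_n}a_{A_c}\,dc$ with $A_c=\{\mb w: h(\mb w)-\omega(\mb w)<c\leq h(\mb w)\}$, where $h=G_P(\cdot;\omega)$. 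Your key identity $h(\mb v_i)=s_i$ and the containment $h(\mb w)\leq s_n$ both use $\omega\in\mc C(\pi)$ correctly (including the degenerate case $s_n=0$, which forces $\omega=0$). Conceptually, your $A_c$ decomposition is the transfer map from the order cone to $\R_+^P$ that the paper deploys later in Section~\ref{sec:simplicial_decomp} for the simplicial decomposition of $\mc C(\pi)$; you are in effect deriving Theorem~\ref{thm:extreme_rays} as a corollary of that machinery, while the paper postpones it and instead introduces the perturbation-space technology here because it is reused verbatim for the two-dimensional faces in Section~\ref{sec:two_dim_faces}. Your approach buys an explicit constructive decomposition of every $\omega\in\mc C(\pi)$ as a nonnegative combination of extreme rays, which is a strictly stronger output than mere identification of the extreme rays; the paper's approach buys a local technique (perturbation spaces) that transfers directly to other face-dimension questions. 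Both arguments remain phrased in terms of the last passage model, which was the paper's stated goal.
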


In other words, a vector in $\R_+^P$ is an extreme ray of $\mc{C}(\pi)$ if and only if there is one non-zero entry along the path $\pi$, and all other non-zero entries have the same value and are out of order with each other in the poset. We will canonically take the non-zero entry to be $1$, although by scaling invariance it could clearly be any positive value. Commonly we will use the notation:

\begin{definition}
For each fixed $\pi \in \Pi_P$ we let $\mcr{ER}(\pi)$ denote the set of extreme rays of the polyhedral cone $\mc{C}(\pi)$.
\end{definition}

The structure of extreme rays is essentially already stated by Stanley \cite{stanley:two_poset_polytopes} through what he calls the \textit{chain polytope}. See also the earlier works referenced within \cite{stanley:two_poset_polytopes}. The chain polytope can be formed by intersecting each maximal cone $\mc{C}(\pi)$ with the unit cube $[0,1]^P$ and then taking the union of what remains over all paths $\pi$. Our description of the extreme rays for each individual cone $\mc{C}(\pi)$ is not a very extensive refinement of Stanley's result, but our proof is different in that it is framed entirely in terms of the last passage model. The antichains turn out to be precisely the directions in which one can perturb the path lengths while keeping the longest path the longest, and this turns out to be the key argument in our proof. This is laid out in Section \ref{sec:extreme_rays}.

The explicit structure of $\mcr{ER}(\pi)$ also allows us to determine its size for certain types of posets, in particular for $[1,m] \times [1,n] \subset \Z^2$. See Theorem \ref{thm:extreme_rays_count}. For all but pathological posets and paths the number of extreme rays in $\mcr{ER}(\pi)$ is much greater than the dimension $|P|$ of the ambient space $\R^P$, meaning that the maximal cones $\mc{C}(\pi)$ are far from simplicial. Nonetheless it is possible to use the extreme rays in $\mcr{ER}(\pi)$ to give an explicit simplicial decomposition of each maximal cone $\mc{C}(\pi)$, without the need to introduce additional rays. 

\begin{theorem}\label{thm:simplicial_decomposition}
For each $\pi \in \Pi_P$ there is a decomposition of $\mc{C}(\pi)$ into disjoint simplicial cones (disjoint up to boundary intersections) such that the extreme rays of each simplicical cone only use elements from $\mcr{ER}(\pi)$.
\end{theorem}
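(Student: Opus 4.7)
The plan is to transport a classical simplicial fan on the order cone of $P$ back to $\R_+^P$ through the nonlinear bijection $\omega \mapsto G_P$. Introduce the order cone $\mc{O}(P) := \{g : P \to \R_+ : \mb{u} \leq \mb{v} \Rightarrow g(\mb{u}) \leq g(\mb{v})\}$, and define $\Phi : \R_+^P \to \mc{O}(P)$ by $\Phi(\omega)(\mb{v}) := G_P(\mb{v}; \omega)$; by the recursion \eqref{eqn:recursion} and its inversion, $\Phi$ is a bijection with explicit inverse $\Phi^{-1}(g)(\mb{v}) = g(\mb{v}) - \max_{\mb{u} \lessdot \mb{v}} g(\mb{u})$ (the max taken as $0$ when $\mb{v}$ is minimal). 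For each linear extension $\tau$ of $P$, writing the vertices in $\tau$-order as $\mb{v}_{\tau(1)}, \ldots, \mb{v}_{\tau(|P|)}$, set
$$ \mc{S}_\tau := \{g \in \mc{O}(P) : g(\mb{v}_{\tau(1)}) \leq g(\mb{v}_{\tau(2)}) \leq \cdots \leq g(\mb{v}_{\tau(|P|)})\}. $$
Standard facts about the order polytope imply that $\{\mc{S}_\tau\}_\tau$ is a simplicial fan covering $\mc{O}(P)$ whose per-piece extreme rays are the indicator vectors $\mathbf{1}_{F_i^\tau}$ of the filters $F_i^\tau := \{\mb{v}_{\tau(i)}, \ldots, \mb{v}_{\tau(|P|)}\}$.

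Next I would transfer this fan back. On $\mc{S}_\tau$ the max in $\Phi^{-1}$ is uniquely achieved by the $\tau$-latest predecessor, so $\Phi^{-1}$ restricts to a linear isomorphism on $\mc{S}_\tau$ and $\Phi^{-1}(\mc{S}_\tau) \subset \R_+^P$ is a simplicial cone. A direct computation gives $\Phi^{-1}(\mathbf{1}_{F_i^\tau}) = \mathbf{1}_{A_i^\tau}$, where $A_i^\tau$ is the antichain of minimal elements of $F_i^\tau$ (the value at $\mb{v}$ equals $1$ precisely when $\mb{v} \in F_i^\tau$ and no element covered by $\mb{v}$ lies in $F_i^\tau$). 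Therefore $\{\Phi^{-1}(\mc{S}_\tau)\}_\tau$ is a simplicial fan covering $\R_+^P$ whose extreme rays are all antichain indicators.

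It then remains to isolate the pieces inside $\mc{C}(\pi)$. Call $\tau$ \emph{$\pi$-compatible} if, for every $\mb{v}_i \in \pi$ with $i \geq 2$, the immediate predecessor $\mb{v}_{i-1}$ appears later in $\tau$ than every other element covered by $\mb{v}_i$, and if the top vertex of $\pi$ is the last element of $\tau$. These are precisely the conditions making the greedy argmax-driven path traced backward from the top of $\tau$ equal $\pi$, and one shows
$$ \mc{C}(\pi) = \bigcup_{\tau \text{ $\pi$-compatible}} \Phi^{-1}(\mc{S}_\tau) $$
with the union disjoint up to boundaries (the forward inclusion is routine; conversely, given $\omega \in \mc{C}(\pi)$ one constructs a $\pi$-compatible $\tau$ by sorting vertices by $g = \Phi(\omega)$ and breaking ties in favor of the $\mb{v}_{i-1}$ and of the top of $\pi$). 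The final point is to check that, for every $\pi$-compatible $\tau$ and every $i$, the antichain $A_i^\tau$ meets $\pi$ in exactly one element, whence $A_i^\tau \in \mcr{ER}(\pi)$ by Theorem \ref{thm:extreme_rays}. Because $F_i^\tau$ is a filter and $\pi \cap F_i^\tau$ is a nonempty chain, it has a unique minimum $\mb{v}_k$; any $\mb{u} \in F_i^\tau \setminus \pi$ with $\mb{u} < \mb{v}_k$ would force, via the filter property, a cover $\mb{u}' \lessdot \mb{v}_k$ with $\mb{u}' \in F_i^\tau$, after which $\pi$-compatibility would place $\mb{v}_{k-1}$ in $F_i^\tau$, contradicting the minimality of $\mb{v}_k$ in $\pi \cap F_i^\tau$. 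Hence $\mb{v}_k$ is minimal in $F_i^\tau$ and is the unique element of $\pi$ in $A_i^\tau$.

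The main obstacle is this last combinatorial step: confirming that every extreme ray of a simplicial piece lying inside $\mc{C}(\pi)$ lies in $\mcr{ER}(\pi)$, which requires a careful simultaneous use of the filter property of $F_i^\tau$, the argmax-based $\pi$-compatibility of $\tau$, and the chain structure of $\pi$. The remainder is a direct transfer of the well-known order-cone decomposition already alluded to in the introduction.
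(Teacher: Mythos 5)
Your proposal is correct and takes essentially the same route as the paper: the paper (Theorems \ref{thm:simplicial_cones} and \ref{thm:maximal_path}) builds the same simplicial fan from maximal chains in $J(P)$ (equivalently, linear extensions of $P$), transports it to $\R_+^P$ via the linear map inverting the passage-time recursion, and associates a path to each chamber by the same greedy backtracking rule. The only difference is bookkeeping — the paper indexes the fan by chains of upper sets rather than directly by linear extensions, and argues the reverse inclusion by appealing to the global partition of $\R_+^P$ rather than by explicitly constructing a $\pi$-compatible $\tau$ from a given $\omega$, but these are cosmetic variations of the same argument.
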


This theorem can be found in Stanley \cite{stanley:two_poset_polytopes, stanley:enum_combin_ii} so we only explain it briefly in Section \ref{sec:simplicial_decomp}. On Young diagrams (which we regard as subposets of $\Z^2$) it is equivalent to using Young tableaux to partition the space into simplices. From this partitioning we obtain the following result:

\begin{corollary}
There exists functions $\Lambda_1, \ldots, \Lambda_{|P|} : \R_+^P \to \R$ such that
\[
G_P(\omega) = \sum_{i=1}^{|P|} \Lambda_i(\omega).
\]
\end{corollary}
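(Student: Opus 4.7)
The plan is to read off the functions $\Lambda_i$ directly from the simplicial decomposition of Theorem \ref{thm:simplicial_decomposition}. Since the cones $\{\mc{C}(\pi) : \pi \in \Pi_P\}$ cover $\R_+^P$ and each one is full-dimensional in $\R^P$, the simplicial decompositions of the individual $\mc{C}(\pi)$'s refine to a simplicial fan covering $\R_+^P$ in which every top-dimensional cell $\sigma$ has exactly $|P|$ extreme rays $r_1,\ldots,r_{|P|}$ drawn from $\mcr{ER}(\pi)$ for the unique $\pi$ with $\sigma\subset \mc{C}(\pi)$. On such a cell, every $\omega\in\sigma$ admits a unique expansion $\omega=\sum_{i=1}^{|P|}c_i(\omega)\,r_i$ with $c_i(\omega)\ge 0$, and since each $r_i$ is the embedding of an antichain meeting $\pi$ exactly once (Theorem \ref{thm:extreme_rays}), $\langle r_i,\pi\rangle=1$ and hence
\[
G_P(\omega)=\langle\omega,\pi\rangle=\sum_{i=1}^{|P|}c_i(\omega).
\]

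To turn these chamber-local coefficients into globally defined functions on $\R_+^P$, I would use the order-cone interpretation supplied by the recursion \eqref{eqn:recursion}. The passage-time vector $\omega\mapsto (G_P(\mb{v};\omega))_{\mb{v}\in P}$ is a continuous, piecewise-linear map of $\R_+^P$ onto the order cone $\{f:P\to\R_+ : f(\mb{u})\le f(\mb{v}) \textrm{ whenever } \mb{u}\le\mb{v}\}$, with piecewise-linear inverse $\omega(\mb{v})=f(\mb{v})-\max_{\mb{u}\lessdot\mb{v}}f(\mb{u})$. The order cone is naturally decomposed into chambers $\mc{S}_\tau=\{f : f(\tau(1))\le \cdots \le f(\tau(|P|))\}$ indexed by linear extensions $\tau$ of $P$; each $\mc{S}_\tau$ is a simplicial cone spanned by the filter indicators $\mathbf{1}_{\{\tau(k),\ldots,\tau(|P|)\}}$, and pulling these filters back through the inverse transfer map produces the extreme rays of Theorem \ref{thm:simplicial_decomposition}, because the minimal elements of a filter form an antichain. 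Since the inverse transfer map is linear on each chamber, the barycentric coefficient $c_k(\omega)$ in the pulled-back simplicial cell equals the corresponding coefficient of $f=G_P(\cdot;\omega)$ in $\mc{S}_\tau$, namely the telescoping gap $G_P(\tau(k);\omega)-G_P(\tau(k-1);\omega)$ with the convention $G_P(\tau(0);\omega):=0$.

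This suggests the globally defined choice
\[
\Lambda_k(\omega):=g_{(k)}(\omega)-g_{(k-1)}(\omega),\qquad k=1,\ldots,|P|,
\]
where $g_{(1)}(\omega)\le \cdots\le g_{(|P|)}(\omega)$ is the nondecreasing rearrangement of the passage times $\{G_P(\mb{v};\omega):\mb{v}\in P\}$ and $g_{(0)}(\omega):=0$. Each $\Lambda_k$ is continuous and piecewise-linear as a difference of order statistics of piecewise-linear functions, and a telescoping sum gives
\[
\sum_{k=1}^{|P|}\Lambda_k(\omega)=g_{(|P|)}(\omega)=\max_{\mb{v}\in P} G_P(\mb{v};\omega)=G_P(\omega).
\]
The main point requiring care is the bookkeeping of the transfer map identifying filters in the order cone with antichains in $\mcr{ER}(\pi)$, and in particular verifying that the inverse transfer map is linear on each chamber so that extreme-ray expansions are preserved. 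Once this dictionary is in hand, both the existence of the $\Lambda_k$ and the explicit formula above follow immediately from the elementary telescoping identity on order statistics.
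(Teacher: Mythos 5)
Your construction matches the paper's: the paper defines $\Lambda_i = G_P(\mb{v}_i) - G_P(\mb{v}_{i-1})$ where $\mb{v}_1, \ldots, \mb{v}_{|P|}$ are the elements of $P$ sorted by their passage times, which is precisely your $g_{(i)}(\omega) - g_{(i-1)}(\omega)$, and both derivations run through the same order-cone / $J(P)$-chain machinery of Theorem \ref{thm:simplicial_decomposition}. One small observation worth making: your closing telescoping identity
\[
\sum_{k=1}^{|P|}\bigl(g_{(k)}(\omega)-g_{(k-1)}(\omega)\bigr)=g_{(|P|)}(\omega)=\max_{\mb{v}\in P}G_P(\mb{v};\omega)=G_P(\omega)
\]
actually proves the corollary as stated on its own, with no appeal to the simplicial decomposition or to the linearity of the transfer map on chambers; the chamber/filter bookkeeping is only needed if one additionally wants to see that these $\Lambda_k$ coincide with the barycentric coordinates in the decomposition of Theorem \ref{thm:simplicial_decomposition}, which is what gives the corner-growth interpretation the paper emphasizes.
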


The main purpose of this corollary is that it converts a complicated maximum of random variables into a sum of the same number of random variables. While sums are usually easier to handle, the mapping from $\omega$ to $\Lambda$ is piecewise linear and induces a complicated correlation structure on the $\Lambda_i$ random variables, even when the underlying $\omega$ distribution is nice. This representation of the passage time as a sum of random variables is equivalent to the corner growth representation of the last passage model \cite{timo:CGM_notes, Romik:book}, in which the elements of the poset are ``filled in'' at random times that obey the ordering of the poset. 
We briefly explain this connection towards the end of Section \ref{sec:simplicial_decomp}. In Section \ref{sec:exp_weights} we give some explanation of how the iid exponential distribution for the weight variables interacts nicely with the geometry of the last passage function; this gives some additional intuition into why the exponential distribution tends to produce the most precise results. In Section \ref{sec:uniform_weights} we describe how the geometrical description of the maximal cones can be used to give an alternative description of the passage time for iid Uniform$(0,1)$ weights, in terms of Stanley's order cone \cite{stanley:two_poset_polytopes}. Finally, in Section \ref{sec:open} we list a series of open problems that this work has led us to.
\newline 

\noindent
\textbf{Acknowledgments:} Tom Alberts gratefully acknowledges the contributions of Bryant Lin, who performed computer work that laid the groundwork for this project as part of a Summer Undergraduate Research Fellowships (SURF) program at Caltech . Alberts and Lin thank the SageMath project (\texttt{www.sagemath.org}) for their excellent software which enabled us to explore and verify properties of the maximal cones. Alberts and Cator thank Leonid Petrov for helpful discussions. Alberts was supported by Simons Collaboration Grant 351687 and National Science Foundation grants DMS-1811087 and DMS-1715680.  

\section{Extreme Rays \label{sec:extreme_rays}}

In this section we concentrate on proving Theorem \ref{thm:extreme_rays}. We recall that for a vector $\omega$ to be an extreme ray of a polyhedral cone means that it can only be written as a (positive) weighted sum of (positive) multiples of itself, i.e. if $\omega, \omega_1, \omega_2$ are all in the same polyhedral cone then
\[
\omega = \alpha \omega_1 + \beta \omega_2 \textrm{ with } \alpha, \beta > 0 \implies \omega_1, \omega_2 \in \Span_+ \{ \omega \}.
\]
Another way of saying this is that the only linear subspace of directions in which one can move infinitesimally away from $\omega$ and still remain in the cone is $\Span \{ \omega \}$. We will use this type of argument throughout our analysis, which leads to the following definition:

\begin{definition}
For a path $\pi \in \Pi_P$ and a vector $\omega \in \R_+^P$, we define the \textit{perturbation space} of $\omega$ in the cone $\mc{C}(\pi)$ by
\[
\mc{D}_\pi(\omega) = \left \{ \sigma \in \R^P : \exists \, \epsilon > 0 \textrm{ such that } \omega \pm \epsilon \sigma \in \mc{C}(\pi) \right \},
\]
so long as $\omega \in \mc{C}(\pi)$. If $\omega \not \in \mc{C}(\pi)$ we set $\mc{D}_\pi(\omega) = \emptyset$.
\end{definition}

Note that as long as $\omega \in \mc{C}(\pi)$ then scale invariance of the cone implies that $\Span \{ \omega \} \subset \mc{D}_{\pi}(\omega)$. Furthermore, from this definition it is straightforward to verify:

\begin{lemma}
The perturbation space $\mc{D}_\pi(\omega)$ is a linear subspace of $\R^P$. Moreover, if $\omega \in \mc{C}(\pi)$ then it is an extreme ray of $\mc{C}(\pi)$ if and only if $\mc{D}_\pi(\omega) = \operatorname{Span}\{\omega\}$.
\end{lemma}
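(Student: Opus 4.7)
The plan is to split the verification into the two assertions and leverage only convexity and scale invariance of $\mc{C}(\pi)$. For the first assertion, closure of $\mc{D}_\pi(\omega)$ under scalar multiplication is immediate: if $\omega \pm \epsilon \sigma \in \mc{C}(\pi)$ and $c \in \R\setminus\{0\}$, then $\omega \pm (\epsilon/|c|)(c\sigma) \in \mc{C}(\pi)$. For closure under addition, suppose $\sigma_1, \sigma_2 \in \mc{D}_\pi(\omega)$ with witnesses $\epsilon_1, \epsilon_2 > 0$. For any $\epsilon \le \tfrac{1}{2}\min(\epsilon_1, \epsilon_2)$, both $\omega \pm 2\epsilon \sigma_i$ lie in $\mc{C}(\pi)$, so by convexity
\[
\omega \pm \epsilon(\sigma_1 + \sigma_2) \;=\; \tfrac{1}{2}(\omega \pm 2\epsilon\sigma_1) + \tfrac{1}{2}(\omega \pm 2\epsilon\sigma_2) \;\in\; \mc{C}(\pi),
\]
so $\sigma_1 + \sigma_2 \in \mc{D}_\pi(\omega)$.

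For the forward direction of the equivalence, assume $\omega$ is an extreme ray and let $\sigma \in \mc{D}_\pi(\omega)$ with witness $\epsilon > 0$. Writing
\[
\omega = \tfrac{1}{2}(\omega + \epsilon \sigma) + \tfrac{1}{2}(\omega - \epsilon \sigma)
\]
expresses $\omega$ as a positive combination of two elements of $\mc{C}(\pi)$, so the defining property of an extreme ray forces $\omega + \epsilon\sigma = c_1 \omega$ and $\omega - \epsilon \sigma = c_2 \omega$ for some $c_1, c_2 \geq 0$; subtracting yields $\sigma = \tfrac{c_1 - c_2}{2\epsilon}\omega \in \Span\{\omega\}$. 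Since $\Span\{\omega\} \subseteq \mc{D}_\pi(\omega)$ always holds by scale invariance, this gives equality.

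For the converse, assume $\mc{D}_\pi(\omega) = \Span\{\omega\}$ and suppose $\omega = \alpha \omega_1 + \beta \omega_2$ with $\alpha, \beta > 0$ and $\omega_1, \omega_2 \in \mc{C}(\pi)$. Set $\sigma := \omega_1 - \omega$. On one side, $\omega + \epsilon \sigma = (1-\epsilon)\omega + \epsilon \omega_1$ is a convex combination in $\mc{C}(\pi)$ for all $\epsilon \in [0,1]$. On the other side,
\[
\omega - \epsilon \sigma = (1+\epsilon)\omega - \epsilon \omega_1 = \bigl((1+\epsilon)\alpha - \epsilon\bigr)\omega_1 + (1+\epsilon)\beta\,\omega_2,
\]
which is a positive combination of $\omega_1, \omega_2 \in \mc{C}(\pi)$ once $\epsilon$ is small enough that $(1+\epsilon)\alpha > \epsilon$; since $\mc{C}(\pi)$ is a convex cone, this combination lies in $\mc{C}(\pi)$. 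Thus $\sigma \in \mc{D}_\pi(\omega) = \Span\{\omega\}$, so $\omega_1 = (1+c)\omega$ for some scalar $c$; nonnegativity of $\omega_1$ and $\omega$ force $1 + c \geq 0$, hence $\omega_1 \in \Span_+\{\omega\}$, and symmetrically $\omega_2 \in \Span_+\{\omega\}$. This shows $\omega$ is an extreme ray. There is no real obstacle here—the content is a routine unwinding of the definitions—so the only thing to watch is ensuring that the perturbations land inside $\mc{C}(\pi)$, which in each case is handled by choosing $\epsilon$ small and invoking that $\mc{C}(\pi)$ is a convex cone.
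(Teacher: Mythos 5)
The paper supplies no proof of this lemma, dismissing it as ``straightforward to verify,'' so there is no approach of the paper's to compare against. Your argument is a correct and complete filling-in of that gap: the subspace property via scale invariance and convexity, the forward direction by splitting $\omega$ as the midpoint of $\omega \pm \epsilon\sigma$ and invoking the extreme-ray definition, and the converse by perturbing along $\sigma = \omega_1 - \omega$ and showing both perturbed points stay in the cone (one by convexity, the other as a positive combination of $\omega_1,\omega_2$ for small $\epsilon$). Two tiny points worth noting for the record: the scalar-multiplication step should also cover $c=0$ (which is immediate since $\omega \pm \epsilon\cdot 0 = \omega \in \mc{C}(\pi)$), and the first sentence of the lemma implicitly presupposes $\omega \in \mc{C}(\pi)$, since the paper declares $\mc{D}_\pi(\omega)=\emptyset$ otherwise and the empty set is not a subspace — your proof correctly operates under that standing assumption.
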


The previous lemma will be our key tool for proving Theorem \ref{thm:extreme_rays}. First we will show that all antichains of $P$ that intersect $\pi$ have only their span in their perturbation space, and then conversely that all weight vectors that make a given path maximal and have only their span in their pertubation space must be maximal. To this end we first note the following simplification.

\begin{remark}
Fix a path $\pi$. Then to determine the perturbation space of a vector $\omega \in \mathcal{C}(\pi)$ it is enough to consider only the non-zero entries of the vector that can be perturbed. Indeed, the zero entries can never be perturbed since necessarily the perturbation in either the positive or negative direction will take them out of $\R_+^P$, which violates that $\mathcal{C}(\pi)$ is a subset of $\R_+^P$. We will use this simple fact repeatedly so we define:
\end{remark}

\begin{definition}
For a vector $\omega \in \R^P$ we define the support of $\omega$ to be the subset of vertices on which $\omega$ is non-zero, i.e. $\supp(\omega) := \{ \mb{v} \in P : \omega(\mb{v}) \neq 0\}$. We let $\zero(\omega) = P \backslash \supp(\omega) = \{ \mb{v} \in P : \omega(\mb{v}) = 0\}$.
\end{definition}

\begin{proof}[Proof of Theorem \ref{thm:extreme_rays} -- Antichains are extreme rays]
Fix $\pi \in \Pi_P$ and suppose that $a$ is an antichain of $P$. The first observation is that all paths have length either $0$ or $1$ under $a$ since the vertices along a path are in order and the elements of $a$ are completely out of order, hence a given path can intersect $a$ at most once. If $a$ is non-zero at some vertex of $\pi$ then clearly $\pi$ has length $1$ under $a$ and therefore is maximal under $a$, i.e. $a \in \mc{C}(\pi)$.

Now let $\sigma \in \mc{D}_\pi(a)$. By the last remark we can assume that $\sigma$ has zero entries at all vertices where $a$ has zero entries, i.e. $\supp(\sigma) \subset \supp(a)$. Now suppose that $\sigma$ is not constant on $\supp(a)$. Choose $\epsilon>0$ such that $a-\epsilon\sigma\geq 0$ (this is possible since $a$ is strictly positive on $\supp(a)$). Then under the weight vector $a + \epsilon \sigma$ the maximal paths are those which were maximal under $a$ \textit{and} pass through vertices in $\supp(a)$ at which $\sigma$ achieves its maximal value, which by the non-constancy assumption is not all of $\supp(a)$. If $\pi$ is not one of these paths then it is no longer one of the longest, so by definition $a + \epsilon \sigma \not \in \mc{C}(\pi)$, and this holds for all $\epsilon > 0$. If $\pi$ is one of these paths then the non-constancy assumption means it cannot be longest under $a - \epsilon \sigma$, again for all $\epsilon$ small enough. Thus if $\sigma$ is not constant on $\supp(a)$, it cannot be in $\mc{D}_\pi(a)$, which completes the proof.
\end{proof}

To prove the opposite direction is relatively simple but slightly lengthier, so we break the proof into several smaller supporting results. First recall the following terminology:

\begin{definition}
For $A \subset P$ the lower set $L(A)$ of $A$ is the set of elements below $A$ in $P$, i.e. $L(A) = \{ \mb{u} \in P : \mb{u} \leq \mb{v} \textrm{ for some } \mb{v} \in A \}$. Similarly the upper set is $U(A) = \{ \mb{u} \in P : \mb{u} \geq \mb{v} \textrm{ for some } \mb{v} \in A \}$. We also define their boundaries $\bd L(A)$ and $\bd U(A)$ as the maximal and minimal elements of $L(A)$ and $U(A)$, respectively.
\end{definition}

Note that both $\bd L(A)$ and $\bd U(A)$ are antichains of $P$, and from this one immediately has that:

\begin{lemma}
A subset $A \subset P$ is an antichain of $P$ iff $\bd L(A) = A = \bd U(A)$.
\end{lemma}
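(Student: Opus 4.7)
The plan is to prove the two equalities $\bd L(A) = A$ and $\bd U(A) = A$ separately via double inclusion, noting that the $U$ case is entirely symmetric to the $L$ case by reversing the order on $P$, so I only need to think carefully about $\bd L(A) = A$.

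For the forward direction, assume $A$ is an antichain. To show $A \subset \bd L(A)$, I fix $\mb{v} \in A$, observe $\mb{v} \leq \mb{v}$ places $\mb{v} \in L(A)$, and then show $\mb{v}$ is maximal in $L(A)$: any $\mb{u} \in L(A)$ with $\mb{u} \geq \mb{v}$ must satisfy $\mb{u} \leq \mb{w}$ for some $\mb{w} \in A$, hence $\mb{v} \leq \mb{w}$, and the antichain condition forces $\mb{v} = \mb{w}$, which pinches $\mb{u} = \mb{v}$. For the reverse inclusion $\bd L(A) \subset A$, take $\mb{u} \in \bd L(A) \subset L(A)$, choose any witness $\mb{v} \in A$ with $\mb{u} \leq \mb{v}$, and note that $\mb{v} \in L(A)$ too (via $\mb{v} \leq \mb{v}$); maximality of $\mb{u}$ in $L(A)$ forces $\mb{u} = \mb{v} \in A$.

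For the reverse direction, assume $\bd L(A) = A = \bd U(A)$ and suppose toward contradiction that $A$ is not an antichain, i.e., there exist $\mb{v}, \mb{w} \in A$ with $\mb{v} < \mb{w}$. Then $\mb{v} \in L(A)$ is strictly dominated by $\mb{w} \in L(A)$, so $\mb{v}$ cannot be maximal in $L(A)$, contradicting $\mb{v} \in A = \bd L(A)$. (The single equality $\bd L(A) = A$ already suffices here; $\bd U(A) = A$ provides a symmetric alternative argument.)

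I do not anticipate any genuine obstacle: the lemma is essentially a tautological unpacking of the definitions of lower set, upper set, and antichain, together with the observation that the boundary of $L(A)$ consists of maximal elements. The only mild care is in picking the correct witness in $A$ to derive contradictions or containments, but in each case the element of $A$ itself serves as its own witness in $L(A)$ or $U(A)$.
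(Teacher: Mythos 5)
Your proof is correct. The paper does not actually supply a proof of this lemma—it states, just before, that $\bd L(A)$ and $\bd U(A)$ are always antichains, and then declares the lemma to follow ``immediately.'' Your double-inclusion argument fills in exactly the details the paper omits: the observation that for an antichain $A$ each $\mb{v}\in A$ is maximal in $L(A)$ (because any $\mb{u}\geq\mb{v}$ in $L(A)$ is pinched between $\mb{v}$ and some $\mb{w}\in A$, forcing $\mb{v}=\mb{w}=\mb{u}$), the reverse inclusion via maximality, the symmetry in the order reversal for $\bd U$, and the contradiction argument for the converse. You also correctly note that the converse needs only one of the two equalities, which matches the paper's hint that $\bd L(A)$ (or $\bd U(A)$) being an antichain does all the work in that direction.
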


We will use this lemma for the subset $\supp(\omega)$ determined by a weight vector $\omega \in \R^P$. In particular we use it to show that:

\begin{lemma}
If $\omega$ is an extreme ray of $\mc{C}(\pi)$ then necessarily $\supp(\omega)$ is an antichain of $P$.
\end{lemma}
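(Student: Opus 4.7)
The plan is to prove the contrapositive: assuming $S := \supp(\omega)$ is not an antichain, I will exhibit a vector $\sigma \in \mc{D}_\pi(\omega)$ that is not a scalar multiple of $\omega$, which by the preceding lemma contradicts the extremeness of $\omega$. The candidate is $\sigma = a$, the vector in $\R_+^P$ embedding the antichain $A := \bd U(S)$ of minimum elements of $S$, so that $\sigma(\mb{v}) = \1{\mb{v} \in A}$. By the antichain characterization just proved, the hypothesis forces $A \subsetneq S$, which automatically ensures $\sigma \notin \Span\{\omega\}$: any $\mb{w} \in S \setminus A$ has $\omega(\mb{w}) > 0$ while $\sigma(\mb{w}) = 0$. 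Moreover $\sigma$ is supported in $S$, so it vanishes on $\zero(\omega)$, and the only remaining condition for $\sigma \in \mc{D}_\pi(\omega)$ is $\langle \sigma, \pi - \pi' \rangle = 0$ for every path $\pi' \in \Pi_P$ with $\ell(\pi') = \ell(\pi)$.

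Since $A$ is an antichain and paths are chains, both $|\pi \cap A|$ and $|\pi' \cap A|$ lie in $\{0, 1\}$, so the tied-path condition reduces to the following key claim: for every $\omega$-maximal path $\pi''$, one has $|\pi'' \cap A| = 1$. First, $\pi'' \cap S \neq \emptyset$: otherwise $\ell(\pi'') = 0$, and since $\pi''$ is longest this propagates to $\ell \equiv 0$ on all of $\Pi_P$, which by extending each vertex of $P$ to a maximal chain forces $\omega \equiv 0$. Next I argue $\pi'' \cap A \neq \emptyset$ by contradiction: suppose it is empty and let $\mb{w}$ be the minimum element of the chain $\pi'' \cap S$. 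Then $\mb{w} \notin A$, so there exists $\mb{w}' \in S$ with $\mb{w}' < \mb{w}$.

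Build an alternative maximal chain $\pi^{\ast}$ by concatenating a saturated chain in $P$ from some minimum of $P$ up through $\mb{w}'$ and on up to $\mb{w}$ with the segment of $\pi''$ running from $\mb{w}$ to its maximum. By minimality of $\mb{w}$ in $\pi'' \cap S$, every vertex of $\pi''$ strictly below $\mb{w}$ lies in $\zero(\omega)$, hence $\ell(\pi'') = \omega(\mb{w}) + \sum_{\mb{v} \in \pi'',\, \mb{v} > \mb{w}} \omega(\mb{v})$. The chain $\pi^{\ast}$ contains $\mb{w}'$, $\mb{w}$, and the same upper segment of $\pi''$, so $\ell(\pi^{\ast}) \geq \omega(\mb{w}') + \omega(\mb{w}) + \sum_{\mb{v} \in \pi'',\, \mb{v} > \mb{w}} \omega(\mb{v}) = \omega(\mb{w}') + \ell(\pi'')$, strictly greater than $\ell(\pi'')$ since $\omega(\mb{w}') > 0$, contradicting the $\omega$-maximality of $\pi''$.

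The main obstacle is this weight comparison for $\pi^{\ast}$, which hinges on the observation that the vertices of $\pi''$ strictly below the minimum of $\pi'' \cap S$ contribute nothing to $\ell(\pi'')$, so that swapping in a chain through $\mb{w}'$ yields a strict gain. Once the key claim is established, applying it to both $\pi$ and to every tied $\pi'$ gives $|\pi \cap A| = |\pi' \cap A| = 1$, so $\langle \sigma, \pi - \pi' \rangle = 0$ and hence $\sigma \in \mc{D}_\pi(\omega) \setminus \Span\{\omega\}$, completing the contradiction.
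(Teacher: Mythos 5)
Your proof is correct and takes essentially the same approach as the paper: both hinge on the same key claim, proved by the same splicing argument, that every $\omega$-maximal path must pass through the antichain $\bd U(\supp(\omega))$ of minimal support vertices (otherwise one inserts a strictly lower support vertex and obtains a longer path). The only structural difference is the perturbation direction: the paper proves the parallel claim for $\bd L(\supp(\omega))$ as well and perturbs by $1_{\bd L} - 1_{\bd U}$, so that all maximal path lengths are exactly preserved, whereas you perturb by $1_{\bd U}$ alone, which shifts all maximal path lengths by the same amount. Your version is marginally leaner, needing only one boundary antichain and not the helper lemma characterizing antichains by $\bd L(A) = A = \bd U(A)$, while the paper's symmetric choice makes the invariance of the maximal lengths more visible.
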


\begin{proof}
For shorthand write $\bd L = \bd L(\supp(\omega))$ and $\bd U = \bd U(\supp(\omega))$. We will show that $\bd L = \bd U$. First observe that any path $\pi'$ can pass through at most one element from each of $\bd L$ and $\bd U$, since both are unordered antichains and the path is ordered. Suppose $\pi'$ is one of the paths which is longest under $\omega$. Define $S=\pi'\cap \supp(\omega)$. Since $S\subset \pi'$, it has a unique minimal element $\mb{v}$. If $\mb{v}\not\in\bd U$, there exists $\mb{w}<\mb{v}$ with $\omega(\mb{w})>0$ (since $\mb{v}\in U(\supp(\omega))$). Then we can construct a path $\pi''$ such that $\mb{w}\cup S\subset \pi''$, which would mean that $\pi''$ is longer than $\pi'$ under $\omega$; contradiction. This shows that $\pi'\cap \bd U\neq \emptyset$, and in a similar manner we can show that $\pi'\cap \bd L\neq \emptyset$. Hence for all such $\pi'$ we have that $\langle \pi', 1_{\bd L} \rangle = 1 = \langle \pi', 1_{\bd U} \rangle$, where $1_{\bd L}$ and $1_{\bd U}$ are the indicator functions of $\bd L$ and $\bd U$, respectively.

Now let $\epsilon_1 = \min \{ \langle \pi',\omega\rangle - \langle \pi'',\omega\rangle : \langle \pi',\omega\rangle > \langle \pi'',\omega\rangle \}$, $\epsilon_2 = \min \{ \omega(\mb{v}) : \mb{v}\in\supp(\omega)\})$ and $\epsilon=\min(\epsilon_1,\epsilon_2)$. Note that $\epsilon>0$. Define a vector $\sigma \in \R^P$ by
\[
\sigma = \frac{\epsilon}{2} (1_{\bd L} - 1_{\bd U}).
\]
Because $\epsilon\leq \epsilon_2$, we have that $\omega-\sigma\geq 0$ (because $\supp(\sigma) \subset \bd L \cup \bd U \subset \supp(\omega)$, so that $\sigma$ is zero at all vertices where $\omega$ is). We proceed by contradiction. Using from above that $\langle \pi', 1_{\bd L} \rangle = 1 = \langle \pi', 1_{\bd U} \rangle$ for \textit{any} $\pi'$ which is longest under $\omega$ (including $\pi$ itself), we have
\[
\langle \pi', \omega \pm \sigma \rangle = \langle \pi', \omega \rangle \pm \frac{\epsilon}{2}(\langle \pi', 1_{\bd L} \rangle) - \langle \pi', 1_{\bd U} \rangle) = \langle \pi', \omega \rangle,
\]
so that all paths which were longest under $\omega$ are still longest under $\omega \pm \sigma$ (here we use that $\epsilon\leq \epsilon_1$, so that second longest paths cannot overtake any of the longest paths). In particular $\pi$ itself is still a longest path, so $\omega \pm \sigma \in \mc{C}(\pi)$, which implies that $\sigma \in \mc{D}_{\pi}(\omega)$. But the assumption $\bd L \neq \bd U$ also gives that $\sigma \not \in \Span \{ \omega \}$, which is a contradiction to $\omega$ being an extreme ray.
\end{proof}

To complete the proof of Theorem \ref{thm:extreme_rays} it only remains to be shown that each extreme ray must be constant on its support.

\begin{proof}[Proof of Theorem \ref{thm:extreme_rays} -- Extreme rays must be constant on antichains] Let $\omega$ be an extreme ray of $\mc{C}(\pi)$. Then by the last lemma its support is an antichain of $P$. Thus $\pi$ can pass through at most one element of $\supp(\omega)$, but it must path through at least one since otherwise its length would be zero and it could not be maximal.

Let $\mb{v}$ be the element of $\supp(\omega)$ that $\pi$ passes through. If $\omega(\mb{v}) < \omega(\mb{w})$ for some $\mb{w} \in \supp(\omega)$ then $\pi$ could not have been longest under $\omega$ since any path that goes through $\mb{w}$ would be longer. If, on the other hand, $\omega$ achieves its maximal value at $\mb{v}$ then let $A = \{ \mb{w} \in \supp(\omega) : \omega(\mb{v}) > \omega(\mb{w}) \}$. Then under $\omega$ the maximal paths are those which pass through $\supp(\omega) \backslash A$. Let $\epsilon = \min \{ \omega(\mb{v}) - \omega(\mb{w}) : \mb{w} \in A \}$, which we note is strictly positive, and then define a vector $\sigma \in \R^P$ by
\[
\sigma = \frac{\epsilon}{3}(1_{\supp(\omega) \backslash A} - 1_A).
\]
Then any path $\pi'$ which was longest under $\omega$ is still longest under $\omega \pm \sigma$, and hence $\sigma \in \mc{D}_{\pi}(\omega)$. But if $A \neq \emptyset$ then $\sigma \not \in \Span \{ \omega \}$, and  this says $\mc{D}_{\pi}(\omega)$ is strictly larger than $\Span \{ \omega \}$. This contradicts that $\omega$ is an extreme ray of $\mc{C}(\pi)$.
\end{proof}

Finally, we end this section by proving a formula for the number of extreme rays of a maximal cone $\mc{C}(\pi)$ on the subposet $[1,m] \times [1,n]$ of $\Z^2$. 

\begin{theorem}\label{thm:extreme_rays_count}
Let $m, n > 1$ and $P = [1,m] \times [1,n]$ as a subposet of $\Z^2$ with the componentwise ordering. Write a path $\pi \in \Pi_P$ as the ordered collection of vertices $(u_i, v_i)$ with $(u_1,v_1) = (m,n)$, $(u_{m+n-1}, v_{m+n-1}) = (m,n)$, 
and $(u_{i+1}, v_{i+1}) - (u_i, v_i) \in \{ (0,1), (1,0) \}$. Then the number of extreme rays in $\mcr{ER}(\pi)$ is
\begin{align}\label{eqn:num_extreme_rays_per_path}
\sum_{i=1}^{m+n-1} \dbinom{n+u_i-v_i-1}{u_i-1} \dbinom{m - u_i + v_i - 1}{v_i-1}.
\end{align}
\end{theorem}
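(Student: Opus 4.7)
The plan is to invoke Theorem \ref{thm:extreme_rays} and reduce the problem to a combinatorial count of antichains in rectangular subposets of $\Z^2$. By Theorem \ref{thm:extreme_rays}, the extreme rays of $\mc{C}(\pi)$ are in bijection with the antichains $A \subset P$ satisfying $|A \cap \pi| = 1$. Since $\pi$ is a chain, any antichain of $P$ meets it in at most one element, so this condition is equivalent to requiring that $A$ contains at least one element of $\pi$. This allows me to partition the set of extreme rays according to which vertex $(u_i, v_i)$ of $\pi$ is contained in $A$, reducing the count to $\sum_{i=1}^{m+n-1} N_i$, where $N_i$ denotes the number of antichains of $P$ that contain $(u_i, v_i)$.

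For $N_i$, I first observe that the set of elements of $P$ strictly incomparable to $(u_i, v_i)$ decomposes as the disjoint union of two rectangular subposets
\[
R_i^+ := [1, u_i - 1] \times [v_i + 1, n], \qquad R_i^- := [u_i + 1, m] \times [1, v_i - 1],
\]
and moreover every element of $R_i^+$ is incomparable to every element of $R_i^-$ (since elements of $R_i^+$ have smaller first and larger second coordinate than elements of $R_i^-$). Consequently an antichain of $P$ containing $(u_i, v_i)$ is determined uniquely by a pair $(A^+, A^-)$, where $A^{\pm}$ is an arbitrary antichain (possibly empty) of $R_i^{\pm}$. Thus $N_i = |\mathcal{A}(R_i^+)| \cdot |\mathcal{A}(R_i^-)|$, where $\mathcal{A}(R)$ denotes the collection of all antichains of $R$. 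I also need to verify that no vertex of $\pi$ other than $(u_i, v_i)$ lies in $R_i^+ \cup R_i^-$, which is immediate because every other vertex of $\pi$ is comparable to $(u_i, v_i)$.

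The final ingredient is the standard count that the number of antichains in $[1, p] \times [1, q]$ equals $\binom{p+q}{p}$. This follows from the classical bijection: each antichain $A$ is the set of maximal elements of the order ideal $L(A)$, and order ideals in a $p \times q$ rectangle are in bijection with Young diagrams fitting inside it, hence with monotone lattice paths from $(0,0)$ to $(p,q)$. Applying this to $R_i^+$ (with dimensions $(u_i - 1) \times (n - v_i)$) and $R_i^-$ (with dimensions $(m - u_i) \times (v_i - 1)$) gives $|\mathcal{A}(R_i^+)| = \binom{n + u_i - v_i - 1}{u_i - 1}$ and $|\mathcal{A}(R_i^-)| = \binom{m - u_i + v_i - 1}{v_i - 1}$, which combine into the summand in \eqref{eqn:num_extreme_rays_per_path}.

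There is no serious obstacle here beyond carefully identifying the two rectangular regions and recalling the antichain enumeration; the work is all done by Theorem \ref{thm:extreme_rays}. The only point worth double-checking is that an extreme ray cannot avoid $\pi$ entirely (the ``at least once'' direction), which follows since the zero vector is not an extreme ray and any path disjoint from $\pi$ still needs some element of $\pi$ to be present in the antichain in order to make $\pi$ have positive length and thus a chance of being maximal.
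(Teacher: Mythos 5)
Your proposal is correct and follows the same structure as the paper's proof: partition the antichains meeting $\pi$ by the unique vertex $(u_i,v_i)$ of $\pi$ they contain, observe that the elements of $P$ incomparable to $(u_i,v_i)$ split into two rectangles $R_i^+$ and $R_i^-$ that are pointwise incomparable to each other, and multiply the antichain counts. The only difference is in how you count antichains in a $p\times q$ rectangle: you use the classical bijection with order ideals and lattice paths to get $\binom{p+q}{p}$ directly, whereas the paper groups antichains by size and applies Vandermonde's identity to $\sum_{k\geq 0}\binom{p}{k}\binom{q}{k}$ — two standard derivations of the same binomial coefficient, leading to identical formulas.
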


\begin{proof}
Any extreme ray of $[1,a] \times [1,b]$ can have at most one non-zero entry in each row and column, thus at most $a \wedge b$ non-zero entries overall. To construct extreme rays with exactly $k \geq 1$ non-zero entries do the following: independently choose subsets $A \subset \{ 1, \ldots, a \}$ and $B \subset \{1, \ldots, b \}$ with $|A| = |B| = k$, and from them form $k$ vertices by pairing the elements of $A$, sorted in increasing order, with the elements of $B$, sorted in decreasing order. By construction these $k$ vertices are all out of order and hence form an antichain. Conversely, given any antichain of with exactly $k \geq 1$ vertices the corresponding subsets $A$ and $B$ are determined uniquely. Therefore the block $[1,a] \times [1,b]$ has exactly
\[
J(a,b) = \sum_{k=1}^{a \wedge b} \dbinom{a}{k} \dbinom{b}{k}
\]
antichains. By Vandermonde's identity
\[
1 + \sum_{k=1}^{a \wedge b} \dbinom{a}{k} \dbinom{b}{k} = \sum_{k=0}^{a \wedge b} \dbinom{a}{k} \dbinom{b}{k} = \dbinom{a+b}{a}
\]
Now for the formula for $|\mcr{ER}(\pi)|$, recall that by Theorem \ref{thm:extreme_rays} every antichain must contain exactly one vertex along the path. The sum in \eqref{eqn:num_extreme_rays_per_path} partitions the elements of $\mcr{ER}(\pi)$ according to which vertex is included. Each such vertex $(u_i, v_i)$ naturally breaks the poset $[1,a] \times [1,b]$ into four quadrants, and any extreme ray containing $(u_i, v_i)$ in its support must have the rest of its non-zero entries in the northwest and southeast quadrants. More precisely, the support must be in the complementary set of $L((u_i,v_i)) \cup U((u_i,v_i))$. The northwest quadrant is precisely $[1, u_i-1] \times [v_i+1, n]$ and the southwest one is $[u_i+1,m] \times [1,v_i-1]$. The total number of extreme rays containing $(u_i,v_i)$ can then be broken into four distinct types: those with non-zero entries in both quadrants, those with non-zero entries in only one of the quadrants, and the single extreme ray supported only at $(u_i,v_i)$. Therefore the total number of extreme rays of $[1,m] \times [1,n]$ that have $(u_i, v_i)$ in their support is
\[
(1 + J(u_i-1,n-v_i))(1 + J(m-u_i, v_i-1))
\]
Combining this with Vandermonde's identity completes the proof.
\end{proof}

\section{Facets \label{sec:faces}}

In this section we prove Theorem \ref{thm:faces} on the facets of the maximal cones $\mc{C}(\pi)$, again using reasoning that is purely in terms of the last passage model. For each path $\pi \in \Pi_P$ we start with the definition \eqref{defn:max_path_set} of $\mc{C}(\pi)$ and determine which inequalities that define it are redundant and which are necessary. The necessary ones are precisely the facets of the cone.

To accomplish this we let $N_{\pi}$ be the set of normal vectors which describe the half-spaces defining $\mc{C}(\pi)$, i.e.
\[
N_{\pi} = P \cup \left \{ \pi - \pi' : \pi' \in \Pi_P \backslash \{ \pi \} \right \}.
\]
Note that we are considering the elements of $P$ as the basis vectors $\delta_{\mb{v}}$, $\mb{v} \in P$, in this case. Then for each $\mb{\eta} \in N_{\pi}$ we define $\mc{C}(\pi; \mb{\eta})$ to be the same polyhedral cone as $\mc{C}(\pi)$ but after removing the bounding hyperplane with normal $\mb{\eta}$, i.e.
\[
\mc{C}(\pi; \mb{\eta}) = \{ \omega \in \R^P : \langle \omega, \mb{\eta'} \rangle \geq 0 \textrm{ for all } \mb{\eta'} \in N_{\pi} \backslash \{ \mb{\eta} \} \}.
\]
It is geometrically obvious that $\mb{\eta}$ is redundant if $\mc{C}(\pi; \mb{\eta}) = \mc{C}(\pi)$ and necessary otherwise. Equivalently, $\eta \in N_{\pi}$ is necessary iff $\mc{C}(\pi)$ is a \textit{proper} subset of $\mc{C}(\pi; \mb{\eta})$. Our strategy is to go through the normal vectors in $N_{\pi}$ and, for each one, try to find a weight vector that is in $\mc{C}(\pi; \mb{\eta})$ but not in $\mc{C}(\pi)$. That this strategy works can be seen by a duality argument, see the remark at the end of this section for more details. It can be used to quickly determine which of the inequalities of the form $\omega(\mb{v}) \geq 0$ are necessary and which are redundant.

\begin{proof}[Proof of Theorem \ref{thm:faces} -- Weights off the path must be positive]\renewcommand{\qedsymbol}{} Suppose $\mb{v} \in P \backslash \{ \pi \}$. Take any $\omega \in \mc{C}(\pi)$ and make the entry at $\mb{v}$ a negative value. This doesn't change the length of $\pi$, and in fact does not increase the length of any other path, so $\pi$ is still maximal under the new vector. This proves that $\mc{C}(\pi)$ is a proper subset of $\mc{C}(\pi;\mb{v})$, and therefore $\omega(\mb{v}) \geq 0$ is a necessary inequality.
\end{proof}

\begin{proof}[Proof of Theorem \ref{thm:faces} -- $\omega(\mb{v}) \geq 0$ for $\mb{v}$ a corner of $\pi$ is redundant]\renewcommand{\qedsymbol}{} If $\mb{v}$ is a corner of $\pi$ then there is another path $\pi'$ such that $\pi\setminus \{\mb{v}\}\subset \pi'$ and $\mb{v}\not\in \pi'$. Define $S=\pi'\setminus \pi$ (note that $S\neq \emptyset$) and $\omega \in \mc{C}(\pi;\mb{v})$. Since $\pi$ is still the longest path for $\omega$ (but possibly $\omega$ is negative at $\mb{v}$), and therefore at least as long as $\pi'$, we get
\[ \omega(\mb{v})\geq \sum_{\mb{v}'\in S} \omega(\mb{v}').\]
But for $\omega\in \mc{C}(\pi;\mb{v})$ we still have that $\omega(\mb{v'}) \geq 0$ for all $\mb{v}'\in S$, hence $\omega(\mb{v}) \geq 0$ also. This implies that $\mc{C}(\pi;\mb{v})=\mc{C}(\pi)$ and the inequality $\omega(\mb{v}) \geq 0$ is redundant.
\end{proof}

\begin{proof}[Proof of Theorem \ref{thm:faces} -- $\omega(\mb{v}) \geq 0$ for $\mb{v}$ on the path but not a corner is necessary]\renewcommand{\qedsymbol}{}
Let $L > 0$, choose $0 < \epsilon < L$, and consider a weight vector $\omega$ defined by
\[
\omega(\mb{u}) = \left\{
                   \begin{array}{ll}
                     -\epsilon, & \mb{u} = \mb{v} \\
                     L, & \mb{u} \in \pi \backslash \{ \mb{v} \} \\
                     0, & \mb{u} \not \in \pi
                   \end{array}
                 \right.
\]
Then $\omega$ is negative at $\mb{v}$, so $\omega\not\in \mc{C}(\pi)$. Now suppose $\pi'$ is longer than $\pi$ for $\omega$. Since the only positive weights are in $\pi \backslash \{ \mb{v} \}$, the only way this is possible is if $\pi \backslash \{ \mb{v} \} \subset \pi'$. But this would imply that $\mb{v}$ is a corner of $\pi$, and this is a contradiction. Since $\omega$ is non-negative on $P \backslash \{ \mb{v} \}$, we see that $\omega \in \mc{C}(\pi; \mb{v})$, making this inequality necessary.
\end{proof}

The necessity and redundancy of the normal vectors of the form $\pi - \pi'$ proof requires a better understanding of the properties of the disorder graph, which leads to the following proposition.

\begin{proposition}\label{prop:disorder}
Suppose $\pi$ and $\pi'$ are two different paths (maximal chains) on $P$. The disorder graph $\Delta(\pi,\pi')$ has the following properties.
\begin{enumerate}[(i)]
    \item $\Delta(\pi,\pi')$ is a bipartite graph, where the two parts are $\pi\setminus \pi'$ and $\pi'\setminus \pi$.
    \item\label{prop:satsubset} If $\Delta(\pi, \pi')$ is connected then $\pi \backslash \pi'$ and $\pi' \backslash \pi$ are saturated subsets of $P$. 
    \item\label{prop:interval} The set of neighbors of any vertex of $\Delta(\pi,\pi')$ is a non-empty saturated subset of $P$ (so it is an interval on the opposite path).
    \item For $\mb{v},\mb{u}\in \pi\setminus \pi'$, denote the set of neighbors in $\pi'$ as $\mc{N}_{\mb{v}}$ and $\mc{N}_{\mb{u}}$ respectively. If $\mb{v}<\mb{u}$, then 
    \[ \min \mc{N}_{\mb{v}}\leq \min \mc{N}_{\mb{u}}\ \ \mbox{and}\ \ \max \mc{N}_{\mb{v}}\leq \max \mc{N}_{\mb{u}}.\]
    The analogous statement holds for $\mb{v},\mb{u}\in \pi'\setminus \pi$.
    \item\label{prop:minimal_out_of_order} The minimal element of $\pi \backslash \pi'$ must be out of order with the minimal element of $\pi' \backslash \pi$, and hence are connected in the disorder graph. The same holds for the maximal elements.
    \item\label{prop:disconnected} Suppose $\Delta(\pi,\pi')$ is not connected. Then there exist $\mb{v}\in \pi$ and $\mb{v}'\in \pi'$ such that \[\{\mb{u}\in\pi \backslash \pi' : \mb{u}<\mb{v}\}\cup \{\mb{u}'\in\pi' \backslash \pi : \mb{u}'<\mb{v}'\} \ \mbox{and}\ \{\mb{u}\in\pi \backslash \pi' : \mb{u}\geq \mb{v}\}\cup \{\mb{u}'\in\pi' \backslash \pi : \mb{u}'\geq \mb{v}'\}\]
    are not connected in $\Delta(\pi, \pi')$, and these four sets are non-empty.
\end{enumerate}
\end{proposition}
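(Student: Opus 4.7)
The plan is to prove the parts in the order (i), (iii), (ii), (iv), (v), (vi), since (ii), (iv), (v), (vi) all invoke (iii). Part (i) is immediate from the observation right after Definition \ref{defn:disorder_graph}: any two elements of a maximal chain are comparable, so any edge of $\Delta(\pi,\pi')$ must join a vertex of $\pi \setminus \pi'$ to a vertex of $\pi' \setminus \pi$.

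For (iii), fix $\mb v \in \pi \setminus \pi'$ and consider the neighbor set $\mc N_{\mb v}$ on $\pi'$. The interval property follows from a short case analysis: given $\mb u_1 < \mb u_2$ in $\mc N_{\mb v}$ and $\mb w$ lying on $\pi'$ between them, $\mb w \le \mb v$ would force $\mb u_1 < \mb v$ and $\mb w \ge \mb v$ would force $\mb v < \mb u_2$, so $\mb w$ is incomparable with $\mb v$; the chain structure of $\pi$ then forces $\mb w \notin \pi$, putting $\mb w \in \mc N_{\mb v}$. For non-emptiness, if every element of $\pi'$ were comparable with $\mb v$, split $\pi' = \{\mb u \in \pi' : \mb u < \mb v\} \cup \{\mb u \in \pi' : \mb u > \mb v\}$; the minimum of $\pi'$ is a minimal element of $P$, so the lower part cannot be empty, and dually for the upper part, and then the cover relation between the max of the lower and the min of the upper on $\pi'$ would straddle $\mb v$ in $P$, an absurdity. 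Part (ii) follows by contrapositive: if $\pi \setminus \pi'$ is not saturated, then some $\mb v_k \in \pi \cap \pi'$ lies strictly between two elements of $\pi \setminus \pi'$ on $\pi$; every vertex of $\Delta$ sits on $\pi$ or $\pi'$, hence is comparable with $\mb v_k$, and any cross-pair between ``$< \mb v_k$'' and ``$> \mb v_k$'' in $P$ is comparable through $\mb v_k$, so no edge crosses this partition, which has vertices on both sides.

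Parts (iv) and (v) drop out of (iii) via short comparability chases. For (iv), if $\mb v < \mb u$ in $\pi \setminus \pi'$ but $\mb b := \min \mc N_{\mb u}$ came strictly before $\mb a := \min \mc N_{\mb v}$ on $\pi'$, each relation between $\mb b$ and $\mb v$ yields a contradiction: $\mb b < \mb v$ forces $\mb b < \mb u$; $\mb b > \mb v$ gives $\mb v < \mb b < \mb a$, hence $\mb v < \mb a$; incomparability places $\mb b \in \mc N_{\mb v}$ strictly before $\mb a$. The maximum statement is symmetric. For (v), if $\mb v := \min(\pi \setminus \pi')$ and $\mb v' := \min(\pi' \setminus \pi)$ satisfied $\mb v < \mb v'$, then $\min \mc N_{\mb v} \ge \mb v'$ on $\pi'$ by minimality of $\mb v'$ in $\pi' \setminus \pi$, and transitivity through $\mb v'$ gives $\mb v < \min \mc N_{\mb v}$, contradicting incomparability; the reverse case is symmetric, and the maximum version is dual.

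Part (vi) is the main obstacle and requires organizing $\Delta$'s components via a loop decomposition. Writing $\pi \cap \pi' = \{\mb c_1 < \cdots < \mb c_m\}$ with the convention $\mb c_0 = -\infty, \mb c_{m+1} = +\infty$, define the loops $L_i := \{\mb u \in \pi \triangle \pi' : \mb c_i < \mb u < \mb c_{i+1}\}$ for $0 \le i \le m$. Two preliminary facts anchor the argument: a cover-relation check forces $L_i \cap \pi = \emptyset \iff L_i \cap \pi' = \emptyset$ (else a cover of $\mb c_i$ by $\mb c_{i+1}$ on one side would forbid intermediate elements on the other), and distinct loops $L_i, L_j$ with $i < j$ have their vertices separated by $\mb c_{i+1}$ in $P$, so no edge of $\Delta$ runs between different loops. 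The key additional observation is that a single non-trivial loop forces $\Delta$ to be connected: by (iii) every element of $\pi' \setminus \pi$ lies in some $\mc N_{\mb v}$ for $\mb v \in \pi \setminus \pi'$, so the union of these intervals covers $\pi' \setminus \pi$, and the monotonicity (iv) forces consecutive intervals to overlap (a genuine gap would lie in some later $\mc N_{\mb v_j}$, contradicting (iv)), chaining everything into one component. Hence $\Delta$ disconnected forces at least two non-trivial loops $L_{i_1}, L_{i_2}$ with $i_1 < i_2$; setting $\mb v = \mb v' = \mb c_{i_1+1}$ places one non-trivial loop in each of the four subsets of the statement, so all are non-empty, and elements of $A \cup B$ lie strictly below $\mb c_{i_1+1}$ in $P$ while elements of $C \cup D$ lie strictly above, ruling out cross edges.
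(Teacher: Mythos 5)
Parts (i)--(v) of your argument are correct and essentially parallel the paper's, with a few different micro-level choices: for the non-emptiness in (iii) the paper simply notes that $\mathcal{N}_{\mathbf{v}} = \emptyset$ would let one insert $\mathbf{v}$ into $\pi$, contradicting maximality, rather than your cover-relation straddle argument; for (v) the paper again argues by extending the chain $\pi'$, whereas you invoke (iii) and transitivity. Both sets of arguments are valid.

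Part (vi) is where there is a genuine gap. Your ``key additional observation'' --- that a single non-trivial loop forces $\Delta(\pi,\pi')$ to be connected --- is false on general posets. Take $P$ to be the four-element poset with $\mathbf{a},\mathbf{b}$ minimal, $\mathbf{c},\mathbf{d}$ maximal, and relations $\mathbf{a}<\mathbf{c}$, $\mathbf{a}<\mathbf{d}$, $\mathbf{b}<\mathbf{c}$, $\mathbf{b}<\mathbf{d}$. Then $\pi=\{\mathbf{a},\mathbf{c}\}$ and $\pi'=\{\mathbf{b},\mathbf{d}\}$ are maximal chains with $\pi\cap\pi'=\emptyset$, so in your notation there is exactly one (non-trivial) loop $L_0$. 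But $\Delta(\pi,\pi')$ has the two edges $\{\mathbf{a},\mathbf{b}\}$ and $\{\mathbf{c},\mathbf{d}\}$ and is disconnected. The flaw in your chaining step is the leap from ``no gap'' to ``overlap'': the intervals $\mathcal{N}_{\mathbf{a}}=\{\mathbf{b}\}$ and $\mathcal{N}_{\mathbf{c}}=\{\mathbf{d}\}$ are monotone and cover $\pi'\setminus\pi$, but they are merely adjacent, not overlapping, so the chain of components breaks. (The equivalence between ``single loop'' and ``connected disorder graph'' stated in the paper's remark is specific to $[1,m]\times[1,n]\subset\mathbb{Z}^2$; it does not hold on arbitrary posets.) Moreover, in this example $\pi\cap\pi'=\emptyset$, so there is no separating $\mathbf{c}_{i_1+1}$ available, and your construction of $\mathbf{v}=\mathbf{v}'=\mathbf{c}_{i_1+1}$ cannot even be carried out. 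The paper instead argues directly from connected components: it takes $C$ to be the component containing the two minima of $\pi\setminus\pi'$ and $\pi'\setminus\pi$ (connected by part (v)), lets $\mathbf{v}$ be the smallest element of $\pi\setminus\pi'$ outside $C$ (or symmetrically on $\pi'$), sets $\mathbf{v}'$ to be the least neighbor of $\mathbf{v}$, and then uses comparability arguments to show no edge crosses the resulting partition. You would need to replace your loop-based bookkeeping with this component-based one to close the gap.
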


\begin{proof}
\mbox{ }
\begin{enumerate}[(i)]
    \item Clearly, all vertices in $\pi$ are ordered, so there is no edge between these vertices, and the same holds for vertices in $\pi'$.
    \item Let $\mb{u}, \mb{w} \in \pi \backslash \pi'$. Suppose $\mb{v} \in \pi$ and $\mb{u} < \mb{v} < \mb{w}$. We want to show that $\Delta(\pi, \pi')$ being connected implies that $\mb{v} \in \pi \backslash \pi'$. Suppose not. Then it must be that $\mb{v} \in \pi \cap \pi'$, from which it follows that every element in $U(\mb{v}) \cap \Delta(\pi, \pi')$ is in order with every element in $L(\mb{v}) \cap \Delta(\pi, \pi')$ (they are in order through $\mb{v}$) hence these two sets are disconnected subsets of $\Delta(\pi, \pi')$. This is a contradiction unless either $U(\mb{v}) \cap \Delta(\pi, \pi') = \emptyset$ or $L(\mb{v}) \cap \Delta(\pi, \pi;) = \emptyset$, which is impossible because $\mb{u} \in L(\mb{v}) \cap \Delta(\pi, \pi')$ and $\mb{w} \in U(\mb{v}) \cap \Delta(\pi, \pi')$.
    \item We will prove a slightly more general statement: for any $\mb{v}\in P\setminus \pi$, the set $\mc{N}_\mb{v}$ of elements in $\pi$ that are out of order with $\mb{v}$ is non-empty and saturated. Suppose $\mc{N}_\mb{v}$ is empty. This means that $\mb{v}$ is in order with every element of $\pi$, contradicting the fact that $\pi$ is a maximal chain. Since $\mc{N}_\mb{v}$ is a non-empty subset of the chain $\pi$, it must have a minimal element denoted by $\mb{u}_0$ and a maximal element denoted by $\mb{u}_1$; it is possible that $\mb{u}_0=\mb{u}_1$. Now suppose $\mb{u}\geq \mb{u}_0$. Then it cannot happen that $\mb{u}<\mb{v}$, since this would imply that $\mb{u}_0<\mb{v}$, and we know that these two vertices are out of order. A completely analogous argument shows that if $\mb{u}\leq \mb{u}_1$, then it cannot happen that $\mb{u}>\mb{v}$. Therefore, if $\mb{u}_0\leq \mb{u}\leq \mb{u}_1$, $\mb{u}$ must be out of order with $\mb{v}$, so $\mc{N}_\mb{v}$ is indeed saturated.
    \item We proceed by contradiction. If $\min \mc{N}_{\mb{u}} < \min \mc{N}_{\mb{v}}$ then there must be a $\mb{u}'\in \mc{N}_\mb{u}$ such that $\mb{u}'<\min \mc{N}_\mb{v}$. The latter means that $\mb{u}'\not\in \mc{N}_\mb{v}$, which implies that $\mb{u}'$  must be in order with $\mb{v}$, i.e. either $\mb{u}' < \mb{v}$ or $\mb{v} < \mb{u}'$. The latter is impossible because it would imply $\mb{v} < \min \mc{N}_\mb{v}$ and we know that these two vertices are out of order. Thus $\mb{u}' < \mb{v}$. But also $\mb{v}<\mb{u}$ by assumption, so therefore $\mb{u'}<\mb{u}$, which contradicts the fact that $\mb{u}'\in \mc{N}_\mb{u}$. Thus $\min \mc{N}_{\mb{u}} < \min \mc{N}_{\mb{v}}$ is impossible, but because $\mc{N}_{\mb{u}}$ and $\mc{N}_{\mb{v}}$ are both subsets of the ordered chain $\pi'$ the only remaining option is that $\min \mc{N}_\mb{v}\leq \min \mc{N}_\mb{u}$. The statements for the max follow from completely similar arguments.
    \item Suppose $\mb{v}$ is the minimal element of $\pi \backslash \pi'$ and $\mb{v}'$ is the minimal element of $\pi' \backslash \pi$. If $\mb{v} <\mb{v}'$ was true, then $\mb{v}$ could be ``inserted'' into the path $\pi'$ to form a longer chain, i.e.
    \[
    (\pi \cap \pi' \cap L(\mb{v})) \cup (\pi' \cap U(\mb{v}))
    \]
    would be a chain in $P$ that contains $\mb{v}$ and $\pi'$. But since $\mb{v} \not \in \pi'$ this contradicts that $\pi'$ is a maximal chain. For the analogous reason we cannot have $\mb{v}'<\mb{v}$. Therefore $\mb{v}$ and $\mb{v}'$ must be out of order.
    \item By part \eqref{prop:minimal_out_of_order} the minimal element of $\pi \backslash \pi'$ is connected to the minimal element of $\pi' \backslash \pi$ in the disorder graph. Let $C$ be the connected component of $\Delta(\pi, \pi')$ containing them both. Then $C \neq \Delta(\pi, \pi')$ by assumption. Thus we may suppose that there exists a minimal $\mb{v} \in \pi \backslash \pi'$ that is not in $C$ (the case of a minimal element of $\pi' \backslash \pi$ not in $C$ is handled similarly). For this $\mb{v}$ it automatically follows that $\{ \mb{u} \in \pi \backslash \pi' : \mb{u} < \mb{v} \}$ is non-empty (it includes the minimal element of $\pi \backslash \pi'$) as is $\{ \mb{u} \in \pi \backslash \pi' : \mb{u} \geq \mb{v} \}$ (it contains $\mb{v}$). By part \eqref{prop:interval}, there is a minimal $\mb{v}' \in \pi' \backslash \pi$ such that $\mb{v}$ and $\mb{v}'$ are out of order (i.e. $\mb{v}'$ is the minimal neighbor of $\mb{v}$ in the disorder graph). Then we must have $\mb{v}' \not \in C$, because if $\mb{v}' \in C$ then $\mb{v}$ being connected to $\mb{v}'$ in the disorder graph would imply $\mb{v} \in C$ also, which contradicts the definition of $\mb{v}$. Thus $\{ \mb{u} \in \pi' \backslash \pi : \mb{u} < \mb{v}' \}$ is non-empty (it contains the minimal element of $\pi' \backslash \pi$ which is in $C$ and therefore different from $\mb{v}'$), as is $\{ \mb{u} \in \pi' \backslash \pi : \mb{u} \geq \mb{v}' \}$. This shows that each of the four sets is non-empty.
    
    Finally, we show that the two sets are not connected in the disorder graph. Let $\mb{u}^* \in \pi \backslash \pi'$ with $\mb{u}^* < \mb{v}$. Then by definition of $\mb{v}$ this means $\mb{u}^* \in C$. Therefore $\mb{u}^*$ and $\mb{v}'$ cannot be connected in the disorder graph (else it would imply $\mb{v} \in C)$ so therefore $\mb{u}^* < \mb{v}'$. Thus $\mb{u}^*$ cannot be connected in the disorder graph to $\{ \mb{u}' \in \pi \backslash \pi' : \mb{u}' \geq \mb{v} \}$. Further $\mb{u}^*$ already cannot be connected to $\{ \mb{u} \in \pi \backslash \pi' : \mb{u} \geq \mb{v} \}$. Thus $\{ \mb{u} \in \pi \backslash \pi' : \mb{u} < \mb{v} \}$ is not connected to $\{ \mb{u} \in \pi \backslash \pi' : \mb{u} \geq \mb{v} \} \cup \{ \mb{u}' \in \pi' \backslash \pi : \mb{u}' \geq \mb{v}' \}$, and by an analogous argument the latter set is also not connected to $\{ \mb{u} \in \pi' \backslash \pi : \mb{u} < \mb{v}' \}$. This completes the proof.
    

\end{enumerate}
\end{proof}

Now we return to the redundancy and necessity of inequalities of the form $\langle \pi - \pi', \omega \rangle \geq 0$.

\begin{proof}[Proof of Theorem \ref{thm:faces} -- If $\Delta(\pi, \pi')$ is disconnected then $\pi - \pi'$ is redundant.]\renewcommand{\qedsymbol}{}
By part \eqref{prop:disconnected} of Proposition \ref{prop:disorder}, there exists $\mb{v}\in \pi$ and $\mb{v}'\in \pi'$ such that $\mb{v}$ is above every vertex $\mb{u}'<\mb{v}'$, and $\mb{v}'$ is above every vertex $\mb{u}<\mb{v}$, and these two sets are non-empty. Let $\pi_1$ be the path which follows $\pi'$ until just before $\mb{v}'$ and then switches to $\mb{v}$ and follows $\pi$ afterwards. Then $\langle \pi - \pi_1, \omega \rangle \geq 0$ on $\mc{C}(\pi; \pi-\pi')$. Similarly, let $\pi_2$ be the path which follows $\pi$ until just before $\mb{v}$, then switches to $\mb{v}'$ and follows $\pi'$ afterwards. Then $\langle \pi - \pi_2 , \omega \rangle \geq 0$ on $\mc{C}(\pi;\pi-\pi')$. But since $\pi - \pi_1 + \pi - \pi_2 = \pi - \pi'$, this implies that $\langle \pi - \pi' , \omega \rangle \geq 0$ automatically on $\mc{C}(\pi;\pi-\pi')$, which means that $\pi - \pi'$ is redundant (or $\mc{C}(\pi;\pi-\pi')=\mc{C}(\pi)$). 
\end{proof}

\begin{proof}[Proof of Theorem \ref{thm:faces} -- If $\Delta(\pi, \pi')$ is connected then $\pi - \pi'$ is necessary.]
We need to show that the cone $\mc{C}(\pi; \pi - \pi')$ is strictly larger than $\mc{C}(\pi)$, meaning that there is a weight $\omega \in \R_+^P$ under which $\pi'$ is the unique longest path and $\pi$ is the second longest path, although $\pi$ may be tied for second longest with several other paths. We will give an explicit such weight vector. To do so we write $\pi \backslash \pi'$ as $\mb{u}_0 \lessdot \mb{u}_1 \lessdot \ldots \lessdot \mb{u}_{\ell}$, recalling that $\pi \backslash \pi'$ is a saturated subset of $P$ because the disorder graph is connected (Proposition \ref{prop:disorder}, part \ref{prop:satsubset}). Recall that we denote the upper set of $\mb{u}$ by $U(\mb{u})$. The claimed weight vector $\omega$ is 
\begin{itemize}
    \item $\omega(\mb{v}) = 1$ for $\mb{v} \in \pi'$,
    \item $\omega(\mb{u}_0) = |\pi' \backslash \pi| - |(\pi' \backslash \pi) \cap U(\mb{u}_0)| - 1$, and
    \item $\omega(\mb{u}_i) = |(\pi' \backslash \pi) \cap U(\mb{u}_{i-1})| - |(\pi' \backslash \pi) \cap U(\mb{u}_i)|$ for $i=1,\ldots,\ell$,
    \item $\omega(\mb{v}) = 0$ otherwise.
\end{itemize}
Note that all weights are non-negative, since $U(\mb{u}_{i})\subset U(\mb{u}_{i-1})$ and there exists at least one vertex in $\pi'\setminus \pi$ that is not an element of $U(\mb{u}_0)$ (the lowest element of $\pi'\setminus \pi$ is out of order with $\mb{u}_0$, by part \eqref{prop:minimal_out_of_order} of Proposition \ref{prop:disorder}). Under this weight vector the length of $\pi'$ is $\langle \pi', \omega \rangle = |\pi'|$, since there is weight one on each element of $\pi'$. Furthermore, the length of $\pi$ under $\omega$ is 
\begin{align*}
\langle \pi, \omega \rangle &= \langle \pi \cap \pi', \omega \rangle + \langle \pi \backslash \pi', \omega \rangle \\
&= |\pi \cap \pi'| + \omega(\mb{u}_0) + \omega(\mb{u}_1) + \ldots + \omega(\mb{u}_{\ell}) \\
&= |\pi \cap \pi'| + |\pi' \backslash \pi| - 1 - |(\pi' \backslash \pi) \cap U(\mb{u}_{\ell})| \\
&= |\pi'| - 1 \\
&= \langle \pi', \omega \rangle - 1.
\end{align*}
The fourth equality follows because $\mb{u}_{\ell}$, the highest element of $\pi \backslash \pi'$, is out of order with the highest element of $\pi' \backslash \pi$ and hence \textit{no} element of $\pi' \backslash \pi$ can be above $\mb{u}_\ell$. Thus $|(\pi' \backslash \pi) \cap U(\mb{u}_{\ell})| = 0$. Now since $\omega$ has only integer entries, any paths with distinct weights must have lengths that differ by at least one, hence $\langle \pi, \omega \rangle = \langle \pi', \omega \rangle - 1$ shows that $\pi$ is necessarily a second longest path. Thus $\omega$ satisfies the required conditions so long as there is no other path (distinct from $\pi$ and $\pi'$) with the same length as $\pi'$ under $\omega$. Since the non-zero weights in $\omega$ are only distributed along $\pi \cup \pi'$, any such path would have to switch between $\pi$ and $\pi'$, while also containing the common part $\pi \cap \pi'$.

We start by proving that switching from $\pi$ to $\pi'$ will not increase the length of a path. To do this we show that wherever we switch along $\pi$, the remaining weight available along $\pi'$ is the same as the remaining weight available along $\pi$. Suppose that $\mb{u}_i$ for $i\geq 1$ is the first weight along $\pi$ that we do not use, meaning that $\mb{u}_{i-1}$ is the last element along $\pi$ that we still go through. Then the weight available along $\pi'$ by making this switch is $|U(\mb{u}_{i-1})\cap (\pi'\backslash \pi)|$. On the other hand, had we stayed on $\pi$ the remaining weight we would have picked up on $\pi \backslash \pi'$ is
\[ 
\sum_{j=i}^\ell |(\pi' \backslash \pi) \cap U(\mb{u}_{j-1})| - |(\pi' \backslash \pi) \cap U(\mb{u}_j)| = |(\pi' \backslash \pi) \cap U(\mb{u}_{i-1})|.
\]
Again the last equality uses that $|(\pi' \backslash \pi) \cap U(\mb{u}_{\ell})| = 0$. This proves that making a switch from $\pi$ to $\pi'$ and staying on $\pi'$ is never profitable. 

Now consider a switch from $\pi'$ to $\pi$. Let $\mb{v}'$ be the element of $\pi' \backslash \pi$ that we switch from and $\mb{u}_i$ be the first element of $\pi \backslash \pi'$ that it is possible to switch to from $\mb{v}'$. Then it must be that $\mb{v}' < \mb{u}_i$. Let $\mb{w}'$ be the unique element in $\pi'$ that covers $\mb{v}'$ (so it is the next point on $\pi'$ after $\mb{v}'$). It cannot be that $\mb{u_i} < \mb{w}'$, since otherwise the path $\pi'$ could be extended by going from $\mb{v}$ to $\mb{u_i}$ to $\mb{w}'$; this would contradict that $\pi'$ is a maximal chain. Thus $\mb{w}' \in \pi' \backslash \pi$ also. Now suppose $\mb{u}_{i-1} < \mb{w}'$. Then $\{ \mb{u}_0, \ldots, \mb{u}_{i-1} \}$ is not connected in $\Delta(\pi, \pi')$ to the set $\{ \mb{u}' \in \pi' : \mb{u}' \geq \mb{w}' \}$, while $\{ \mb{u}_i, \ldots, \mb{u}_l \}$ and $\{ \mb{u}' \in \pi' : \mb{u}' < \mb{w}' \}$ are not connected either: this would imply that $\Delta(\pi, \pi')$ is not connected. Thus $\mb{u}_{i-1} < \mb{w}$ is impossible, and therefore $\mb{w}' \not \in U(\mb{u}_{i-1})$. This implies that
\[
|(\pi' \backslash \pi) \cap U(\mb{w}')| > |(\pi' \backslash \pi) \cap U(\mb{u}_{i-1})|.
\]
But the left hand side is how much weight would be picked up along $\pi' \backslash \pi$ by staying on $\pi'$ after $\mb{v}'$, while the right hand side is the weight that would be picked up along $\pi \backslash \pi'$ by switching from $\mb{v}'$ to $\mb{u}_i$, since
\[ 
|(\pi' \backslash \pi) \cap U(\mb{u}_{i-1})| = \sum_{j=i}^\ell |(\pi' \backslash \pi) \cap U(\mb{u}_{j-1})| - |(\pi' \backslash \pi) \cap U(\mb{u}_j)| = \sum_{j=i}^{\ell} \omega(\mb{u}_j). 
\]
Thus it is more profitable to stay on $\pi'$ than to switch to $\pi$.

Together, these two facts imply that $\pi'$ is the unique longest path, and therefore that $\omega \in \mc{C}(\pi;\pi-\pi')\setminus \mc{C}(\pi)$.
\end{proof}

\begin{remark}
 A more standard way of proving Theorem \ref{thm:faces} would be to use the dual cone
\[
 \mc{C}(\pi)^* = \{ \xi \in \R^P : \langle \xi, \omega \rangle \geq 0 \textrm{ for all } \omega \in \mc{C}(\pi)  \},
\]
which defines the set of half-spaces containing $\mc{C}(\pi)$ (through their normal vectors). The extreme rays of $\mc{C}(\pi)^*$ are exactly the normals to the facets of $\mc{C}(\pi)$, and arguments similar to those in Section \ref{sec:extreme_rays} can be used to verify which vectors are extreme rays of the dual cone. We chose the exposition above since it is more in the spirit of the last passage model, but the duality argument is also useful in its own right. For example, it makes clear the assertion that $\eta$ is necessary iff $\mc{C}(\pi) \subsetneq \mc{C}(\pi; \eta)$, since on the dual side it corresponds to the statement that a cone becomes smaller when an extreme ray is removed from it. Moreover, the duality argument in this case is made much simpler by the nature of the extreme rays to $\mc{C}(\pi)$, as proved in Theorem \ref{thm:extreme_rays}. Indeed, using that theorem the dual cone can be rewritten as
\begin{align}\label{eqn:dual_C_pi}
 \mc{C}(\pi)^* = \{ \xi \in \R^P : \langle \xi, a \rangle \geq 0 \textrm{ for every antichain } a \textrm{ with } \supp(a) \cap \supp(\pi) \neq \emptyset \}.
\end{align}
The only candidates for extreme rays of the dual cone are vectors of the form $\xi = \delta_{\mb{v}}$ or $\xi = \pi - \pi'$, and in that case it is clear that $\langle \xi, a \rangle$ is always either $0$ or $1$, for any antichain $a$ that intersects $\pi$ exactly once. Using this fact, one can show that a vector $\xi$ of the type either $\delta_{\mb{v}}$ or $\pi - \pi'$ is an extreme ray of $\mc{C}(\pi)^*$ if and only if
\begin{align}\label{eqn:dual_extreme_ray_condition}
 \bigcap_{\substack{a \in \mcr{ER}(\pi) :\\ \langle \xi, a \rangle = 0}} \!\! \Span(a)^{\perp} = \Span(\xi),
\end{align}
where $\mcr{ER}(\pi)$ is the set of extreme rays of $\mc{C}(\pi)$. This simple fact can be used to make a relatively quick determination of the extreme rays of the dual cone. Specifically, for the $\xi$ of the form $\pi - \pi'$ the relevant antichains are those which are supported on one vertex of $\pi \backslash \pi'$ and one vertex of $\pi' \backslash \pi$, with the two vertices being out of order to maintain the antichain condition. This forces that any vector in the left hand side of \eqref{eqn:dual_extreme_ray_condition} must have entries with opposite values at each such pair of vertices, and thus on each connected component of $\Delta(\pi, \pi')$ the entries on each partite set within the component must be the same, and the negative of the common entry on the other partite set.  
\end{remark}

\section{Two-Dimensional Faces of Maximal Cones \label{sec:two_dim_faces}}

Fix a path $\pi \in \Pi_P$. In this section we will describe when two extreme rays $a_1, a_2 \in \mcr{ER}(\pi)$ form a two-dimensional face of $\mc{C}(\pi)$. There are several equivalent definitions of what this means, and we will use one that has a description in terms of the last passage problem. Clearly since $a_1$ and $a_2$ are in $\mc{C}(\pi)$ so too is their sum $a_1 + a_2$. Then $a_1$ and $a_2$ form a two-dimensional face precisely when there are \textit{only} two linearly independent directions from which one can perturb away from $a_1 + a_2$ and remain in the cone $\mc{C}(\pi)$. Formally this means
\begin{align}\label{eqn:two_dim_face}
a_1, a_2 \textrm{ form a two-dimensional face of } \mc{C}(\pi) \iff \mc{D}_{\pi}(a_1 + a_2) = \Span \{ a_1, a_2 \}.
\end{align}
We will assume throughout that $a_1$ and $a_2$ are distinct, otherwise they clearly don't form a two-dimensional face. Since they are extreme rays this means there must be at least one vertex where $a_1$ or $a_2$ takes on the value one and the other is zero. It \textit{is} possible that there are vertices at which both $a_1$ and $a_2$ take the value one, but excluding the case $a_1 = a_2$ means that $\supp(a_1)$ and $\supp(a_2)$ are necessarily distinct. With this in mind we state the following result about edges. It relies on an object which we define next, called the \textbf{order graph}.

\begin{definition}\label{defn:order_graph}
For two extreme rays $a_1$ and $a_2$ of a maximal cone $\mc{C}(\pi)$, their \textit{order graph} is the bipartite graph $\mc{G}(a_1, a_2)$ with one part being the vertices in $\supp(a_1) \backslash \supp(a_2)$, the other part being the vertices in $\supp(a_2) \backslash \supp(a_1)$, and an edge between two vertices if they are in order.
\end{definition}

With this definition in hand the result is:

\begin{theorem}\label{thm:edges}
Two extreme vectors $a_1, a_2$ of $\mc{C}(\pi)$ form a two-dimensional face in the cone $\mc{C}(\pi)$ iff their order graph $\mc{G}(a_1, a_2)$ is connected.
\end{theorem}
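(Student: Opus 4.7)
The plan is to analyze the perturbation space $\mc{D}_\pi(\omega)$ at $\omega := a_1+a_2$ directly and show that it equals $\Span\{a_1,a_2\}$ exactly when $\mc{G}(a_1,a_2)$ is connected. Write $S_1 = \supp(a_1)\setminus\supp(a_2)$, $S_2 = \supp(a_2)\setminus\supp(a_1)$, $S_{12} = \supp(a_1)\cap\supp(a_2)$. Because each $a_i$ is an antichain, every path satisfies $\langle\pi',a_i\rangle\in\{0,1\}$, so $\langle\pi',\omega\rangle\in\{0,1,2\}$, and the maximal paths (``double-hit'' paths) are exactly those intersecting both $\supp(a_1)$ and $\supp(a_2)$: either (A) through a single $\mb{w}\in S_{12}$, or (B) through one $\mb{u}\in S_1$ and one $\mb{u}'\in S_2$ that are comparable. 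Any such comparable pair extends to a maximal chain meeting $\supp(a_1)\cup\supp(a_2)$ only at those two vertices, because $\supp(a_i)$ is an antichain and the chain lies in a totally ordered part of $P$. Translating the perturbation condition, $\sigma\in\mc{D}_\pi(\omega)$ iff $\sigma$ is supported in $\supp(\omega)$ and $\langle\pi^*,\sigma\rangle$ equals a common value $c=\langle\pi,\sigma\rangle$ for every double-hit path $\pi^*$; equivalently,
\begin{equation*}
\sigma(\mb{w}) = c \ \text{for all}\ \mb{w}\in S_{12}, \qquad \sigma(\mb{u})+\sigma(\mb{u}') = c \ \text{for every edge}\ \{\mb{u},\mb{u}'\}\ \text{of}\ \mc{G}(a_1,a_2).
\end{equation*}

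For the ``$\Leftarrow$'' direction, assume $\mc{G}(a_1,a_2)$ is connected. The edge relations $\sigma(\mb{u})+\sigma(\mb{u}')=c$ propagate along alternating walks in the bipartite graph, forcing $\sigma|_{S_1}$ and $\sigma|_{S_2}$ to be constants $\alpha$ and $\beta$ respectively with $\alpha+\beta=c$; combined with $\sigma|_{S_{12}}=c=\alpha+\beta$, this gives $\sigma=\alpha a_1+\beta a_2$. The reverse inclusion is immediate since $\omega\pm\epsilon(\alpha a_1+\beta a_2) = (1\pm\epsilon\alpha)a_1+(1\pm\epsilon\beta)a_2\in\mc{C}(\pi)$ for $\epsilon$ small, so $\mc{D}_\pi(\omega)=\Span\{a_1,a_2\}$ and $a_1,a_2$ form a two-dimensional face.

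For the ``$\Rightarrow$'' direction, suppose $\mc{G}(a_1,a_2)$ has components $C_1,\ldots,C_k$ with $k\geq 2$. Choose scalars $\alpha_1,\ldots,\alpha_k$ not all equal and define $\sigma=\alpha_j$ on $C_j\cap S_1$, $\sigma=-\alpha_j$ on $C_j\cap S_2$, and $\sigma=0$ elsewhere. Every edge equation is satisfied with constant $0$, the $S_{12}$ constraint reads $\sigma|_{S_{12}}=0$, and $c=\langle\pi,\sigma\rangle=0$: indeed if $\pi$ meets $\mb{v}_1\in S_1$ and $\mb{v}_2\in S_2$ they are comparable along $\pi$, hence joined by an edge and lying in a common component $C_j$, giving $\sigma(\mb{v}_1)+\sigma(\mb{v}_2)=0$. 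Thus $\sigma\in\mc{D}_\pi(\omega)$, but the unequal $\alpha_j$ make $\sigma$ fail constancy on $S_1$ or on $S_2$, whence $\sigma\notin\Span\{a_1,a_2\}$ and $\dim\mc{D}_\pi(\omega)>2$.

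The main obstacle is the collection of degenerate configurations: $S_1$ or $S_2$ empty, or components $C_j$ lying entirely on one side of the bipartition. In the degenerate connected scenarios the $S_{12}$ constraint by itself already enforces the required constancy, while in the degenerate disconnected scenarios isolated one-color components supply enough freedom to escape $\Span\{a_1,a_2\}$ (e.g. a delta function at an isolated $S_2$-vertex). A minor bookkeeping point is that $\omega\pm\epsilon\sigma$ must stay in $\R_+^P$ and continue to satisfy the strict inequalities defining $\mc{C}(\pi)$: the former holds because $\omega>0$ on $\supp(\sigma)$, and the latter because non-double-hit paths have strict slack under $\omega$.
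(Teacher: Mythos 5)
Your proof is correct, and it reaches the result by a route that is recognizably related to the paper's but structurally different.

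The paper first proves (Lemma \ref{lem:ERD}) that $\mc{D}_\pi(\omega)$ is always spanned by those extreme rays of $\mc{C}(\pi)$ lying in it, and then (Lemmas \ref{lem:A}, \ref{lem:B}) characterizes exactly which $0$--$1$ antichain vectors $a$ lie in $\mc{D}_\pi(a_1+a_2)$ in terms of the order graph. The theorem is then proved by showing that connectedness forces any such $a$ to be $a_1$ or $a_2$, while disconnectedness lets one build a third. You instead bypass Lemma \ref{lem:ERD} entirely: you parameterize $\mc{D}_\pi(a_1+a_2)$ directly as the solution space of the linear system coming from the ``double-hit'' paths, namely $\supp(\sigma)\subset\supp(\omega)$, $\sigma(\mb{w})=c$ on $S_{12}$, and $\sigma(\mb{u})+\sigma(\mb{u}')=c$ on edges of $\mc{G}(a_1,a_2)$, and then run the propagation argument on real-valued $\sigma$ rather than on $0$--$1$ extreme rays. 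This is more elementary (you never need the general fact that a perturbation space is the span of the incident extreme rays), at the modest cost of having to keep the free constant $c$ in play; the paper's route avoids $c$ because it only ever deals with $0$--$1$ vectors where everything is forced to equal $1$. Your identification of the double-hit paths as exactly those of type (A) through $S_{12}$ or type (B) through an order-graph edge, together with the observation that every edge and every $S_{12}$-vertex is realized by some maximal chain, is the same content as the paper's Lemma \ref{lem:B}, just established directly.

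Two small points worth tightening. First, your parenthetical that a delta function at an isolated $S_2$-vertex $\mb{u}'$ witnesses $\dim\mc{D}_\pi(\omega)>2$ is correct but needs one more line: you must verify that $\pi$ itself does not pass through $\mb{u}'$, otherwise $c=\langle\pi,\sigma\rangle=1\neq 0=\sigma|_{S_{12}}$ and the constraint fails. This does hold (if $\pi$ hit $\mb{u}'\in S_2$, its unique hit of $\supp(a_1)$ would be comparable to $\mb{u}'$ and hence give $\mb{u}'$ an edge), but it should be said. Second, in the disconnected construction one should note that at least one component $C_j$ meets both $S_1$ and $S_2$ (equivalently $\mc{G}(a_1,a_2)$ has an edge), which rules out the possibility that your family of $\sigma$'s accidentally collapses into $\Span\{a_1,a_2\}$; this follows from $\pi$ being a double-hit path and is worth stating explicitly rather than sweeping into the ``degenerate configurations'' remark.
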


Observe that the order graph and the theorem don't make any reference to the vertices in $\supp(a_1) \cap \supp(a_2)$. Those vertices would only make trivial changes to the order graph. Since they appear in both extreme rays the natural choice would be to include the vertex in both partite sets with an edge between them. However, such vertices cannot be in order with any of the other vertices from either $a_1$ or $a_2$ since they are extreme rays. Therefore they would only appear in the graph as isolated components, and we will see in the proof that their presence would only make trivial changes to the statement of the theorem.

Also note that because $a_1 \neq a_2$ the order graph is always non-empty, but it is possible that one of the parts $\supp(a_1) \backslash \supp(a_2)$ or $\supp(a_2) \backslash \supp(a_1)$ is empty. In that case the order graph is connected iff the non-empty part consists of exactly one element. Thus it is always possible to build two-dimensional faces of $\mc{C}(\pi)$ by picking an arbitrary $a_1 \in \mcr{ER}(\pi)$ and then forming $a_2$ by adding one element of $P$ that is out of order with $\supp(a_1)$, if such an element exists. Adding more than one out of order element will \textit{not} form a two-dimensional face. As the theorem shows, however, not all two-dimensional faces come about from this type of construction.

In the rest of this section we prove Theorem \ref{thm:edges}. We begin with simplifications of \eqref{eqn:two_dim_face}.

\begin{lemma}\label{lem:ERD}
\begin{align*}
\mc{D}_{\pi}(\omega) = \operatorname{Span} \left \{ a \in \mcr{ER}(\pi) : \, \exists \, \epsilon > 0 \textrm{ s.t. } \omega \pm \epsilon a \in \mc{C}(\pi) \right \}.
\end{align*}
\end{lemma}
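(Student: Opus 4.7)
The plan is to prove the equality by establishing both inclusions. The inclusion $\supseteq$ is essentially immediate from the preceding lemma stating that $\mc{D}_\pi(\omega)$ is a linear subspace: any $a \in \mcr{ER}(\pi)$ satisfying $\omega \pm \epsilon a \in \mc{C}(\pi)$ lies in $\mc{D}_\pi(\omega)$ directly from the definition, so the linear span of all such rays is contained in $\mc{D}_\pi(\omega)$.

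For the nontrivial inclusion $\subseteq$, I would start with $\sigma \in \mc{D}_\pi(\omega)$ and pick $\epsilon > 0$ with both $\omega + \epsilon\sigma$ and $\omega - \epsilon\sigma$ in $\mc{C}(\pi)$. The key step is to invoke the V-decomposition from Theorem \ref{thm:extreme_rays}: since $\mc{C}(\pi)$ is positively spanned by $\mcr{ER}(\pi)$, one can write
\[
\omega + \epsilon\sigma = \sum_i \alpha_i a_i, \qquad \omega - \epsilon\sigma = \sum_j \beta_j b_j,
\]
with $\alpha_i, \beta_j \geq 0$ and $a_i, b_j \in \mcr{ER}(\pi)$. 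Averaging yields $\omega = \tfrac12 \sum_i \alpha_i a_i + \tfrac12 \sum_j \beta_j b_j$, while the difference yields $\sigma = \tfrac{1}{2\epsilon}\bigl(\sum_i \alpha_i a_i - \sum_j \beta_j b_j\bigr)$, expressing $\sigma$ as a linear combination of the same list of extreme rays.

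The crucial observation — which I regard as the only substantive point and not really an obstacle — is that each extreme ray appearing with positive coefficient in the averaged representation of $\omega$ is automatically a legitimate perturbation direction. Concretely, if $\alpha_i > 0$ then for any $0 < \delta \leq \alpha_i/2$ the vector $\omega - \delta a_i$ is still a non-negative combination of elements of $\mcr{ER}(\pi)$ (just decrease the coefficient of $a_i$ in the averaged expression for $\omega$), hence lies in $\mc{C}(\pi)$; the vector $\omega + \delta a_i$ is obviously in $\mc{C}(\pi)$ as well since we simply increase a non-negative coefficient. The same reasoning applies to each $b_j$. Consequently every $a_i$ and $b_j$ appearing in the decomposition of $\sigma$ satisfies the perturbation property, and $\sigma$ is in their span, completing the proof. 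No combinatorics of the poset or the disorder graph enters — the argument is purely a consequence of the V-decomposition and the elementary fact that perturbing a positive combination along one of its generators preserves positivity of the remaining coefficients.
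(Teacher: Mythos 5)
Your proof is correct and follows essentially the same route as the paper's: both take the conical decompositions of $\omega \pm \epsilon\sigma$ into extreme rays, average to express $\omega$ as a strictly positive combination of the combined set of rays, observe that each such ray is therefore a legitimate two-sided perturbation direction, and note that $\sigma$ lies in the span of those rays. The only difference is cosmetic — you spell out explicitly why a ray with positive coefficient in $\omega$'s decomposition satisfies the perturbation property, a step the paper leaves implicit.
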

\begin{proof}
Define the set 
\[ E = \left \{ a \in \mcr{ER}(\pi) : \, \exists \, \epsilon > 0 \textrm{ s.t. } \omega \pm \epsilon a \in \mc{C}(\pi) \right \}.\]
Clearly, $\operatorname{Span}(E)\subset \mc{D}_{\pi}(\omega)$. Now suppose $\omega\in \mc{C}(\pi)$ and $\omega \pm \epsilon \sigma \in \mc{C}(\pi)$. There exist subsets $F,G \subset \mcr{ER}(\pi)$ such that $\omega +\epsilon \sigma$ and $\omega -\epsilon \sigma$ are strictly positive linear combinations of all extreme rays in $F$ and $G$ respectively. But then $\omega$, as the mean of these two vectors, will be a strictly positive combination of all elements of $F\cup G$. Therefore, $F\cup G\subset E$, so $\sigma \in \operatorname{Span}(E)$.
\end{proof}

\begin{remark}
Suppose $a_1, a_2\in \mcr{ER}(\pi)$. Clearly $a_1$ and $a_2$ are always in $\mc{C}(\pi)$. Therefore, using Lemma \ref{lem:ERD}, $a_1$ and $a_2$ forming a face is equivalent to the fact that for all $a\in \mcr{ER}(\pi)$
\begin{align*}
a_1 + a_2 \pm \epsilon a \in \mc{C}(\pi) \iff a = a_1 \textrm{ or } a_2,
\end{align*}
or, more succinctly, that $\mcr{ER}(\pi) \cap D_{\pi}(a_1 + a_2) = \{ a_1, a_2 \}$.
\end{remark}

Using the last remark and the properties of extreme rays it is possible to greatly reduce the number of extreme rays $a \in \mcr{ER}(\pi)$ that are possibly in $\mc{D}_{\pi}(a_1 + a_2)$.

\begin{lemma}\label{lem:A}
Let $a$ be an extreme ray of $\mc{C}(\pi)$. Then $a \in \mc{D}_{\pi}(a_1 + a_2)$ iff $a$ is zero at all vertices where both $a_1$ and $a_2$ are (i.e. $\supp(a) \subset \supp(a_1) \cup \supp(a_2) = \supp(a_1 + a_2)$) and any path that is maximal for $a_1 + a_2$, is also maximal for $a$.
\end{lemma}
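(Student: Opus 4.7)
The plan is to prove both directions directly from the definition of $\mc{D}_\pi$: namely, $a \in \mc{D}_\pi(a_1+a_2)$ precisely when there exists $\epsilon>0$ such that $a_1+a_2\pm \epsilon a$ both (i) lie in $\R_+^P$ and (ii) still have $\pi$ as a longest path.

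For the forward direction, suppose $a \in \mc{D}_\pi(a_1+a_2)$ with witnessing $\epsilon>0$. For the support statement, if there were some $\mb{v}$ with $a(\mb{v})\neq 0$ but $(a_1+a_2)(\mb{v})=0$, then one of $a_1+a_2\pm\epsilon a$ would be strictly negative at $\mb{v}$, contradicting membership in $\mc{C}(\pi)\subset \R_+^P$. For the maximality statement, let $\pi'$ be any path that is longest under $a_1+a_2$, so that $\langle a_1+a_2, \pi'\rangle=\langle a_1+a_2,\pi\rangle$. Since $\pi$ is longest under both $a_1+a_2+\epsilon a$ and $a_1+a_2-\epsilon a$, subtracting the inequalities $\langle a_1+a_2\pm \epsilon a,\pi\rangle\geq \langle a_1+a_2\pm \epsilon a,\pi'\rangle$ yields $\langle a,\pi\rangle\geq \langle a,\pi'\rangle$ and $\langle a,\pi\rangle\leq \langle a,\pi'\rangle$, hence equality. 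Because $a\in\mc{C}(\pi)$, $\pi$ itself is longest under $a$, and therefore $\pi'$ is also longest under $a$.

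For the reverse direction, assume $\supp(a)\subset \supp(a_1+a_2)$ and every path longest under $a_1+a_2$ is also longest under $a$. First, since $a$ takes only values in $\{0,1\}$ and $(a_1+a_2)(\mb{v})>0$ whenever $a(\mb{v})=1$, there is an $\epsilon_1>0$ (e.g.\ $\epsilon_1=\min_{\mb{v}\in \supp(a)}(a_1+a_2)(\mb{v})$) with $a_1+a_2\pm\epsilon a\geq 0$ for all $0<\epsilon\leq \epsilon_1$. Next, partition the paths into those that are longest under $a_1+a_2$ and those that are not. For the first class, the hypothesis together with $\pi\in\mc{C}(\pi)\cap \mc{C}(a)$ gives $\langle a,\pi'\rangle=\langle a,\pi\rangle$ and $\langle a_1+a_2,\pi'\rangle=\langle a_1+a_2,\pi\rangle$, so $\pi$ and $\pi'$ tie under $a_1+a_2\pm\epsilon a$ for every $\epsilon$. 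For the second class, $\langle a_1+a_2,\pi\rangle-\langle a_1+a_2,\pi'\rangle$ is strictly positive, and since $\Pi_P$ is finite we may set
\[
\epsilon_2 \;=\; \min_{\pi'\text{ not longest under }a_1+a_2}\;\frac{\langle a_1+a_2,\pi\rangle-\langle a_1+a_2,\pi'\rangle}{1+|\langle a,\pi'\rangle-\langle a,\pi\rangle|}>0,
\]
which ensures $\langle a_1+a_2\pm\epsilon a,\pi\rangle\geq\langle a_1+a_2\pm\epsilon a,\pi'\rangle$ for every $\epsilon\leq \epsilon_2$ and every such $\pi'$. Taking $\epsilon=\min(\epsilon_1,\epsilon_2)$ yields $a_1+a_2\pm\epsilon a\in \mc{C}(\pi)$, proving $a\in \mc{D}_\pi(a_1+a_2)$.

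The argument is essentially bookkeeping; the only subtlety is handling non-maximal paths, where one must exploit the finiteness of $\Pi_P$ to choose a single $\epsilon$ that works uniformly. The conceptual content is that an extreme-ray perturbation can be added to $a_1+a_2$ only when it cannot create a strictly longer competitor (ruling out paths that gain weight from $a$) and only when it cannot take any weight negative (forcing $\supp(a)\subset \supp(a_1+a_2)$).
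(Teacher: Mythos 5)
Your proof is correct and follows essentially the same route as the paper's: both directions hinge on the same manipulations of $\langle a_1+a_2\pm\epsilon a,\,\pi-\pi'\rangle$, with the forward direction squeezing $\langle a,\pi\rangle=\langle a,\pi'\rangle$ and the reverse direction choosing $\epsilon$ small enough to keep non-maximal competitors behind. Your version is merely more explicit about the uniform choice of $\epsilon$ over the finitely many non-maximal paths, which the paper leaves implicit.
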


\begin{proof}
First suppose $a \in \mc{D}_{\pi}(a_1 + a_2)$. Then the vector $a_1 + a_2 - \epsilon a$ must be non-negative at every vertex. Thus if $a_1$ and $a_2$ are both zero at some vertex then so must be $a$.

For the second part, suppose that $\pi'$ is a maximal path for $a_1+a_2$. Note that $\pi$ is a maximal path for all the vectors $a_1+a_2, a$ and $a_1+a_2\pm \epsilon a$. So using that $\ip{a_1+a_2}{\pi-\pi'}=0$ and $\ip{a_1+a_2\pm\epsilon a}{\pi-\pi'}\geq 0$, we see that $\ip{a}{\pi'}=\ip{a}{\pi}$. This shows that $\pi'$ is also a maximal pah for $a$.

For the reversed statement, suppose that $\supp(a) \subset \supp(a_1) \cup \supp(a_2)$ and that all paths maximal for $a_1+a_2$ are also maximal for $a$. Clearly, for $\epsilon$ small enough, $a_1+a_2-\epsilon a$ is still non-negative at every vertex. Now consider a path $\pi'$. If $\ip{a_1+a_2}{\pi}>\ip{a_1+a_2}{\pi'}$, then again for $\epsilon$ small enough and for all such $\pi'$ we would have
\[ \ip{a_1+a_2\pm\epsilon a}{\pi-\pi'}>0.\]
Now suppose that $\ip{a_1+a_2}{\pi'}=\ip{a_1+a_2}{\pi}$, so $\pi'$ is a maximal path for $a_1+a_2$. Then it is also maximal for $a$, so we get
\[ \ip{a_1+a_2\pm\epsilon a}{\pi-\pi'}=0.\]
This shows that indeed $a\in \mc{D}_\pi(a_1+a_2)$.
\end{proof}

In geometric terms the previous lemma can be recast as saying that $a \in \mcr{ER}(\pi) \cap \mc{D}_{\pi}(a_1 + a_2)$ iff $\supp(a) \subset \supp(a_1 + a_2)$ and
\[
a \in \!\!\!\!\!\!\!\!\! \bigcap_{\substack{\pi' \in \Pi_P : \\ \ip{a_1 + a_2}{\pi - \pi'} = 0}} \!\!\!\!\!\!\!\!\! \Span(\pi - \pi')^{\perp}.
\]
However, it is simpler to recast it in terms of the order graph.

\begin{lemma}\label{lem:B}
Let $a$ be an extreme ray of $\mc{C}(\pi)$. Then $a \in \mc{D}_{\pi}(a_1 + a_2)$ iff
\begin{enumerate}
    \item $\supp(a) \subset \supp(a_1) \cup \supp(a_2)$.
    \item For every edge $(\mb{v}, \mb{w}) \in \mc{G}(a_1, a_2)$ either $a(\mb{v}) = 1$ or $a(\mb{w}) = 1$ (but not both).
    \item If $\mb{v} \in \supp(a_1) \cap \supp(a_2)$ then $a(\mb{v}) = 1$.
\end{enumerate}
\end{lemma}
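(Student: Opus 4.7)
The plan is to invoke Lemma \ref{lem:A}, which characterizes $a \in \mc{D}_{\pi}(a_1+a_2)$ as the conjunction of (a) $\supp(a) \subset \supp(a_1) \cup \supp(a_2)$, and (b) every path maximal for $a_1+a_2$ is also maximal for $a$. Clause (a) is exactly condition (1), so the task reduces to translating (b) into the combination of (2) and (3).

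First I would describe the paths that are maximal for $a_1+a_2$. Since $\pi$ is maximal for each $a_i$ and each $\supp(a_i)$ is an antichain meeting any chain in at most one vertex, $\langle a_1+a_2,\pi\rangle = 2$. Hence a path $\pi'$ is maximal for $a_1+a_2$ iff it meets both $\supp(a_1)$ and $\supp(a_2)$. Any such $\pi'$ either passes through some $\mb{v}\in\supp(a_1)\cap\supp(a_2)$, or through distinct vertices $\mb{v}_1\in\supp(a_1)\setminus\supp(a_2)$ and $\mb{v}_2\in\supp(a_2)\setminus\supp(a_1)$ which, lying on one chain, are in order and thus give an edge of $\mc{G}(a_1,a_2)$. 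Conversely, every common-support vertex and every order-graph edge is realized by some maximal chain, so these ``test paths'' exactly probe clause (b).

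For the forward direction I would exploit the antichain structure of $\supp(a_i)$. Given $\mb{v}\in\supp(a_1)\cap\supp(a_2)$, pick any maximal chain $\pi'$ through $\mb{v}$; any point of $\pi'\cap\supp(a_i)$ shares the chain $\pi'$ with $\mb{v}\in\supp(a_i)$ and so must equal $\mb{v}$, forcing $\pi'\cap(\supp(a_1)\cup\supp(a_2))=\{\mb{v}\}$. Since $\supp(a)$ lies in this union and $\pi'$ must meet $\supp(a)$ by (b), we conclude $a(\mb{v})=1$, giving (3). The same antichain reasoning applied to a maximal chain through an edge $(\mb{v}_1,\mb{v}_2)$ of $\mc{G}(a_1,a_2)$ forces $a(\mb{v}_1)=1$ or $a(\mb{v}_2)=1$; the ``not both'' clause in (2) is immediate, since $\supp(a)$ is itself an antichain while $\mb{v}_1$ and $\mb{v}_2$ are in order.

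For the reverse direction, suppose (1), (2), (3) hold and let $\pi'$ be maximal for $a_1+a_2$, with unique intersections $\mb{v}_1\in\pi'\cap\supp(a_1)$ and $\mb{v}_2\in\pi'\cap\supp(a_2)$. If either $\mb{v}_i$ lies in $\supp(a_1)\cap\supp(a_2)$, condition (3) immediately puts it in $\supp(a)$; otherwise $\mb{v}_1\neq \mb{v}_2$ lie on opposite sides of the bipartition of $\mc{G}(a_1,a_2)$ and are joined by an edge there (they share the chain $\pi'$), so (2) supplies a vertex of $\supp(a)\cap\pi'$. Either way $\pi'\cap\supp(a)\neq\emptyset$ so $\pi'$ is maximal for $a$. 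The only slight obstacle is the bookkeeping between $\supp(a_1)\cap\supp(a_2)$, the two symmetric differences, and $\supp(a)$ when these overlap; once the correspondence between maximal paths of $a_1+a_2$ and the pair (common support, edges of $\mc{G}(a_1,a_2)$) is set out explicitly, the antichain arguments handle every case uniformly.
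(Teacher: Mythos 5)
Your proof is correct and follows essentially the same route as the paper: invoke Lemma \ref{lem:A} to reduce the claim to characterizing the paths that are maximal for $a_1+a_2$, observe via the antichain structure that these are exactly the paths meeting both $\supp(a_1)$ and $\supp(a_2)$ (through either a common vertex or an edge of $\mc{G}(a_1,a_2)$), and then translate this back and forth into conditions (2) and (3). The only stylistic difference is that you describe the maximal paths for $a_1+a_2$ once upfront, whereas the paper repeats the observation in each direction; the underlying argument is identical.
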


\begin{proof}
Suppose $a\in \mc{D}_{\pi}(a_1 + a_2)$. The first condition follows immediately from Lemma \ref{lem:A}. Now suppose $(\mb{v}, \mb{w}) \in \mc{G}(a_1, a_2)$. This means that $\mb{v}$ and $\mb{w}$ are in order, and hence there exists a path $\pi'$ passing through both $\mb{v}$ and $\mb{w}$. Therefore $\pi'$ is maximal under $a_1 + a_2$ (a path can pick up at most one vertex of an extreme ray, since all its vertices are out of order). But then by Lemma \ref{lem:A} $\pi'$ is also maximal under any $a \in \mcr{ER}(\pi) \cap \mc{D}_{\pi}(a_1 + a_2)$. Since $\pi'$ can't pass through any other vertices from $\supp(a_1) \cup \supp(a_2)$ other than $\mb{v}$ or $\mb{w}$, $a$ must be non-zero at either $\mb{v}$ or $\mb{w}$ in order for $\pi'$ to be maximal under $a$ (here we use Condition 1). However $a$ can't be non-zero at both, otherwise $a$ would be not be extreme.

For $\mb{v} \in \supp(a_1) \cap \supp(a_2)$ any path that passes through $\mb{v}$ is longest under $a_1 + a_2$, and hence must also be longest under $a$. But this implies that $a(\mb{v}) = 1$.

Now we wish to prove the reverse implication. Suppose $\pi'$ is maximal for $a_1+a_2$. Since $\pi$ is maximal for both $a_1$ and $a_2$, either there exists an edge in $\mc{G}(a_1, a_2)$ or $\supp(a_1)\cap \supp(a_2) \neq \emptyset$. In both cases we would have that $\ip{\pi'}{a_1+a_2}=2$ (otherwise $\pi'$ would not be maximal). If $\pi'$ picks up two vertices $\supp(a_1) \cup \supp(a_2)$, then Condition 2 implies that $\ip{\pi'}{a}=1$, so $\pi'$ is maximal for $a$. If $\pi'$ picks up a vertex $\mb{v}$ in $\supp(a_1)\cap \supp(a_2)$, then Condition 3 implies that $a(\mb{v})=1$, so also in that case $\pi'$ is maximal for $a$. Now Lemma \ref{lem:A} implies that $a\in \mc{D}_{\pi}(a_1 + a_2)$.
\end{proof}

This allows us to finish the proof of the theorem.

\begin{proof}[Proof of Theorem \ref{thm:edges}]
First assume that $a_1$ and $a_2$ form a two-dimensional face. Then $a \in \mc{D}_{\pi}(a_1 + a_2)\cap \mcr{ER}(\pi)$ implies that $a = a_1$ or $a = a_2$. Suppose $\mc{C}\subset \mc{G}(a_1,a_2)$ is a connected component, strictly smaller than $\mc{G}(a_1,a_2)$. Define $a\in \R^P$ in the following way: $a(\mb{v})=1$ for all $\mb{v}\in \supp(a_1)\cap \supp(a_2)$, all $\mb{v}\in \mc{C}\cap \supp(a_1)$ and all $\mb{v}\in (\mc{G}(a_1,a_2)\setminus \mc{C})\cap \supp(a_2)$. For all other $\mb{v}\in P$ $a(\mb{v})=0$. Note that $a\in \mcr{ER}(\pi)$: two vertices in $\supp(a)$ cannot be in order, since all elements of $\mc{C}$ are out of order with all elements of $\mc{G}(a_1,a_2)\setminus \mc{C}$. Furthermore, define $\mb{v}_i$ (for $i=1,2$) as the unique element of $\supp(\pi)\cap \supp(a_i)$. If $\mb{v}_1=\mb{v}_2$, then $\mb{v}_1\in \supp(a_1)\cap\supp(a_2)$, so $a(\mb{v}_1)=1$. If $\mb{v}_1\neq \mb{v}_2$, then they are in order, so they are either both in $\mc{C}$ (so $a(\mb{v_1})=1$), or both in $\mc{G}(a_1,a_2)\setminus \mc{C}$ (so $a(\mb{v}_2)=1$). In all of these cases we have that $\ip{\pi}{a}=1$. It is also clear that $a$ satisfies the three conditions of Lemma \ref{lem:B}, so we conclude that $a\in \mc{D}_\pi(a_1+a_2)\cap \mcr{ER}(\pi)$. However, $a$ cannot be equal to $a_1$ or $a_2$: there exists at least one vertex $\mb{v}\in \mc{C}$. If $\mb{v}\in \supp(a_1)\setminus \supp(a_2)$, then $a(\mb{v})=1$. If $\mb{v}\in \supp(a_2)\setminus \supp(a_1)$, then $a(\mb{v})=0$. In both cases we have $a\neq a_2$. Since there exists also at least one vertex in $\mc{G}(a_1,a_2)\setminus \mc{C}$, we also see that $a\neq a_1$. We now contradict the assumption we started with, so $\mc{G}(a_1,a_2)$ must be connected.

For the reverse statement, we use a similar approach. Suppose $a\in \mc{D}_\pi(a_1+a_2)\cap \mcr{ER}(\pi)$. Then $a$ must satisfy the three conditions of Lemma \ref{lem:B}. There also must be at least one vertex $\mb{v}\in \mc{G}(a_1,a_2)$. Suppose $\mb{v}\in \supp(a_1)\setminus \supp(a_2)$ (the other case follows the same arguments). Suppose $a(\mb{v})=1$. Since $\mc{G}(a_1,a_2)$ is a connected bipartite graph and $a$ satisfies Condition 2 of Lemma \ref{lem:B}, it follows that $a$ must be $0$ on $\supp(a_2)\setminus \supp(a_1)$ and $1$ on $\supp(a_1)\setminus \supp(a_2)$, and therefore (together with Condition 3 of Lemma \ref{lem:B}) $a=a_1$. If, on the other hand, $a(\mb{v})=0$, then by the same reasoning we would have that $a=a_2$. This proves that $a_1$ and $a_2$ form a two-dimensional face.
\end{proof}

\begin{remark}
We note that the definition of the order graph seems to somehow be ``dual'' to the definition of the disorder graph, especially one if considers paths and antichains as being in duality. However the order graph describes dimension 2 faces of the maximal cones, while the disorder graph describes the codimension 1 facets. We are unsure of the exact reason for this discrepancy.
\end{remark}

\section{A Simplicial Decomposition of Maximal Sets \label{sec:simplicial_decomp}}

Recall that a polyhedral cone is simplicial if its extreme rays form a basis for the ambient space. The major consequence of this fact is that every point in the cone can be uniquely written as a positive, linear combination of the extreme rays, whereas the uniqueness fails for a non-simplicial cone. Put another way, a simplicial cone is the image of a one-to-one linear transformation of the positive orthant of the ambient Euclidean space (the map that sends the standard basis vectors to the extreme rays of the cone), and as a result integrals over the simplicial cone can be transformed into integrals over the positive orthant. The results of Section \ref{sec:extreme_rays} show that the number of extreme rays of a maximal cone $\mc{C}(\pi)$ is typically much larger than the dimension $|P|$ of the Euclidean space that the problem is embedded into, and hence the cones are far from simplicial. However, computations can be made tractable by partitioning the polyhedral cone into a disjoint union of simplices (disjoint up to measure zero boundary intersections), and general theory ensures that such a partition always exists. In fact, it is always possible to find a decomposition such that the extreme rays of every simplical cone are also extreme rays of the original cone. In this section we describe a general scheme for finding such a decomposition for the last passage model. Although we found this scheme independently, it already appears in \cite{stanley:two_poset_polytopes}.

The key is to consider the set of upper sets of the poset, sometimes also called the \textit{order ideals}. Recall that $U \subset P$ is an upper set if $\mb{v} \in U$ and $\mb{v} \leq \mb{u}$ implies $\mb{u} \in U$. Further recall that $\bd U$ is the set of minimal elements of $U$, which clearly forms an antichain, and this establishes a bijection between upper sets and antichains. In the context of the last passage model this bijection is very natural: if an antichain is the weight vector then its corresponding upper set is the vector of passage times, which encodes the maximal length up to each given vertex.

The set of all upper sets of $P$, ordered by inclusion, itself forms a poset called $J(P)$. It is well known and straightforward to see that $J(P)$ is in fact a \textit{distributive lattice}, meaning that any two elements of $J(P)$ have a unique least upper bound and greatest lower bound that are, in this case, given by the union and intersection of the elements, respectively. The unique minimal element of $J(P)$ is the empty set and the unique maximal element is $P$ itself. Furthermore, $J(P)$ is graded of rank $|P|$ and in this case the rank function $\rho(U)$ of $U \in J(P)$ is simply the cardinality of $U$. In particular the maximal chains of $J(P)$ all contain $|P| + 1$ elements and can all be written in the form $U_0 \lessdot U_1 \lessdot \ldots \lessdot U_{|P|}$ where each lower set $U_j$ contains exactly $j$ vertices in $P$. The maximal chains of $J(P)$ can be used to produce a simplicial decomposition of the maximal sets $\mc{C}(\pi)$. The construction is best explained through an object called the \textit{order cone} of $P$, which is a mild generalization of the order polytope of Stanley \cite{stanley:two_poset_polytopes}.

\begin{definition}
Define the \textit{order cone} $\mcr{OC}(P)$ of $P$ to be the subset of vectors in $\R_+^P$ that obey the ordering of the poset, i.e.
\[
 \mcr{OC}(P) := \left \{ \eta \in \R_+^P : \mb{v} \leq \mb{w} \implies \eta(\mb{v}) \leq \eta(\mb{w})  \right \}.
\]
It is straightforward to verifty that $\mcr{OC}(P)$ is a polyhedral cone, but also that it is a proper subset of $\R_+^P$. 
\end{definition}

\begin{theorem}\label{thm:simplicial_cones}
Let $U_0 \lessdot U_1 \lessdot \ldots \lessdot U_{|P|}$ be a maximal chain in $J(P)$. The conical combinations of the antichains $\bd U_1, \bd U_2, \ldots, \bd U_{|P|}$, embedded as vectors in $\R_+^P$, form a simplicial cone in $\R_+^P$. Moreover the set of all such simplicial cones forms a partition of $\R_+^P$ (up to measure zero boundaries).
\end{theorem}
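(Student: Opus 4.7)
The plan is to exhibit a piecewise-linear bijection $\R_+^P \to \mcr{OC}(P)$, transport the natural simplicial decomposition of the order cone back, and identify the extreme rays of each preimage cell with the antichains $1_{\bd U_i}$. Define
\[
 \Phi: \R_+^P \to \mcr{OC}(P), \qquad \Phi(\omega)(\mb{v}) := G_P(\mb{v};\omega),
\]
with inverse $\Psi(\eta)(\mb{v}) := \eta(\mb{v}) - \max_{\mb{u}\lessdot\mb{v}}\eta(\mb{u})$ (taken to be $\eta(\mb{v})$ if $\mb{v}$ is minimal). That $\Phi$ takes values in $\mcr{OC}(P)$ is immediate from the recursion and $\omega \geq 0$, which forces monotonicity along cover relations and hence on all of $P$; that $\Psi$ takes values in $\R_+^P$ is precisely the order-cone condition. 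The identities $\Psi\circ\Phi=\mathrm{id}$ and $\Phi\circ\Psi=\mathrm{id}$ are a straightforward induction on the rank of $\mb{v}$ using the recursion \eqref{eqn:recursion} and its inversion already stated in the introduction.

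Next I would decompose $\mcr{OC}(P)$. A maximal chain $U_0\lessdot U_1\lessdot\ldots\lessdot U_{|P|}$ in $J(P)$ is determined by the sequence $\mb{v}_i := U_i\setminus U_{i-1}$, which is necessarily a linear extension of the dual poset $P^\ast$ (the upper-set condition forces any $\mb{w}>\mb{v}$ to appear before $\mb{v}$). To the chain I associate the chamber
\[
 \mc{K}(U_\bullet) := \bigl\{\eta \in \R_+^P: \eta(\mb{v}_1)\geq \eta(\mb{v}_2)\geq \ldots\geq \eta(\mb{v}_{|P|})\geq 0 \bigr\}.
\]
Since $\mb{v}\leq\mb{w}$ in $P$ forces $\eta(\mb{v})\leq\eta(\mb{w})$, every $\eta\in\mcr{OC}(P)$ lies in $\mc{K}(U_\bullet)$ for \emph{some} linear extension (any decreasing rearrangement of its coordinates), and a generic $\eta$ lies in the interior of a unique one. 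Thus the $\mc{K}(U_\bullet)$ tile $\mcr{OC}(P)$ up to codimension-one tie-breaking boundaries. The telescoping identity
\[
 \eta = \sum_{i=1}^{|P|}\bigl(\eta(\mb{v}_i)-\eta(\mb{v}_{i+1})\bigr)\, 1_{U_i}, \qquad \eta(\mb{v}_{|P|+1}) := 0,
\]
realizes $\mc{K}(U_\bullet)$ as a simplicial cone with extreme rays $1_{U_1},\ldots,1_{U_{|P|}}$, since the telescoping coefficients are non-negative exactly on the chamber.

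Finally I would pull this decomposition back through $\Phi$. The key observation is that on $\Phi^{-1}(\mc{K}(U_\bullet))$ each maximum in the recursion is unambiguous: at every $\mb{v}$ the winning predecessor $\mb{u}\lessdot\mb{v}$ is the one appearing \emph{earliest} in the fixed linear extension $\mb{v}_1,\ldots,\mb{v}_{|P|}$, so $\Phi$ collapses to a triangular linear system and restricts to a \emph{linear} isomorphism $\Phi^{-1}(\mc{K}(U_\bullet))\to\mc{K}(U_\bullet)$. Hence the preimage is simplicial with extreme rays $\Phi^{-1}(1_{U_i})$. A direct computation gives $\Phi(1_{\bd U_i}) = 1_{U_i}$: for $\mb{v}\in U_i$ every maximal chain in $L(\mb{v})$ meets the antichain $\bd U_i$ in exactly one vertex (antichains are totally unordered, chains totally ordered, and $U_i$ being an upper set guarantees at least one hit), contributing length $1$; for $\mb{v}\notin U_i$ no element below $\mb{v}$ lies in $U_i$, so the length is $0$. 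Thus the extreme rays are exactly the $1_{\bd U_i}$, and because $\Phi$ is a global bijection with $\mcr{OC}(P)$ the preimage chambers tile $\R_+^P$ up to boundary.

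The main obstacle I expect is justifying that $\Phi$ is genuinely linear on each preimage chamber -- nested maxima could in principle introduce new breakpoints -- but once the linear extension freezes which predecessor wins each $\max$, the recursion unwinds deterministically and linearity is automatic. Everything else reduces to bookkeeping with the recursion and the telescoping identity above.
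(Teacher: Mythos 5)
Your proposal follows essentially the same route as the paper's proof: decompose the order cone $\mcr{OC}(P)$ into chambers indexed by maximal chains of $J(P)$ (equivalently, linear extensions), realize each chamber as a simplicial cone with extreme rays $1_{U_i}$ via the telescoping identity, and transport these back through the passage-time/weight bijection, which is linear on each chamber and sends $1_{U_i}\leftrightarrow 1_{\bd U_i}$. The paper phrases the transport in terms of the explicit linear map $U_i\mapsto\bd U_i$ (your $\Psi$ restricted to a chamber), while you work with $\Phi$; these are the same argument.

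One small overstatement to fix: it is \emph{not} true that for $\mb{v}\in U_i$ every maximal chain in $L(\mb{v})$ meets $\bd U_i$. For instance, if $P=\{a,b,c\}$ with $a<c$, $b<c$, $a\parallel b$, and $U_i=\{b,c\}$, then $\bd U_i=\{b\}$ but the chain $\{a,c\}\subset L(c)$ misses it. What the upper-set property actually gives is that \emph{some} maximal chain in $L(\mb{v})$ passes through an element of $\bd U_i$ (pick $\mb{u}\in\bd U_i$ with $\mb{u}\leq\mb{v}$ and a chain through $\mb{u}$), and any chain meets an antichain at most once; since $G_P(\mb{v};\cdot)$ is a max over chains, this weaker statement already yields $\Phi(1_{\bd U_i})(\mb{v})=1$, so the conclusion $\Phi(1_{\bd U_i})=1_{U_i}$ stands.
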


\begin{proof}
$J(P)$ forms a simplicial decomposition for $\mcr{OC}(P)$ in the following way: for any $\eta \in \mcr{OC}(P)$ and for $k = 1, 2, \ldots, |P|$ let $\mb{v}_k$ be the vertex at which $\eta$ achieves its $k^{th}$ smallest value. These $\mb{v}_k$ are well-defined so long as $\eta$ doesn't take on the same value at multiple vertices, which is Lebesgue almost all of $\mcr{OC}(P)$. Let $U^*_k = \{ \mb{v}_i : i \geq k \}$ for $k=1, \ldots, |P|$ and $U^*_0 = \emptyset$. Then each $U^*_k$ is an upper set of $P$, due to the $\eta$ obeying the order relation, and since $U^*_k \subset U^*_{k+1}$ with $U^*_{k+1} \backslash U^*_k = \{ \mb{v}_k \}$ it follows that $U^*_0 \lessdot U^*_1 \lessdot \ldots U^*_{|P|}$ is a maximal chain in $J(P)$. Thus each $\eta \in \mcr{OC}(P)$ produces a maximal chain in $J(P)$, and furthermore the set of vectors $\eta$ which produce any particular maximal chain forms a simplicial cone in $\mcr{OC}(P)$. It is in fact a canonical simplicial cone, since it is determined by an ordering of the $\eta$ variables, and as such the indicator functions of the upper sets $U^*_1, U^*_2, \ldots, U^*_{|P|}$ are its extreme rays. Since each vector in $\mcr{OC}(P)$ uniquely determines one of these simplicial cones (Lebesgue almost surely), it follows that we have a simplicial decomposition. Moreover this construction is reversible: every maximal chain $\{U_k\}_{k=0,\ldots,|P|}$ in $J(P)$ determines a sequence $\mb{v}_{k+1} := U_{k+1} \backslash U_k$ and the simplicial cone $\{ \eta \in \R_+^P : 0 \leq \eta(\mb{v}_1) \leq \eta(\mb{v}_2) \leq \ldots \leq \eta(\mb{v}_{|P|}) \}$ is in $\mcr{OC}(P)$.

To complete the proof we now associate each simplicial cone in $\mcr{OC}(P)$ to one in $\R_+^P$. Use the linear mapping that takes $U_i \to \bd U_i$ for $i=1,\ldots, |P|$, which is invertible as a transformation of $\R^P$ due to the bijection between upper sets and antichains. Hence it takes the simplex with extreme rays $U_i$ to a simplex with extreme rays $\bd U_i$, which is clearly in $\R_+^P$ since the $\bd U_i$ are. Note that it is a \textit{different} linear map for each maximal chain $\{U_k\}_{k=0,\ldots,|P|}$, and hence a different linear map applied to each simplex defined by the maximal chain. Each mapping is simply the map from the passage time vector back to the weight vector. Since each weight vector uniquely determines an ordering of the passage times (Lebesgue almost surely) one sees that disjoint simplices in $\mcr{OC}(P)$ are mapped to disjoint simplices in $\R_+^P$ (up to measure zero boundaries), and that in fact all of $\R_+^P$ is covered by these mappings from $\mcr{OC}(P)$ to $\R_+^P$. This completes the proof.
\end{proof}


Taking the above as an algorithm for producing simplicial cones, the next step is to associate them to the maximal sets $\mc{C}(\pi)$ for $\pi \in \Pi_P$. By Theorem \ref{thm:extreme_rays} it is clear that the simplicial cone of Theorem \ref{thm:simplicial_cones} is a subset of $\mc{C}(\pi)$ iff the support of each of its extreme rays $\bd U_i$ intersects the support of $\pi$. The next result shows that there is exactly one such path $\pi$, and that we can generate it directly from the sequence of extreme rays. The basic algorithm is to start at the element of $P$ with the longest passage time and then moving backwards to the element below it with the next longest passage time, repeating until arrival at a minimal element.

\begin{theorem}\label{thm:maximal_path}
Let $U_0 \lessdot U_1 \lessdot \ldots \lessdot U_{|P|}$ be a maximal chain in $J(P)$ and $\mc{C}$ be its associated simplicial cone. For $k = 1, 2, \ldots, |P|$ let $\mb{v}_k = U_k \backslash U_{k-1}$. Define an integer-valued sequence $g_i$ by $g_1 = 1$ and
\[
g_{j+1} = \min \left \{ k : \mb{v}_k \lessdot \mb{v}_{g_j} \right \},
\]
until reaching the first integer $m$ such that $\mb{v}_{g_m}$ is a minimal element of $P$. Then the reversed subsequence $\pi_i = \mb{v}_{g_{m-i+1}}$ is a path in $\Pi_P$, and moreover $\mc{C} \subset \mc{C}(\pi)$.
\end{theorem}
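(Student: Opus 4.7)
The plan has two parts, matching the two conclusions of the theorem. That $\pi\in\Pi_P$ is immediate from the algorithm: $\mb{v}_1$ is a maximal element of $P$, because it is added first when $U_0=\emptyset$ is extended to $U_1$, so no element can lie strictly above it; the terminal element $\mb{v}_{g_m}$ is a minimal element of $P$ by the termination condition; and each consecutive pair in the reversed sequence is a cover by the defining property of $g_{j+1}$.

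For $\mc{C}\subset\mc{C}(\pi)$, I would invoke Theorem \ref{thm:extreme_rays}: since the extreme rays of the simplicial cone $\mc{C}$ are the antichains $\bd U_1,\ldots,\bd U_{|P|}$, it suffices to show $|\pi\cap\bd U_k|=1$ for every $k$. The cleanest route is to pinpoint which $\mb{v}_{g_j}$ realizes this intersection: setting $g_{m+1}=\infty$, I claim $\pi\cap\bd U_k=\{\mb{v}_{g_j}\}$ whenever $g_j\leq k<g_{j+1}$. Using the identity $U_k=\{\mb{v}_1,\ldots,\mb{v}_k\}$ together with the upper-set property of the $U_i$ (yielding the useful implication $\mb{v}_a\leq\mb{v}_b\Rightarrow b\leq a$), the other algorithm-vertices are quickly ruled out: for $i>j$ we have $g_i>k$ so $\mb{v}_{g_i}\notin U_k$, and for $i<j$ the vertex $\mb{v}_{g_{i+1}}$ lies below $\mb{v}_{g_i}$ and inside $U_k$, so $\mb{v}_{g_i}$ is not minimal in $U_k$.

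The one non-trivial step, and the main obstacle, is verifying that $\mb{v}_{g_j}$ itself is minimal in $U_k$. I would argue by contradiction: suppose some $\mb{v}_l<\mb{v}_{g_j}$ with $l\leq k$ exists. The upper-set implication forces $l>g_j$, so $\mb{v}_l$ lies in the ``new'' part $U_k\setminus U_{g_j}$. By choosing, via a chain of covers from $\mb{v}_l$ up to $\mb{v}_{g_j}$ in $P$, some cover $\mb{v}_p\lessdot\mb{v}_{g_j}$ with $\mb{v}_l\leq\mb{v}_p$, the same upper-set implication yields $p\leq l$, while the definition $g_{j+1}=\min\{r:\mb{v}_r\lessdot\mb{v}_{g_j}\}$ yields $g_{j+1}\leq p$. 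Chaining these gives $g_{j+1}\leq p\leq l\leq k<g_{j+1}$, a contradiction. With this claim in hand $|\pi\cap\bd U_k|=1$ for every $k$, so Theorem \ref{thm:extreme_rays} identifies each $\bd U_k$ as an extreme ray of $\mc{C}(\pi)$, giving $\mc{C}\subset\mc{C}(\pi)$.
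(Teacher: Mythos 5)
Your proof is correct and takes essentially the same route as the paper's: showing each extreme ray $\bd U_k$ of $\mc{C}$ is an antichain meeting $\pi$ exactly once (namely at $\mb{v}_{g_j}$ for $g_j\leq k<g_{j+1}$) and then invoking Theorem \ref{thm:extreme_rays}. Your contradiction chain $g_{j+1}\leq p\leq l\leq k<g_{j+1}$ fills in the minimality argument the paper states only tersely (``$U_i$ does not contain a vertex below $\mb{v}_{g_j}$''), while your separate check that no other $\mb{v}_{g_i}$ lies in $\bd U_k$ is superfluous, since $\bd U_k$ is an antichain and $\pi$ is a chain so they can meet at most once.
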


\begin{proof}
That $\pi$ is a maximal chain in $\Pi_P$ is immediate from the way it is constructed as a sequence of vertices one below the next. The construction is well-defined since each vertex in $P$ appears exactly once in the sequence $\mb{v}_k$. That $\pi$ is defined as a reversed subsequence is only so that the elements along it are in increasing rather than decreasing order. Finally, to see that $\mc{C} \subset \mc{C}(\pi)$ simply observe that every extreme ray $\bd U_i$ contains (exactly) one vertex in $\pi$. This is again by construction: $\mb{v}_{g_1}$ is in $\partial U_i$ for $i < g_2$, since $U_i$ does not contain a vertex below $v_{g_1}$. For the same reason we have that $\mb{v}_{g_j} \in \bd U_i$  for $g_j \leq i < g_{j+1}$. The last element $\mb{v}_{g_m}$ is in $\bd U_i$ for $g_m \leq i \leq |P|$. This shows that every antichain $\bd U_i$ is in $\mc{C}(\pi)$, by Theorem \ref{thm:extreme_rays}, and since the maximal sets $\mc{C}(\pi)$ are clearly disjoint for different $\pi$ (up to measure zero boundaries) this shows that $\mc{C} \subset \mc{C}(\pi)$.
\end{proof}

\begin{remark}
There is also a simplicial decomposition of $\mcr{OC}(P)$, and hence of $\R_+^P$, by lower sets. This can be seen via the standard bijection between upper and lower sets, which relates the two through the bijection from upper sets to antichains and then antichains to lower sets. The decomposition works in the same way as the above, with each maximal chain in the poset of lower sets (ordered by inclusion) determining a simplex in $\mcr{OC}(P)$ which is then linearly mapped to a simplex in $\R_+^P$. In fact, the bijection between upper and lower sets also shows that there is a bijection between maximal chains of each, and it follows that a maximal chain of upper sets produces the same simplicial cone (in $\R_+^P$, not in $\mcr{OC}(P)$) as the corresponding maximal chain of lower sets. Since upper sets have a natural interpretation as passage time vectors of antichains we prefer to make the description in terms of upper sets.
\end{remark}

\begin{remark}
The maximal chains in $P$ are also in bijection with the linear extensions of $P$: the set of bijections $\sigma : P \to \{1, \ldots, |P| \}$ such that $\mb{v} \leq \mb{w} \implies \sigma(\mb{v}) \leq \sigma(\mb{w})$. The bijection is defined by letting $U_k = \{ \mb{v} \in P : \sigma(\mb{v}) \leq k \}$, with $U_0 = \emptyset$. As such the linear extensions simply correspond to the ordering of the passage time vector $(G_P(\mb{v}) : \mb{v} \in P)$, with the ordering being well-defined for Lebesgue almost all $\omega$. This implies the connection to the order growth model that we explain below.
\end{remark}

\begin{remark}
The path produced by each maximal chain of $J(P)$ is often referred to as the Sch\"{u}tzenberger or jeu-de-taquin path (see \cite{Fulton:book} for review and \cite{RomSni:jdt} for related results on infinite Young tableaux). While each maximal chain in $J(P)$ determines the maximal path for a simplicial cone of weight vectors, each given path $\pi \in \Pi_P$ is typically produced by many maximal chains of $J(P)$. Intuitively one expects that the probability that a given path is the longest one should be larger for those paths produced by more maximal chains, although this is not entirely precise because it does not take into account the probability of each simplicial cone/maximal chain under a given weight distribution. Enumerating the number of a maximal chains which produce a given path $\pi$ also appears to be difficult, even on posets of the form $[1,m] \times [1,n]$. 
\end{remark}

The proof of Theorem \ref{thm:simplicial_cones} contains the useful fact that each maximal chain in $J(P)$ induces a linear map from $\R^P$ into itself defined by $U_i \mapsto \bd U_i$ for $i=1,\ldots,|P|$. Coordinatewise the mapping works out to be of the form $\eta \mapsto \omega$ where
\[
 \omega(\mb{v}) = \eta(\mb{v}) - \max_{\mb{u} \lessdot \mb{v}} \eta(\mb{u}),
\]
if $\mb{v}$ is not a minimal element of $P$, and $\omega(\mb{v}) = \eta(\mb{v})$ if it is. It is worth recording the following important but well known observation.

\begin{lemma}\label{lem:volume_preserving}
For each maximal chain $U_0 \lessdot U_1 \lessdot \ldots \lessdot U_{|P|}$ of $J(P)$ the linear mapping of $\R^P$ to itself defined by $U_i \mapsto \bd U_i$, $i = 1, \ldots, |P|$, is volume preserving.
\end{lemma}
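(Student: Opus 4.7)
The plan is to choose a convenient ordering of the vertices of $P$ and show that, in those coordinates, the linear map is upper triangular with unit diagonal. Enumerate the vertices by $\mb{v}_k := U_k \setminus U_{k-1}$ for $k = 1, \ldots, |P|$, which is well defined because each cover relation in $J(P)$ adjoins a single element to the upper set, so $U_k = \{\mb{v}_1, \ldots, \mb{v}_k\}$. The key observation is that $\mb{v}_k$ must be a minimal element of $U_k$: otherwise there would be some $\mb{w} \in U_k$ with $\mb{w} < \mb{v}_k$, and then $U_{k-1} = U_k \setminus \{\mb{v}_k\}$ would fail to be closed upward, contradicting that $U_{k-1} \in J(P)$. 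In particular $\mb{v}_k \in \bd U_k$ for every $k$.

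Next I would write the vectors $1_{U_1}, \ldots, 1_{U_{|P|}}$ as the columns of a matrix $A$ in the standard basis $\{\delta_{\mb{v}_j}\}_{j=1}^{|P|}$, and the vectors $1_{\bd U_1}, \ldots, 1_{\bd U_{|P|}}$ as the columns of a matrix $B$. The entry $A_{j,i} = \mathbf{1}\{\mb{v}_j \in U_i\} = \mathbf{1}\{j \leq i\}$, so $A$ is upper triangular with $1$s on the diagonal. The entry $B_{j,i} = \mathbf{1}\{\mb{v}_j \in \bd U_i\}$ vanishes for $j > i$ since $\bd U_i \subset U_i$, and equals $1$ for $j = i$ by the observation above, so $B$ is also upper triangular with unit diagonal.

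The matrix of the linear map from the lemma in the standard basis is then $B A^{-1}$, whose determinant equals $(\det B)/(\det A) = 1$. Hence the map preserves Lebesgue volume.

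I do not foresee any serious obstacle; the only real content is the observation that $\mb{v}_k \in \bd U_k$ for each $k$. As a sanity check, the same conclusion can be reached from the explicit formula $\omega(\mb{v}) = \eta(\mb{v}) - \max_{\mb{u} \lessdot \mb{v}} \eta(\mb{u})$ recorded immediately before the lemma: inside a single simplicial cone the maximum is realised at a fixed cover $\mb{u}^*(\mb{v}) \lessdot \mb{v}$, so the map becomes $\omega(\mb{v}) = \eta(\mb{v}) - \eta(\mb{u}^*(\mb{v}))$, whose matrix $I - N$ with $N_{\mb{v}, \mb{u}} = \mathbf{1}\{\mb{u} = \mb{u}^*(\mb{v})\}$ is unit triangular in any linear extension of $P$ because $\mb{u}^*(\mb{v}) < \mb{v}$.
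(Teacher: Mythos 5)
Your proof is correct and takes essentially the same approach as the paper's: order the vertices as $\mb{v}_k = U_k \setminus U_{k-1}$, observe $\mb{v}_k \in \bd U_k$, and exhibit upper triangular structure with unit diagonal. Your factorization of the map as $B A^{-1}$ with both factors unit upper triangular is a slightly tidier bookkeeping of what the paper computes directly, but the content is identical.
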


\begin{proof}
This follows because the matrix representing the mapping can be put into an upper triangular form with all ones on the diagonal, in the following way. Define for $k=1,\ldots,|P|$ $v_k=U_k\setminus U_{k-1}$. We take $v_1, \ldots, v_{|P|}$ as the standard basis vectors of $\R^{|P|}$. Let $A$ be the matrix defining our map in terms of our basis. Furthermore, consider the sets $U_i$ and $\partial U_i$ as vectors in $\R^{|P}$. Clearly, $Av_1=v_1$ and for $k>1$,
\begin{align*} 
Av_k & = AU_k - AU_{k-1} = v_k + \sum_{i<k} 1_{\{v_i\in \partial U_k\}} - 1_{\{v_i\in\partial U_{k-1}\}}.
\end{align*}
This shows that $A$ has 1's on the diagonal and 0's below the diagonal, proving that it is volume preserving.
\end{proof}

\subsection*{Converting the Passage Time into a Sum}

The simplicial decomposition of $\R_+^P$ by maximal chains in $J(P)$ provides a useful way of converting the passage time into a sum, in the following way. For $\omega \in \R_+^P$ let $\pi^*(\omega)$ be the (Lebesgue almost surely unique) longest path in $\Pi_P$ corresponding to $\omega$, so that
\[
 G_P = \langle \pi^*(\omega), \omega \rangle.
\]
Now for a given maximal chain $U_0 \lessdot U_1 \lessdot \ldots \lessdot U_{|P|}$ in $J(P)$ let $E = E_U$ be the $|P| \times |P|$ matrix with columns $\bd U_1, \bd U_2, \ldots, \bd U_{|P|}$, as vectors in $\R_+^P$. Then $E$ is an invertible linear map of $\R^P$ into itself (by Lemma \ref{lem:volume_preserving}), hence there exists $\Lambda_U = \Lambda \in \R^P$ such that
\begin{align}\label{eqn:lambda_to_omega}
 \omega = E_U \Lambda_U.
\end{align}
This leads to the expression
\[
 G_P = \langle \pi^*(\omega), E \Lambda \rangle = \langle E' \pi^*(\omega), \Lambda \rangle,
\]
where $E'$ denotes the transpose of $E$, so that the rows of $E'$ are antichains of $P$. Since each antichain intersects any given path at most once, this implies that $E' \pi^*(\omega)$ is a vector whose entries are either zero or one:
\[
 (E' \pi^*(\omega))_i = \1{\supp(\pi^*(\omega)) \cap \supp(\bd U_i) \neq \emptyset}.
\]
Combining these together leads to the formula
\begin{align}\label{eqn:general_sum_representation}
G_P = \sum_{i=1}^{|P|} \Lambda_i \1{\supp(\pi^*(\omega)) \cap \supp(\bd U_i) \neq \emptyset}. 
\end{align}
Note that this formula holds for \textit{any} choice of maximal chain $U$ of $J(P)$, although the value of the $\Lambda$ changes with different choices of $U$. If the $\omega$ are random but have density $f(\mb{x})$ with respect to Lebesgue measure on $\R_+^P$ then the $\Lambda$ have law $f(E \mb{x})$.

The choice of $U$ can be made depending on $\omega$. For each $\omega \in \R_+^P$ let $U(\omega)$ be the maximal chain corresponding to the passage time vector $(G_P(\mb{v}; \omega) : \mb{v} \in P)$. Then the longest path intersects each antichain $\bd U_i(\omega)$ exactly once, leading to the identity
\begin{align}\label{eqn:dynamic_lambda}
 G_P = \sum_{i=1}^{|P|} \Lambda_i.
\end{align}
In this case all $\Lambda_i$ are positive but the density is more complicated. Now it becomes the mixture
\[
 \P((\Lambda_1, \ldots, \Lambda_{|P|}) \in d \mb{x}) = \sum_{\substack{U \textrm{ maximal} \\ \textrm{chains of } J(P)}} f(E_U \mb{x}) \P(\omega \in \Span_+ \{ \bd U_1, \ldots, \bd U_{|P|} \} ).
\]

\subsection*{Corner Growth Model}

The formulas above express the last passage percolation problem in terms of the corner growth model, which is a well known equivalent description (see \cite{timo:CGM_notes, Romik:book} for reviews). In the continuous time version of corner growth the elements of $P$ are ''filled in'' at random times, subject to the constraint that an element cannot be filled in until all elements in its lower set have also been filled in. At any given time the ``corners'' are the elements of $P$ which are admissible to be filled in; this nomenclature is motivated by the process on $\Z^2$. The process starts at time zero and $\omega(\mb{v})$ is the \textrm{additional} time it takes for $\mb{v} \in P$ to be filled in \textit{after} all elements of $L(\mb{v}) \backslash \{ \mb{v} \}$ have been filled in. In the case that $L(\mb{v}) = \{ \mb{v} \}$, meaning that $\mb{v}$ is a minimal element of $P$, then there is no waiting rule and $\omega(\mb{v})$ is the time at which $\mb{v}$ is filled in. Now $G_P(\mb{v})$ is exactly the time at which element $\mb{v}$ is filled in, and if we take this as a definition of $G_P$ then it implies the recursion \eqref{eqn:recursion}. In fact this recursion shows that the corner growth description is equivalent to the last passage one. The longest path is the maximal chain $\pi$ of $P$ that takes the longest amount of time to be filled in, together with the additional requirement that for every $\mb{v} \in \pi$ this same condition holds on $L(\mb{v})$.    
 
The corner growth representation also makes clear the basic idea behind \eqref{eqn:dynamic_lambda}. The vector $G_P(\mb{v}; \omega)_{v \in P}$ is clearly in the order cone $\mcr{OC}(P)$ of $P$, due to the positivity of $\omega \in \R_+^P$. Then $G_P(\mb{v}; \omega)$ belongs to a unique (Lebesgue almost surely) simplex in $\mcr{OC}(P)$ that corresponds to a maximal chain in $J(P)$. The simplex describes the ordering according to which the elements of $P$ are filled in for this particular $\omega$, and then each $\Lambda_i$ is the time between the filling in of the $i$th and $(i-1)$st elements of $P$. More precisely, letting $U_0 \lessdot U_1 \lessdot \ldots \lessdot U_{|P|}$ be the maximal chain of $J(P)$ determined uniquely by $\omega$ (Lebesgue almost surely), we let $\mb{v}_{k} = U_k \backslash U_{k-1} \in P$ for $k = 1, 2, \ldots, |P|$ (the vertices ordered according to the time at which they appear) and then it follows that
\[
\Lambda_i = G_P(\mb{v}_i) - G_P(\mb{v}_{i-1})
\]
with $G_P(\mb{v}_0) = 0$. This clearly implies \eqref{eqn:dynamic_lambda}.

\begin{remark}
When $P$ is a Young diagram (including the poset $[1,m] \times [1,n]$) the maximal chains of $J(P)$ are in bijection with the Young tableaux for the particular diagram. The Young tableaux describes a linear map from a simplex in $\mcr{OC}(P)$ to a simplex in $\R_+^P$, with the outputted simplex being precisely the set of weights that produce that particular ordering for the passage times.
\end{remark}

\section{Independent Exponential Weights \label{sec:exp_weights}}

The last passage model with iid exponential weights is solvable, meaning that exact formulas can be derived for various statistics such as the passage time, at least on the poset $[1,m] \times [1,n]$. One basic reason for this is that the memoryless property of the exponential distribution makes the corner growth process a continuous time Markov chain. It is a straightforward calculation to show that, when the $\omega$ are iid exponential$(1)$ random variables, at any fixed time the random amount of time until the next corner is filled has an exponential distribution with parameter equal to the number of available corners. This is made clear by the following simple fact about the exponential distribution in several variables.

\begin{lemma}\label{lem:exp_in_cone}
Let $X = (X_1, \ldots, X_n)$ with $X_i \sim \operatorname{exponential}(\lambda_i)$ independent, and let $\mb{v}_1, \ldots, \mb{v}_n \in \R_+^n$ be linearly independent. Let $\mc{C} = \Span_+ \{\mb{v}_1, \ldots, \mb{v}_n \}$. Then
\begin{align}\label{eqn:exp_in_cone_prob}
\mP(X \in \mc{C}) = |\det V| \prod_{i=1}^n \frac{\lambda_i}{\langle \mb{v}_i, \mb{\lambda} \rangle}, 
\end{align}
where $\mb{\lambda} = (\lambda_1, \ldots, \lambda_n)$ and $V$ is the $n \times n$ matrix with columns $\mb{v}_1, \ldots, \mb{v}_n$. Moreover
\[
\mc{L} \left(X \left | X \in \mc{C} \right. \right) = \mc{L} \left( \sum_{i=1}^n \Lambda_i \mb{v}_i \right) = \mc{L}(V \mb{\Lambda}),
\]   
where $\mc{L}(X | X \in \mc{C})$ denotes the conditional law of the random variable and $\Lambda_i \sim \operatorname{exponential}(\lambda_i ||\mb{v}_i||_1)$ are independent and $\mb{\Lambda} = (\Lambda_1, \ldots, \Lambda_{|P|})$.
\end{lemma}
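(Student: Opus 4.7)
The plan is to prove both assertions by a single linear change of variables. The starting point is that the cone $\mc{C} = \Span_+ \{ \mb{v}_1, \ldots, \mb{v}_n \}$ is, because $V$ is invertible and the $\mb{v}_i$ are the columns of $V$, exactly the image of the positive orthant $\R_+^n$ under the map $\Lambda \mapsto V \Lambda$. Consequently the event $\{X \in \mc{C}\}$ coincides with $\{V^{-1} X \in \R_+^n\}$, which is what makes the change of variables $X = V \Lambda$ the natural one.

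First I would push the joint density of $X$ through this transformation. On $\R_+^n$ the density of $X$ is $\left( \prod_i \lambda_i \right) \exp(-\ip{\mb{\lambda}}{X})$, so after the substitution $X = V \Lambda$, together with the Jacobian factor $|\det V|$, the joint density of $\Lambda$ on $\R_+^n$ becomes
\[
|\det V| \left( \prod_i \lambda_i \right) \exp\!\bigl(-\ip{\mb{\lambda}}{V \Lambda}\bigr) \, \mathbf{1}_{\R_+^n}(\Lambda).
\]
The key algebraic identity is $\ip{\mb{\lambda}}{V \Lambda} = \ip{V^T \mb{\lambda}}{\Lambda} = \sum_i \ip{\mb{v}_i}{\mb{\lambda}} \, \Lambda_i$, since the $i$th column of $V$ is $\mb{v}_i$. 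Hence the density factors as a product of one-dimensional exponential kernels with rates $\ip{\mb{v}_i}{\mb{\lambda}}$.

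Second, integrating the factored density over $\R_+^n$ gives the probability formula: each coordinate contributes $\int_0^\infty e^{-\ip{\mb{v}_i}{\mb{\lambda}} t} \, dt = 1/\ip{\mb{v}_i}{\mb{\lambda}}$, so
\[
\mP(X \in \mc{C}) = |\det V| \prod_{i=1}^n \frac{\lambda_i}{\ip{\mb{v}_i}{\mb{\lambda}}}.
\]
The positivity of $\ip{\mb{v}_i}{\mb{\lambda}}$ needed for convergence follows from $\lambda_i > 0$ together with $\mb{v}_i \in \R_+^n \setminus \{0\}$ (linear independence rules out the zero vector). Third, dividing the factored joint density by $\mP(X \in \mc{C})$ gives the conditional density of $\Lambda$ given $X \in \mc{C}$ as $\prod_i \ip{\mb{v}_i}{\mb{\lambda}} \, e^{-\ip{\mb{v}_i}{\mb{\lambda}} \Lambda_i}$, which is the joint law of independent exponentials with rates $\ip{\mb{v}_i}{\mb{\lambda}}$; pushing this back through $X = V \Lambda$ yields the claimed conditional distribution.

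There is no real obstacle here beyond keeping the bookkeeping honest: the change of variables is linear, the Jacobian is $|\det V|$, and the transpose identity $\ip{\mb{\lambda}}{V \Lambda} = \ip{V^T \mb{\lambda}}{\Lambda}$ is what forces the density to factor into an $n$-fold product. The only point worth flagging is that the rate in the $i$th coordinate of the conditional law is $\ip{\mb{v}_i}{\mb{\lambda}}$; in the iid case $\lambda_i \equiv \lambda$ that is used in the paper this reduces to $\lambda \, \|\mb{v}_i\|_1$, which matches the expression stated in the lemma.
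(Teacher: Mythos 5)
Your proof is correct and follows exactly the approach the paper sketches (it only gives a one-line hint: ``the proof of this lemma comes by mapping $\R_+^n$ into $\mc{C}$ using the matrix $V$''), namely the linear change of variables $X = V\Lambda$ together with the factorization $\ip{\mb{\lambda}}{V\Lambda} = \sum_i \ip{\mb{v}_i}{\mb{\lambda}}\Lambda_i$. Your closing flag is also a genuine catch: the conditional rates are $\ip{\mb{v}_i}{\mb{\lambda}}$ in general, and the lemma's stated rate $\lambda_i\|\mb{v}_i\|_1$ agrees with this only when the $\lambda_j$ are all equal (as in the iid exponential case the paper actually uses).
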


The proof of this lemma comes by mapping $\R_+^n$ into $\mc{C}$ using the matrix $V$, but this type of mapping is one-to-one iff $\mc{C}$ is a simplex. Note that in this representation the chosen length of the extreme rays is irrelevant since it always cancels out, as in \eqref{eqn:exp_in_cone_prob}. Since we canonically take the extreme rays of the cones in Section \ref{sec:simplicial_decomp} to have entries either zero or one, it follows that the $\ell^1$ norm of any extreme ray of a maximal cone $\mc{C}(\pi)$ is equal to the number of corners available in the corner growth process. 

As in the last section the weight vector $\omega$ can always be uniquely rewritten as a linear combination of the extreme rays of the simplex that it belongs to (Lebesgue almost surely), and the simplices are in bijection with the maximal chains of $J(P)$. Lemma \ref{lem:exp_in_cone} implies that in the exponential last passage model, conditionally on the choice of cone, the coefficients in the linear combination are again independent exponentials. Alternatively, by forgetting about the conditioning one can think of the $\Lambda_i$ in \eqref{eqn:dynamic_lambda} as exponential random variables with random parameters. The parameters are independent of the exponentials and their joint law is determined by the probabilities \eqref{eqn:exp_in_cone_prob} and the $\ell_1$ lengths of the extreme rays, which as mentioned are simply counting the number of corners available at a given time. Equivalently, the joint law on parameters is determined by the directed random walk on $J(P)$ started from the minimal element $\emptyset$ and with transitions proportional the to the parameters $\lambda_i$ of the vertices available at each time. This random walk produces a random maximal chain of $J(P)$ and the vector of parameters is simply the number of corners available at each time of the walk. The difficulty in using this approach is that the sheer number of maximal chains of $J(P)$ makes it difficult to average out over the random parameters. Even on the poset $[1,m] \times [1,n]$ the distribution of the random parameters appears to be complicated.

The geometric point-of-view sheds additional some light on the unique properties of the exponential distribution. Equation \eqref{eqn:exp_in_cone_prob} can also be equated to the (suitably normalized) volume of the cone $\mc{C}$ intersected with the hyperplane $\{ \mb{x} : \langle \lambda, \mb{x} \rangle = 1 \}$, since on that part of the hyperplane the exponential density is constant. The volume description of the probability does not require that $\mc{C}$ be a simplex, and in particular implies that the probability of any given path $\pi \in \Pi_P$ being the maximal path is the (normalized) volume of the maximal cone $\mc{C}(\pi)$ intersected with the same hyperplane. This intersection is a polyhedron with codimension $1$, and while algorithms for computing its volume exist they are in general \#P hard \cite{brightwell_winkler:counting_linear_extensions}. On certain posets it may be that the structure of $\mc{C}(\pi)$ allows for more efficient computation, but in full generality it appears to be intractable. 

Since Lemma \ref{lem:exp_in_cone} only works for simplices, to compute path probabilities via this formula would require using a simplicial decomposition of $\mc{C}(\pi)$ and summing over all of the simplices. The simplicial decomposition of Section \ref{sec:simplicial_decomp} is an obvious choice but the sheer number of simplices involved makes it impractical. On the poset $[1,m] \times [1,n]$ the decomposition involves finding all Young tableaux which produce the $\pi$ as its Sch\"{u}tzenberger path. This appears to be difficult, but we expect that among all paths the extreme corner paths have the largest number of associated Young tableaux. 
\section{IID Uniform Weights \label{sec:uniform_weights}}

Independent and identically distributed uniform weights correspond to Lebesgue measure on $[0,1]^P$, and in this case the \textit{chain polytope} of Stanley \cite{stanley:two_poset_polytopes} is a useful tool in the analysis. 

\begin{definition}
The \textit{chain polytope} $\mcr{C}(P)$ of a poset $P$ is the subset of $[0,1]^P$ defined by 
\[
 \mcr{C}(P) = \{ \omega \in [0,1]^P : \mb{v}_1 < \mb{v}_2 < \ldots < \mb{v}_k \implies \omega(\mb{v}_1) + \omega(\mb{v}_2) + \ldots + \omega(\mb{v}_k) \leq 1  \}.
\]
\end{definition}

Clearly $\mcr{C}(P)$ is a bounded polytope, and by the positivity assumption on the $\omega$ it is enough to restrict the chains in the definition to just the maximal chains. Therefore $\mcr{C}(P)$ is the same as the event that $\{ G_P \leq 1 \}$ for iid uniform weights. This left-tail probability of the passage time distribution is shown in \cite[Corollary 4.2]{stanley:two_poset_polytopes} to be equal to
\begin{align}\label{eqn:vol_formula}
\P(G_P \leq 1) = \operatorname{Vol}(\mcr{C}(P)) = \frac{e(P)}{|P|!},
\end{align}
where $e(P)$ is the number of linear extensions of $P$ (equivalently the number of maximal chains in $J(P)$). For example, on the subposet $[1,m] \times [1,n]$ of $\Z^2$ this probability is
\[
 \P(G(m,n) \leq 1) = \prod_{i=1}^m \prod_{j=1}^n \frac{1}{m-i + n-j +1},
\]
thanks to the hook length formula. This particular probability is of limited use since the event $\{G_P \leq 1 \}$ is so far from the typical behavior $G(m,n) \sim c(m+n)$ as $m,n \to \infty$, but we can still use the chain polytope to give a characterization of $G_P$. In particular, we can represent the passage time $G_P$ in terms of the $\ell^1$ norm of a uniformly chosen point from a random chain polytope $\mcr{C}(P^*)$. Here $P^*$ is a random poset whose distribution is determined by $P$. It is constructed in the following way: begin with \eqref{eqn:dynamic_lambda} and let $U = U(\omega)$ be the (Lebesgue almost surely unique) maximal chain corresponding to $\omega$. Then $\omega \in \Span_+ \{ \bd U_1(\omega), \ldots, \bd U_{|P|}(\omega) \}$ implies that there are Lebesgue almost surely positive $\lambda_i(\omega)$ such that $\omega = E_U \lambda_U$, or equivalently
\[
 \omega = \sum_{i=1}^{|P|} \lambda_i(\omega) \bd U_i(\omega).
\]
But also, since $||\omega||_{\infty} \leq 1$ almost surely, this means that the $\lambda_i$ must satisfy
\[
 \left| \left| \sum_{i=1}^{|P|} \lambda_i \bd U_i \right| \right|_{\infty} \leq 1.
\]
Since the entries of each antichain $\bd U_i$ are either one or zero, this means that
\begin{align}\label{eqn:Cpstar_ineq}
 \sum_{i=1}^{|P|} \lambda_i \1{\mb{v} \in \supp(\bd U_i) } \leq 1, \quad \textrm{ for all } \mb{v} \in P.
\end{align}
Combined with the positivity condition, the set of $\lambda_i$ satisfying these inequalities is clearly the chain polytope of some poset $P^* = P^*(U)$ that is determined by the particular ordering $U$. In fact the choice of $P^*$ is not unique, but this will not concern us since we will only be concerned with the number of linear extensions of $P^*$ which turns out to be an invariant. For now let $P^*$ be any poset such that
\[
 \mcr{C}(P^*) = \left \{ \lambda_i \geq 0 : \sum_{i=1}^{|P|} \lambda_i \1{\mb{v} \in \supp(\bd U_i)} \leq 1 \textrm{ for all } \mb{v} \in P \right \}.
\]
Now $\omega$ being uniformly distributed on $[0,1]^P$ means also that its density on $\mc{C}_U \cap [0,1]^P := \Span_+ \{ \bd U_1, \ldots, \bd U_{|P|} \} \cap [0,1]^P$ is uniform. On the latter set there is a bijection between $\omega \in \mc{C}_U \cap [0,1]^P$ and $\lambda \in \mcr{C}(P^*)$ given by $\omega = E_U \lambda_U$, and since $E_U$ is volume preserving (by Lemma \ref{lem:volume_preserving}) it follows that $\lambda$ is uniformly distributed on $\mcr{C}(P^*)$, and that
\[
\Vol(\mc{C}_U \cap [0,1]^P) = \frac{e(P^*)}{|P^*|!} = \frac{e(P^*)}{|P|!}.
\]
Now via the formula
\[
 G_P = \sum_{i=1}^{|P|} \lambda_i
\]
for the passage time, this leads to a method for sampling $G_P$ when $\omega$ is iid Uniform$(0,1)$:
\begin{itemize}
 \item sample a maximal chain $U_0 \lessdot U_1 \lessdot \ldots \lessdot U_{|P|}$ according to the probabilities $e(P^*(U))/|P|!$,
 \item sample $\lambda$ as a uniform point in the associated chain polytope $\mcr{C}(P^*(U))$,
 \item return the $\ell^1$ norm $\sum_{i=1}^{|P|} \lambda_i$ as the passage time.
\end{itemize}
This characterizes $G_P$ as the $\ell^1$ norm of a point chosen uniformly from a random chain polytope, although the distribution of the polytope seems to be complicated. Evidently the final answer does not depend on the choice of $P^*$ but for the sake of concreteness we give one possible way of constructing it. By \eqref{eqn:Cpstar_ineq} the inequalities defining $\mcr{C}(P^*)$ can be written as
\[
 \sum_{i=\eta_{\mb{v}}}^{\tau_{\mb{v}}} \lambda_i \leq 1,
\]
where $\eta_{\mb{v}} = \min \{ i : \mb{v} \in \supp(\bd U_i) \}$ and $\tau_{\mb{v}} = \max \{ i : \mb{v} \in \supp(\bd U_i) \}$ are the first and last times that $\mb{v}$ is in $\supp (\bd U_i)$, respectively. Now define a graph with the times $\{1,2,\ldots, |P|\}$ as its vertices and an edge connecting $i,j$ iff there exists a $\mb{v} \in P$ such that $i,j \in [\eta_{\mb{v}}, \tau_{\mb{v}}]$. Thus an edge between two times $i$ and $j$ means that there is an element of $P$ which is in both $\bd U_i$ and $\bd U_j$, or in other words $\bd U_i \cap \bd U_j \neq \emptyset$. This graph may have multiple connected components, and each component corresponds to a component of $P^*(U)$. It can quickly be seen that a new component is born every time that there is a $\bd U_i$ with exactly one non-zero entry, so that
\[
 \textrm{number of components of } P^*(U) = 1 + | \{ i : \supp(\bd U_i) \textrm{ is a singleton} \} |.
\]
The constructed graph is the \textit{comparability graph} of the poset $P^*$. The comparability graph of any poset is defined with the elements of $P$ as its vertices and an edge connecting two vertices iff the corresponding poset elements are comparable to each other (i.e. one is in order with the other). See \cite[Chapter 5]{golumbic:book} for more. We have already encountered this object implicitly: the order graph of Definition \ref{defn:order_graph} is the comparability graph of the poset restricted to $\supp(a_1) \Delta \supp(a_2)$. Similarly, the disorder graph of Definition \ref{defn:disorder_graph} is the incomparability graph of the poset restricted to $\pi \Delta \pi'$, where the incomparability graph is defined similarly but with edges between elements that are incomparable to each other. The number of linear extensions of the poset is determined by its comparability graph, showing that the choice of $P^*$ above is irrelevant. Constructing a candidate $P^*$ is the problem of choosing a transitive orientation for the graph: an assignment of directions to the edges such that the adjacency relation of the resulting directed graph is transitive. Algorithms for finding transitive orientations are found in \cite[Chapter 5]{golumbic:book}, as are formulas for the number of transitive orientations of the graph.

\section{Open Questions \label{sec:open}}

\textbf{Most Likely Paths.} Our interest in this problem was primarily motivated by the following question: on the subposet $P = [1,n]^2$ of $\Z^2$, which up-right path has the highest probability of being the largest? Or in other words, which path $\pi \in \Pi_P$ maximizes $\P(\mc{C}(\pi))$ for a fixed probability measure $\P$ on $\R_+^P$? Even in the exactly solvable case of iid exponential weights this question appears to be difficult, for the reasons described in Section \ref{sec:exp_weights}. We are confident that the answer is the extreme corner path that goes straight up to $(1,n)$ and then straight over to $(n,n)$ (or its obvious symmetric copy across the main diagonal) but we have been unable to prove this. Certainly since the fluctuations of the path away from the main diagonal are known to be of order $n^{2/3}$ in the exactly solvable cases, and thought to be of the same order in most iid cases, the paths that are outside of the window of size $n^{1/2}$ should have more weight than the paths inside this window, which of course supports the bulk of the probability under the uniform measure on paths. 
\newline

\noindent \textbf{Negative Correlations Between Transversal Fluctuations of the Maximizer and its Length.} Again we consider the poset $[1,n]^2$ in $\Z^2$. Our belief that the outside paths have the largest individual probability of being longest is motivated by the idea that their maximal cones $\mc{C}(\pi)$ take up more of the environment space than the other maximal cones. Since the passage time $G_P$ is the inner product between the weight and the corresponding longest path, if a maximal cone $\mc{C}(\pi)$ is relatively large, then conditionally on $\omega$ being in that cone there is more room for it to point away from the path vector $\pi$. Conversely, if $\mc{C}(\pi)$ is relatively small then there is little room for an $\omega$ inside of it to point away from $\pi$. Thus we expect that there should be a \textit{negative} correlation between the length of a path and its location in the box. It would be interesting to see if this relationship effectively cancels in terms of expected values, so that each path contributes close to the same amount to the overall expected value of the passage time.
\newline

\noindent \textbf{Structure of the Path Measure.} For Lebesgue almost all $\omega \in \R_+^P$ the path lengths are all distinct, this leads to a total of $|\Pi_P|$ possible path lengths. Yet they are created from only $|P|$ random variables, leading to a strong linear dependency between the different path lengths. For example, in the case $P = [1,m] \times [1,n]$ there are a total of
\[
\dbinom{m+n}{n}
\]
different path lengths, yet it can be shown that the vector of these path lengths lives in a subspace of dimension $(m-1)(n-1) + 1$ (it is actually slightly smaller than $mn$). Consequently, every path length can be expressed as a linear combination of $(m-1)(n-1) + 1$ well chosen path lengths. We are curious to know if a similar relationship holds for the path probabilities $\P(\mc{C}(\pi))$. Does knowing $\P(\mc{C}(\pi))$ for a relatively small subset of $\pi \in \Pi_P$ determine $\P(\mc{C}(\pi))$ for all $\pi \in \Pi_P$? A result of this type would be useful as it is generally difficult to describe the measure $\mathbf{P}(\pi) := \P(\mc{C}(\pi))$ on $\Pi_P$. If one thinks of LPP as a random walk in a random environment (although really it is just a walk in a random environment), then $\mathbf{P}$ is the \textit{averaged} path measure. This $\mathbf{P}$ is not Markov, which immediately takes away one of the nicest descriptions for path measures, and is generally difficult to compute explicitly. On the other hand there should be more structure than it simply being a point in the probability simplex of dimension $|\Pi_P|$. It does not seem unreasonable to expect $\mathbf{P}$ to have some ``low-dimensional'' structure, although we are uncertain of what precisely it would be. 
\newline 

\noindent \textbf{Monotonicity of the Path Measure.} If, for square boxes $[1,n]^2$ in $\Z^2$, the outside paths have the highest probability of being the longest and the middle paths have the lowest probability, then we expect that there should some sort of monotonicity in the path probabilities as the paths move from the middle to the outside. It is not entirely clear to us what the proper ordering on paths should be, or even if a total ordering exists. We expect that there should be a natural partial ordering on the paths, based solely on their relative locations in space, such that their respective probabilities obey the partial ordering. It would be interesting to derive a rate of growth of these path probabilities along a chain in this partial ordering.
\newline

\noindent \textbf{Path Measure Proportional to the Number of Extreme Rays.} On $[1,m] \times [1,n]$ in $\Z^2$ it would be interesting to study the path measure that is proportional to the number of extreme rays per path, i.e. the measure on $\Pi_{(m,n)}$ with probabilities
\[
\mb{P}(\pi) = \frac{|\mcr{ER}(\pi)|}{\sum_{\pi} |\mcr{ER}(\pi)|}.
\]
This measure is purely combinatorial but may still exhibit many of the features of the annealed last passage measure. Given that the extreme rays of $\mcr{ER}(\pi)$ have relatively large angles between them, one might expect that the number of extreme rays of $\mcr{C}(\pi)$ is a reasonable proxy its volume, under certain measures on $\R_+^P$. If so it would give some understanding of the expected universality behavior of the paths. In particular it would be interesting to know if the transversal fluctuations of the path are superdiffusive under the annealed measure. One might also hope that the asymptotics of probabilities of certain special paths, such as the extremal ones, could be computed under this measure.
\newline

\noindent \textbf{Face Lattice of $\mc{C}(\pi)$.} The faces of a polyhedral cone are any of its intersections with half-spaces with the property that no interior point of the cone lies on the boundary of the half-space. The set of faces can be made into a lattice (in fact an Eulerian lattice), where the partial ordering is determined by set containment of faces. We have not been able to fully determine the full structure of these inclusions for $\mc{C}(\pi)$, beyond Theorem \ref{thm:edges} which explains the inclusion of the one dimensional faces (the extreme rays) in the two-dimensional faces. Being simplicial, each of the simplices $\bd U_1, \ldots, \bd U_{|P|}$ described in Section \ref{sec:simplicial_decomp} has the well known binomial poset of the appropriate size as its face lattice, but many of the faces will be interior to $\mc{C}(\pi)$. A more useful description of the face lattice would be in terms of the extreme rays of $\mc{C}(\pi)$ or the normal vectors $\pi - \pi'$ that define its facets, with the inclusions being expressed in terms of relations between these vectors. Part of our decision to explain the last passage model on general posets is motivated by a desire to explain the face lattice. Our hope is that the description of the facets can be iterated in some way to provide a description of the face lattice. Since the lower dimensional faces can be seen as ``facets of facets'', if the facets can be described as last passage percolation problems on a smaller poset, then Theorem \ref{thm:faces} can be applied again without modification. This indeed works for the facets of the form $\omega(\mb{v}) = 0$, but is more complicated for those of the form $\langle \pi - \pi', \omega \rangle = 0$. We do not know of a description of these facets as a last passage model on a smaller poset, but perhaps there is such a description in terms of a matroid or a related object.
\newline

\noindent \textbf{Number of Young Tableaux that Produce a Given Longest Path.} The simplicial decomposition of Section \ref{sec:simplicial_decomp} suffers from the curse of dimensionality, in that the number of simplices used to partition each cone $\mc{C}(\pi)$ is exponentially larger than the ambient space. The number of such simplices could perhaps be used as a rough proxy for the probability of each $\mc{C}(\pi)$. On $[1,m] \times [1,n]$ we expect that among all paths $\pi \in \Pi_P$ the extreme corner paths have the largest number of simplices in their decomposition, which would lead credence to our belief that the corner paths are the modes of the distribution. This assertion is equivalent to stating that the corner path is the Sch\"{u}tzenberger path for the largest number of Young tableaux. We would be interested in asymptotics of the number of Young tableaux that produce a given longest path. 
\newline

\noindent \textbf{Polymerization.} Closely related to the last passage percolation problem is the notion \textit{directed polymers}. Here the environment variables do not on their own determine a path, instead there is some extra randomness involved. For each inverse temperature $\beta \geq 0$ and $\omega \in \R^P$ the directed polymer measure on $\Pi_P$ is defined by the Gibbs measure
\[
\mb{P}_{\beta}^{\omega}(\pi) = e^{\beta \langle \pi, \omega \rangle}/Z_{\beta}^{\omega}
\]
where $Z_{\beta}^{\omega} = \sum_{\pi \in \Pi_P} e^{\beta \langle \pi, \omega \rangle}$ is the partition function. Note that in this case we remove the restriction that the $\omega$ have positive coordinates. With $\omega$ fixed this is often referred to as the quenched measure. As $\beta \to \infty$ it is clear that the quenched measure concentrates on the longest path (or splits uniformly amongst all paths that achieve the longest length, if there are several). In the finite temperature setting of directed polymers the analogue of the maximal cones $\mc{C}(\Pi)$ are the sets of $\omega$ which produce a given value of $\mb{P}_{\beta}^{\omega}(\pi)$, i.e. for each $\mb{Q}$ a probability measure on $\Pi_P$ one considers the set
\[
\mc{V}_{\beta}(\mb{Q}) = \left \{ \omega \in \R^P : \mb{P}_{\beta}^{\omega} = \mb{Q} \right \}.
\]
Note that for many $\mb{Q}$ this measure is likely empty, and it is an interesting problem to determine useful conditions on $\mb{Q}$ for which this is not the case. If it is not then in the variables $e^{\beta \omega(\mb{v})}$ the set is an algebraic variety, and it would be interesting if any meaningful description of it can be made. In particular, one might hope that properties of the maximal cones $\mc{C}(\pi)$ can be transferred to properties of these varieties.

\bibliographystyle{amsalpha}
\bibliography{papers}

\providecommand{\bysame}{\leavevmode\hbox to3em{\hrulefill}\thinspace}
\providecommand{\MR}{\relax\ifhmode\unskip\space\fi MR }
\providecommand{\MRhref}[2]{%
  \href{http://www.ams.org/mathscinet-getitem?mr=#1}{#2}
}
\providecommand{\href}[2]{#2}
\begin{thebibliography}{GRAS17}

\bibitem[AD95]{aldous_diaconis:particle_process}
D.~Aldous and P.~Diaconis, \emph{Hammersley's interacting particle process and
  longest increasing subsequences}, Probab. Theory Related Fields \textbf{103}
  (1995), no.~2, 199--213. \MR{1355056 (96k:60017)}

\bibitem[BCS06]{BCS:cube_root}
M.~Bal\'{a}zs, E.~Cator, and T.~Sepp\"{a}l\"{a}inen, \emph{Cube root
  fluctuations for the corner growth model associated to the exclusion
  process}, Electron. J. Probab. \textbf{11} (2006), no. 42, 1094--1132.
  \MR{2268539}

\bibitem[BW91]{brightwell_winkler:counting_linear_extensions}
Graham Brightwell and Peter Winkler, \emph{Counting linear extensions}, Order
  \textbf{8} (1991), no.~3, 225--242. \MR{1154926}

\bibitem[CG05]{CG:Hammersley}
Eric Cator and Piet Groeneboom, \emph{Hammersley's process with sources and
  sinks}, Ann. Probab. \textbf{33} (2005), no.~3, 879--903. \MR{2135307}

\bibitem[Cor14]{Corwin:macdonald_review}
Ivan Corwin, \emph{Macdonald processes, quantum integrable systems and the
  {K}ardar-{P}arisi-{Z}hang universality class}, Proceedings of the
  {I}nternational {C}ongress of {M}athematicians---{S}eoul 2014. {V}ol. {III},
  Kyung Moon Sa, Seoul, 2014, pp.~1007--1034. \MR{3729062}

\bibitem[Cor18]{Corwin:exactly_solving}
\bysame, \emph{Exactly solving the {KPZ} equation}, Random growth models, Proc.
  Sympos. Appl. Math., vol.~75, Amer. Math. Soc., Providence, RI, 2018,
  pp.~203--254. \MR{3838899}

\bibitem[CQR15]{CQR:KPZ_fixed_point}
Ivan Corwin, Jeremy Quastel, and Daniel Remenik, \emph{Renormalization fixed
  point of the {KPZ} universality class}, J. Stat. Phys. \textbf{160} (2015),
  no.~4, 815--834. \MR{3373642}

\bibitem[DOV18]{DOV:directed_landscape}
Duncan Dauvergne, Janosch Ortmann, and B{\'a}lint Vir{\'a}g, \emph{The directed
  landscape}, arXiv preprint arXiv:1812.00309 (2018).

\bibitem[Ful97]{Fulton:book}
William Fulton, \emph{Young tableaux}, London Mathematical Society Student
  Texts, vol.~35, Cambridge University Press, Cambridge, 1997, With
  applications to representation theory and geometry. \MR{1464693}

\bibitem[GO18]{GeoOrt:bernoulli}
Nicos Georgiou and Janosch Ortmann, \emph{Optimality regions and fluctuations
  for bernoulli last passage models}, Mathematical Physics, Analysis and
  Geometry \textbf{21} (2018), no.~3, 22.

\bibitem[Gol04]{golumbic:book}
Martin~Charles Golumbic, \emph{Algorithmic graph theory and perfect graphs},
  second ed., Annals of Discrete Mathematics, vol.~57, Elsevier Science B.V.,
  Amsterdam, 2004, With a foreword by Claude Berge. \MR{2063679}

\bibitem[GRAS16]{GRS:cocycles_LPP}
Nicos Georgiou, Firas Rassoul-Agha, and Timo Sepp\"{a}l\"{a}inen,
  \emph{Variational formulas and cocycle solutions for directed polymer and
  percolation models}, Comm. Math. Phys. \textbf{346} (2016), no.~2, 741--779.
  \MR{3535900}

\bibitem[GRAS17]{GRS:cocycles_CGM}
\bysame, \emph{Stationary cocycles and {B}usemann functions for the corner
  growth model}, Probab. Theory Related Fields \textbf{169} (2017), no.~1-2,
  177--222. \MR{3704768}

\bibitem[Joh98]{johansson:generalized_perm}
Kurt Johansson, \emph{The longest increasing subsequence in a random
  permutation and a unitary random matrix model}, Math. Res. Lett. \textbf{5}
  (1998), no.~1-2, 63--82. \MR{1618351 (99e:60033)}

\bibitem[Joh10]{johansson:multi_dim_MC}
\bysame, \emph{A multi-dimensional {M}arkov chain and the {M}eixner ensemble},
  Ark. Mat. \textbf{48} (2010), no.~1, 79--95. \MR{2594587}

\bibitem[OY01]{OCY:burke}
Neil O'Connell and Marc Yor, \emph{Brownian analogues of {B}urke's theorem},
  Stochastic Process. Appl. \textbf{96} (2001), no.~2, 285--304. \MR{1865759}

\bibitem[PS08]{PS:exact_solution_bernoulli}
V.B. Priezzhev and G.M Sch\"{u}tz, \emph{Exact solution of the {B}ernoulli
  matching model of sequence alignment}, Journal of Statistical Mechanics:
  Theory and Experiment \textbf{2008} (2008), no.~09, P09007.

\bibitem[Rom15]{Romik:book}
Dan Romik, \emph{The surprising mathematics of longest increasing
  subsequences}, Institute of Mathematical Statistics Textbooks, vol.~4,
  Cambridge University Press, New York, 2015. \MR{3468738}

\bibitem[Ros81]{rost:particle_process}
H.~Rost, \emph{Nonequilibrium behaviour of a many particle process: density
  profile and local equilibria}, Z. Wahrsch. Verw. Gebiete \textbf{58} (1981),
  no.~1, 41--53. \MR{635270 (83a:60176)}

\bibitem[RS15]{RomSni:jdt}
Dan Romik and Piotr \'{S}niady, \emph{Jeu de taquin dynamics on infinite
  {Y}oung tableaux and second class particles}, Ann. Probab. \textbf{43}
  (2015), no.~2, 682--737. \MR{3306003}

\bibitem[Sep97]{Timo:increasing_points}
Timo Sepp\"{a}l\"{a}inen, \emph{Increasing sequences of independent points on
  the planar lattice}, Ann. Appl. Probab. \textbf{7} (1997), no.~4, 886--898.
  \MR{1484789}

\bibitem[Sep09]{timo:CGM_notes}
Timo Seppalainen, \emph{{Unpublished Lecture Notes on the Corner Growth
  Model}}, 2009, URL:
  \url{https://www.math.wisc.edu/~seppalai/cornergrowth-book/ajo.pdf}. Last
  visited on 2019/05/20.

\bibitem[Sep12]{Timo:polymer}
Timo Sepp\"{a}l\"{a}inen, \emph{Scaling for a one-dimensional directed polymer
  with boundary conditions}, Ann. Probab. \textbf{40} (2012), no.~1, 19--73.
  \MR{2917766}

\bibitem[Sta86]{stanley:two_poset_polytopes}
Richard~P. Stanley, \emph{Two poset polytopes}, Discrete Comput. Geom.
  \textbf{1} (1986), no.~1, 9--23. \MR{824105}

\bibitem[Sta99]{stanley:enum_combin_ii}
\bysame, \emph{Enumerative combinatorics. {V}ol. 2}, Cambridge Studies in
  Advanced Mathematics, vol.~62, Cambridge University Press, Cambridge, 1999,
  With a foreword by Gian-Carlo Rota and appendix 1 by Sergey Fomin.
  \MR{1676282}

\end{thebibliography}

\end{document}